\pdfoutput=1
\documentclass{jnsao}
\usepackage[utf8]{inputenc}
\usepackage[finnish,english]{babel}
\usepackage[inline]{enumitem}
\usepackage[nameinlink,capitalise]{cleveref}
\usepackage{mdframed}
\usepackage{subfiles}
\usepackage{tikz}
\usepackage{pgfplots}
\usetikzlibrary{colorbrewer}
\usepgfplotslibrary{groupplots}

\mdfdefinestyle{examplebg}{
    backgroundcolor=structure!7,%
    hidealllines=true,%
    innertopmargin=-.3em,%
    innerbottommargin=.7em,%
    innerleftmargin=.7em,%
    innerrightmargin=.7em,%
}

\surroundwithmdframed[style=examplebg]{example}
\surroundwithmdframed[style=examplebg]{algorithm}

\theoremstyle{definition}
\newtheorem{assumption}[definition]{Assumption}
\counterwithin{assumption}{section}

\theoremstyle{theorem}
\newtheorem{claim}[theorem]{Claim}

\crefname{assumption}{Assumption}{Assumptions}

\manuscriptcopyright{}
\manuscriptlicense{}
\manuscripteprint{2412.01481}
\manuscripteprinttype{arxiv}

\title{Differential estimates for fast first-order multilevel nonconvex optimisation}
\shorttitle{Differential estimates for multilevel optimisation}

\author{%
    Neil Dizon\thanks{School of Mathematics and Statistics, University of New South Wales, Sydney, Australia. \mbox{\email{n.dizon@unsw.edu.au}}, \orcid{0000-0001-8664-2255}}
    \and
    Tuomo Valkonen\thanks{MODEMAT Research Center in Mathematical Modeling and Optimization, Quito, Ecuador \emph{and} Department of Mathematics and Statistics, University of Helsinki, Finland. \email{tuomo.valkonen@iki.fi}, \orcid{0000-0001-6683-3572}}
}

\date{2024-12-02; revised 2026-03-31}

\acknowledgements{%
    This research has been supported by the Academy of Finland grants 314701 and 345486.
}

\newcommand{\term}{\emph}

\newcommand{\field}[1]{\mathbb{#1}}
\newcommand{\N}{\mathbb{N}}

\newcommand{\R}{\field{R}}
\newcommand{\extR}{\overline \R}

\newcommand{\norm}[1]{\|#1\|}

\newcommand{\adaptabs}[1]{\left|#1\right|}

\newcommand{\inv}[1]{#1^{-1}}

\newcommand{\grad}{\nabla}
\newcommand{\freevar}{\,\boldsymbol\cdot\,}
\newcommand{\Union}\bigcup
\newcommand{\Isect}\bigcap
\newcommand{\union}\cup
\newcommand{\isect}\cap
\newcommand{\bigunion}\bigcup
\newcommand{\bigisect}\bigcap

\newcommand{\defeq}{:=}

\newcommand{\subdiff}{\partial}

\DeclareRobustCommand{\downto}{{{\mathchoice%
            {\rotatebox[origin=c]{-20}{$\to$}}
            {\rotatebox[origin=c]{-20}{$\to$}}
            {\rotatebox[origin=c]{-20}{\scalebox{0.75}{$\to$}}}
            {\rotatebox[origin=c]{-20}{\scalebox{0.6}{$\to$}}}
}}}

\DeclareMathOperator*{\argmin}{arg\,min}

\DeclareMathOperator{\interior}{int}

\DeclareMathOperator{\Dom}{dom}

\DeclareMathOperator{\sublev}{sub}

\DeclareMathOperator{\diam}{diam}

\DeclareMathOperator{\tr}{tr}
\DeclareMathOperator{\diag}{diag}

\DeclareMathOperator{\divergence}{div}

\DeclareMathOperator{\prox}{prox}

\DeclareMathOperator{\TV}{TV}
\DeclareMathOperator{\range}{ran}

\newcommand{\iprod}[2]{\langle #1,#2\rangle}
\newcommand{\adaptiprod}[2]{\left\langle #1,#2\right\rangle}

\newcommand{\weakto}{\mathrel{\rightharpoonup}}
\def \weaktostarSym{\setbox0=\hbox{$\rightharpoonup$}\rlap{\hbox
        to\wd0{\hss\raise1ex\hbox{$\scriptscriptstyle{*\,}$}\hss}}\box0}
    \def \weaktostar    {\mathrel{\weaktostarSym}}

\def\linear{\mathbb{L}}

\def\extR{\overline \R}

\def\ccc{\,d}

\def\dualprod#1#2{\langle #1|#2\rangle}

\def\infconv{\mathop{\Box}}

\let\phi=\varphi
\let\epsilon=\varepsilon

\DeclareMathOperator{\Id}{Id}

\def\BVspace{\mathop{\mathrm{BV}}}

\def\nexxt#1{#1^{k+1}}
\def\this#1{#1^k}
\def\prev#1{#1^{k-1}}
\let\opt\hat

\def\estgrad#1{\widetilde{\grad #1}}
\def\estdiff#1{\widetilde{#1'}}

\def\nextx{\nexxt x}
\def\thisx{\this x}
\def\prevx{\prev x}
\def\optx{\opt x}

\def\nexty{\nexxt y}
\def\thisy{\this y}

\def\nextu{\nexxt u}
\def\thisu{\this u}
\def\optu{\opt u}
\let\base\bar
\def\basex{\base x}
\def\basey{\base y}
\def\basez{\base z}
\def\nextz{\nexxt z}
\def\thisz{\this z}

\def\totalest#1{\tilde\partial_{#1}}

\def\nextestdiff{\estdiff F(\thisx)}
\def\nextestgrad{\estgrad F(\thisx)}

\def\diffwrt#1#2{#1^{(#2)}}

\def\d{\,\mathrm{d}}
\def\gap{\mathcal{G}}

\def\errDesc#1{\varepsilon_{\mathrm{desc},#1}}
\def\errLip#1{e_{\mathrm{lip},#1}}
\def\errMono#1{\varepsilon_{#1}}
\def\errDescBar#1{\varepsilon_{#1}}
\def\errDescBarEmph#1{\varepsilon_{\mathrm{desc},#1}}
\def\err#1{\varepsilon_{#1}}
\def\rDesc{r_{\mathrm{desc}}}

\def \approxinSym{\setbox0=\hbox{$\in$}\rlap{\hbox
        to\wd0{\hss\raise1ex\hbox{$\sim$}\hss}}\box0}
\def\approxin{\mathrel{\approxinSym}}

\def\openball{\mathbb{O}}

\def\bregmann{B}
\def\localset{\Omega_X}
\def\zlocalset{\Omega_Z}
\def\ulocalset{\Omega_U}
\def\basexset{\Omega_{\basex}}
\def\lipSu{\pi_u}
\def\lipSw{\pi_w}

\usepackage{ifthen}

\newcommand{\dis}[4][]{%
    d_{#2}%
    \ifthenelse{\equal{#1}{2}}{^2}{}%
    (#3, #4)%
}

\newcommand{\adaptdis}[4][]{%
    d_{#2}%
    \ifthenelse{\equal{#1}{2}}{^2}{}%
    \left(#3, #4\right)%
}

\def\primaldifffact{\mu_u}

\def\trackingres{\psi}
\newcommand{\trackingressum}[1][p]{\varsigma_{#1}}

\def\nocolor#1{}

\def\distU#1{{\nocolor{SteelBlue!70}d^u_{#1}}}
\def\distUsq#1{{\nocolor{SteelBlue!70}(d^u_{#1})^2}}
\newcommand{\nextDistU}[1][k]{\distU{#1+1}}
\newcommand{\thisDistU}[1][k]{\distU{#1}}
\def\initDistU{\distU{1}}
\def\initDistUsq{\distUsq{1}}
\def\dU{{\nocolor{SteelBlue!70}d_U}}
\def\aaa#1{\distU{#1}}
\def\InstnextDistU{{\nocolor{SteelBlue!70}d_U(\nextu, S_u(\thisx))}}
\def\InstthisDistU{{\nocolor{SteelBlue!70}d_U(\thisu, S_u(\prev x))}}
\def\InstinitDistU{{\nocolor{SteelBlue!70}d_U(u^1, S_u(x^0))}}
\def\InstinitDistUsq{{\nocolor{SteelBlue!70}d_U^2(u^1, S_u(x^0))}}

\def\distW#1{{\nocolor{ForestGreen!70}d^w_{#1}}}
\def\distWsq#1{{\nocolor{ForestGreen!70}(d^w_{#1})^2}}
\newcommand{\nextDistW}[1][k]{\distW{#1+1}}
\newcommand{\thisDistW}[1][k]{\distW{#1}}
\def\initDistW{\distW{1}}
\def\initDistWsq{\distWsq{1}}
\def\dW{{\nocolor{ForestGreen!70}d_W}}
\def\bbb#1{\distW{#1}}
\def\InstnextDistW{{\nocolor{ForestGreen!70}d_W(\nexxt w, S_w(\thisx))}}
\def\InstthisDistW{{\nocolor{ForestGreen!70}d_W(\this w, S_w(\prev x))}}
\def\InstinitDistW{{\nocolor{ForestGreen!70}d_W(w^1, S_w(x^0))}}
\def\InstinitDistWsq{{\nocolor{ForestGreen!70}d_W^2(w^1, S_w(x^0))}}

\newcommand{\distX}[1]{{\nocolor{orange!70}\varrho_{#1}}}
\newcommand{\thisDistXprev}[1][k]{\distX{#1}}
\newcommand{\nextDistXthis}[1][k]{\distX{#1+1}}
\newcommand{\nextDistXthisSq}[1][k]{{\nocolor{orange!70}\varrho_{#1+1}^2}}
\newcommand{\InstthisDistXprev}[1][k]{{\nocolor{orange!70}\norm{x^{#1} - x^{#1-1}}_\circ}}
\newcommand{\InstnextDistXthis}[1][k]{{\nocolor{orange!70}\norm{x^{#1+1} - x^{#1}}_\circ}}
\newcommand{\InstnextDistXthisSq}[1][k]{{\nocolor{orange!70}\norm{x^{#1+1} - x^{#1}}_\circ^2}}

\def\bX{{\nocolor{orange!70}\norm{\freevar}_\circ}}
\def\ccc#1{{\nocolor{orange!70}\varrho_{#1}}}

\def\scalarTrackingError#1{{\nocolor{magenta!70}\widetilde d_{#1}}}
\def\scalarTrackingErrorSq#1{{\nocolor{magenta!70}\widetilde d_{#1}^2}}
\def\scalarTrackingErrorThis{\scalarTrackingError{k}}
\def\scalarTrackingErrorThisSq{\scalarTrackingErrorSq{k}}
\def\trackingErrorThis{{\nocolor{magenta!70}\norm{\nextestdiff-F'(\thisx)}_*}}
\def\trackingErrorThisSq{{\nocolor{magenta!70}\norm{\nextestdiff-F'(\thisx)}_*^2}}
\def\trackingErrorSq#1#2{{\nocolor{magenta!70}\norm{#1-#2}_*^2}}
\def\trackingError#1#2{{\nocolor{magenta!70}\norm{#1-#2}_*}}
\def\dXstar{{\nocolor{magenta!70}\norm{\freevar}_*}}

\def\normalDistXsq#1#2{{\nocolor{red!70}\norm{#1-#2}_\circ^2}}
\def\normalDistX#1#2{{\nocolor{red!70}\norm{#1-#2}_\circ}}
\def\demoDistXsq#1#2{{\nocolor{red!70}\norm{#1-#2}_X^2}}

\def\dX{{\nocolor{red!70}\norm{\freevar}_\circ}}

\def\MF{\bar M}
\def\GF{\bar\Gamma_F}
\def\GG{\bar\Gamma_G}
\def\LL{\bar\Lambda}

\newcommand{\fakenormsq}[2]{\llbracket #1 \rrbracket_{#2}^2}
\newcommand{\normsq}[2]{\| #1 \|_{#2}^2}
\def\Inj{\mathscr{I}}

\def\hstar{\star}

\def\ignorelegendentry#1{}
\pgfplotsset{
    ignore legend/.style={
        every axis legend/.code={\let\addlegendentry\ignorelegendentry}
    },
}

\begin{document}

\maketitle

\begin{abstract}
    With a view on bilevel and PDE-constrained optimisation, we develop iterative estimates $\estdiff F(\thisx)$ of $F'(\thisx)$ for composite functions $F \defeq J \circ S$, where $S$ is the solution mapping of the inner optimisation problem or PDE, the latter seen in this work as a special case of the former.
    The idea is to form a single-loop method by interweaving updates of the iterate $\thisx$ by an outer optimisation method, with updates of the estimate by single steps of standard optimisation methods and linear system solvers.
    When the inner methods satisfy simple tracking inequalities, the differential estimates can almost directly be employed in standard convergence proofs for general forward-backward type methods.
    We adapt those proofs to a general inexact setting in normed spaces, that, besides our differential estimates, also covers mismatched adjoints and unreachable optimality conditions in measure spaces.
    As a side product of these efforts, we provide improved convergence results for nonconvex Primal-Dual Proximal Splitting (PDPS).
    We numerically evaluate our methods on Electrical Impedance Tomography (EIT) and minimal surface control.
\end{abstract}

\section{Introduction}
\label{sec:intro}

First-order methods are slow.
To be precise, they require a high number of iterations, but if those iterations are fast, they have the chance to practically overpower second-order methods with expensive iterations.
In bilevel or PDE-constrained optimisation---the latter seen in this work as a special case of the former---the steps of basic first-order methods are very expensive, involving the solution of the inner problem or PDE and its adjoint.
To make first-order methods fast, it is, therefore, imperative to reduce the cost of solving these subproblems---for instance, by employing inexact solution schemes.

Consequently, especially in the machine learning community, an interest has surfaced in \term{single-loop} methods for bilevel optimisation; see \cite{suonpera2022bilevel} and references therein. Many of these methods are very specific constructions. In \cite{jensen2022nonsmooth} we started work on a more general approach to PDE-constrained optimisation: we showed that on each step of an outer primal-dual optimisation method, we can take \term{single steps} of standard linear system splitting schemes for the PDE constraint and its adjoint, and still obtain a convergent method that is computationally significantly faster than solving the PDEs exactly.
The adjoint equation is used to compute differentials of the PDE or inner problem solutions with respect to its parameters.
In \cite{suonpera2024general} we then presented an approach to bilevel optimisation that allowed general inner and adjoint algorithms that satisfy certain \term{tracking inequalities}.
These were proved for standard splitting schemes for the adjoint equation, and for forward-backward splitting and the Primal-Dual Proximal Splitting (PDPS) of \cite{chambolle2010first} for the inner problem.
The overall analysis was still tied to bilevel optimisation in Hilbert spaces, with forward-backward splitting as the outer optimisation method.

\emph{Our goal, in this work, is to develop a general theory of optimisation methods for bilevel and, by extension, multilevel problems.}
This theory is based on the idea of algorithmic single-loop differential estimation, with a \emph{flexible choice of outer, inner, and adjoint algorithms}. The inner and adjoint methods only need to satisfy the aforementioned \emph{tracking inequalities}. The outer method only has to tolerate \emph{controlled inexactness} of differentials involving the inner solution mapping.

\paragraph{Problem setup}

Write
\[
    F=J \circ S_u
\]
for a mapping $S_u: X \to U$ and a differentiable function $J: U \to \R$, on normed spaces $X$ and $U$.
Typically, but not necessarily, $S_u(x)$ is an \emph{inner solution mapping} that arises from the satisfaction of
\begin{equation}
    \label{eq:intro:tsu}
    0 = T(S_u(x), x)
    \quad\text{for a}\quad
    T: U \times X \to W_*
    \quad\text{with}\quad W_* \text{ a normed space}.
\end{equation}
The idea is that $T$ encodes the optimality conditions of an inner problem, parametrised by $x$, or a PDE, likewise parametrised by $x$.
We are then interested in the solution of composite optimisation problems of the form
\begin{equation}
    \label{eq:intro:simple-problem}
    \min_{x \in X}~ F(x) + G(x),
\end{equation}
or, more generally, the solution of optimality conditions
\begin{equation}
    \label{eq:intro:oc}
    0 \in F'(x) + \subdiff G(x) + \Xi x,
\end{equation}
for $G$ convex but possibly nonsmooth, and $\Xi \in \linear(X; X^*)$ skew-adjoint.
If $\Xi=0$, then this optimality condition is typically necessary for  \eqref{eq:intro:simple-problem}.
More generally, the operator allows the modelling of primal-dual problems, and treating the PDPS and Douglas–Rachford splitting as generalised forward-backward splitting methods \cite{clason2020introduction,tuomov-proxtest}.
We will discuss such formulations in more detail in \cref{sec:fb}.

\paragraph{Contributions}

Our contributions are as follows.
In \cref{sec:tracking,sec:inner-adjoint}, which form our \emph{inner theory},
\begin{enumerate}[label=(\alph*)]
    \item we show in \emph{general normed spaces} that we can approximate in a single-loop fashion the differentials of compositions $F = J \circ S_u$, given abstract inner and adjoint algorithms for $S_u$, satisfying certain \term{tracking inequalities}.
    We then illustrate how standard algorithms satisfy these inequalities.
\end{enumerate}
The corresponding results on sequences of scalars, on which these results are based on, are relegated to \cref{sec:scalar-tracking}.
To work in normed spaces, we use distances defined by semi-norms that satisfy the Pythagoras' identity, and corresponding support functions. We introduce these in \cref{sec:semicon}.

In contrast to \cite{suonpera2024general} and, indeed, all single-loop bilevel optimisation methods that we are aware of, our approach can also work with the adjoint dimension reduction trick typically employed in PDE-constrained optimisation.
We show that, subject to additive error terms with a bounded sum, the differential estimates $\estdiff F(\thisx)$ satisfy standard smoothness properties, such as Lipschitz differential and the two- and three-point descent inequalities \cite{tuomov-proxtest,clason2020introduction}.
Based on this, in \cref{sec:fb}, which forms our \emph{outer theory},
\begin{enumerate}[resume*]
    \item we prove various forms of convergence of general inexact splitting methods for \eqref{eq:intro:oc}.
\end{enumerate}

To facilitate the analysis of outer primal-dual methods, and even the basic forward-backward splitting when growth properties have to be combined from multiple sources, we introduce in \cref
{sec:operator-reg} operator-relative variants of the descent inequality.
We interpret the conditions of \cref{sec:fb} for outer forward-backward splitting and outer PDPS in \cref{sec:outer-examples}.

Not content to merely adapt existing proofs to inexact steps and normed spaces,
we also present some improvements to the nonconvex PDPS of \cite{tuomov-nlpdhgm}
(see also the review \cite{tuomov-firstorder}).
Treating a slightly simplified problem,
\begin{enumerate}[resume*]
    \item we show that, for the nonconvex PDPS, the values of the convex envelope of the objective function at ergodic iterates locally converge to a minimum.
    Through a more refined analysis, we are also able to avoid the requirement of dual strong monotonicity (e.g., Moreau–Yosida regularisation of total variation) of previous works \cite{jensen2022nonsmooth,tuomov-firstorder}.
\end{enumerate}

We finish with numerical illustrations and new application examples (electrical impedance tomography and minimal surface control) in \cref{sec:numerical}, confirming and further improving upon the related results in \cite{jensen2022nonsmooth,suonpera2024general}. Through our work, the specific algorithms presented in those works can be understood through a clean and generic differential estimation approach.

Some readers may wish to use \cref{sec:numerical} as a guide to this work, skipping to it after this introduction, and reading other parts as needed.

\paragraph{Examples}

The following examples illustrate the kind of algorithms that can be constructed, and whose convergence can be proved using our general theory. Our overall theory is, however, much more general than these two specific algorithms, as it allows the overall method to be composed of arbitrary inner, adjoint, and outer algorithms. Instead of gradients in Hilbert spaces, we will, moreover, work with differentials in general normed spaces.

\begin{example}[Forward-backward–linear system splitting–forward-backward]%
    \label{ex:intro:bilevel}
    Consider the bilevel optimisation problem
    \[
        \min_{x, u} J(u) + G(x) \quad\text{subject to}\quad u \in \argmin_x f(u; x) + g(u; x).
    \]
    Assume for simplicity that $f$ and $g$ are twice differentiable, convex in the first variable, and all the spaces are Hilbert. Then we can rewrite the problem as \cref{eq:intro:simple-problem,eq:intro:tsu} by setting
    \[
        T(u, x)=\grad_u f(u; x) + \grad_u g(u; x).
    \]
    A standard forward backward method with step length parameter $\tau>0$ would update
    \[
        \nextx \defeq \prox_{\tau G}(\thisx - \grad F(\thisx)),
    \]
    however the computation of $\grad F(x) = \grad S_u(x)^*\grad J(S_u(x))$ is generally expensive.
    We therefore propose to form the iteratively updated single-loop differential estimate $\nextestgrad$, as in the overall algorithm
    \begin{equation}
        \label{eq:intro:method}
        \left\{
        \begin{aligned}
        \nextu & \defeq \prox_{\sigma \grad g(\freevar; \thisx)}(\thisu - \sigma \grad f(\thisu; \thisx)),
        & \text{(inner step)}
        \\
        \nexxt w & \defeq - \inv N(M\this w + \grad J(\nextu))
            \quad\text{where}\quad N+M=\grad_u T(\nextu, \thisx),
        & \text{(adjoint step)}
        \\
        \nextestgrad & \defeq \grad_x T(\nextu, \thisx)\nexxt w,
        & \text{(diff.~transform)}
        \\
        \nextx & \defeq \prox_{\tau G}(\thisx - \nextestgrad).
        & \text{(outer step)}
        \end{aligned}
        \right.
    \end{equation}
    The first line is a standard forward-backward step for the inner variable.
    The second line applies, e.g., Gauss–Seidel or block-Gauss–Seidel splitting (depending on the choice of the operators $N$ and $M$) to the linear equation
    \begin{equation*}
        \grad_u T(\nextu, \thisx)\nexxt w + \grad J(\nextu) = 0.
    \end{equation*}
    Of course, the choice $M=0$ and $N = \grad_u T(\nextu, \thisx)$ would solve this equation exactly.
    The third line of \eqref{eq:intro:method} transforms the inner and adjoint iterates $\nextu$ and $\nexxt w$ into the differential estimate $\nextestgrad$.
    As we later elaborate in  \cref{ex:tracking:pde-adjoint}, this construction is motivated by the reduced adjoint equation
    \[
        \grad_u T(S_u(x), x)w_x + \grad J(S_u(x)) = 0
        \quad \text{whose solution satisfies}\quad
        \grad F(x) = \grad_x T(S_u(x), x)w_x.
    \]
    We treat specific inner and adjoint methods and differential transformations in \cref{sec:inner-adjoint} after the general theory in \cref{sec:tracking}.
    The fourth line of  \eqref{eq:intro:method} is an outer inexact forward-backward step.
    We present a general theory of outer/inexact methods in \cref{sec:fb}, and provide examples in \cref{sec:outer-examples}.
\end{example}

\begin{example}[A simple PDE-constrained optimisation problem]
    \label{ex:intro:pde}
    With $U \subset H^1(\Omega)$ and $X \subset L^2(\Omega)$, define for admissible $x \in X$ the linear operator $A_x \in \linear(U; U^*)$ by $A_x u \defeq \grad^* [x \cdot \grad u]$, and consider the PDE-constrained optimisation problem
    \[
        \min_{x, u} J(u) + G(x) \quad\text{subject to}\quad A_x u = b,
    \]
    where we use $G$ to restrict $x$ to a domain where the PDE is well-posed.
    Setting $T(u, x) = A_x u - b$, we are again in the setup of \cref{eq:intro:simple-problem,eq:intro:tsu}.
    Observing that $\grad_u T(\nextu, \thisx) = A_x$ and $\grad_x T(\nextu, \thisx)\nexxt w = \grad\nextu \cdot \grad\nexxt w$, similarly to \cref{ex:intro:bilevel}, but this time also applying a linear system splitting scheme to the PDE itself, we arrive at the algorithm
    \[
        \left\{
        \begin{aligned}
        \text{split } & N + M \defeq A_{\thisx},
        & \text{(operator splitting)}
        \\
        \nextu & \defeq -\inv N(M\this u - b),
        & \text{(PDE step)}
        \\
        \nexxt w & \defeq - \inv N(M\this w + \grad J(\nextu)),
        & \text{(adjoint step)}
        \\
        \estgrad F(\thisx) & \defeq \grad\nextu \cdot \grad\nexxt w,
        & \text{(diff.~transform)}
        \\
        \nextx & \defeq \prox_{\tau G}(\thisx - \estgrad F(\thisx)).
        & \text{(outer step)}
        \end{aligned}
        \right.
    \]
    To use standard schemes such as Gauss–Seidel, we need to work in finite-dimensional subspaces. However, in infinite dimensions, the abstract splitting $A_x=N+M$ can applied on the block structure of $A_x$ in more complex problems.
\end{example}

\paragraph{Further related work}

Through our general approach to inexactness in \cref{sec:fb}, independent of the differential estimation theory of \cref{sec:tracking}, besides differential estimates for multilevel problems, we can model mismatched adjoints \cite{lorenz2023mismatch}, and difficult-to-solve-exactly optimality conditions in measure spaces \cite{tuomov-pointsource}. We also adopt the approach of \cite{tuomov-pointsource} to optimisation in normed spaces: instead of Bregman divergences, we construct an inner product structure with a self-adjoint $M \in \linear(X; X^*)$.
Our work is related to the study of gradient oracles for smooth convex optimisation in \cite{devolder2013firstorder}, and for nonconvex composite optimisation in \cite{dvurechensky2022gradient,nabou2024proximal}, both in finite-dimensional Euclidean spaces.
Based on sufficient descent and the Kurdyka–Łojasiewicz property, \cite{ochs2019unifying} also study inexact methods in $\R^n$.
Moreover, \cite{baraldi2022proximal} introduce approaches to control model inexactness in proximal trust region methods, and \cite{salehi2024adaptivelyinexactfirstordermethod} in non-single-loop gradient methods for bilevel optimisation.

\paragraph{Notation and basic concepts}

We write $\linear(X; Y)$ for the space of bounded linear operators between the normed spaces $X$ and $Y$, and $\Id$ for the identity operator.
$X^*$ stands for the dual space of $X$.
When $X$ is Hilbert, we identify $X^*$ with $X$.
We write $\iprod{x}{y}$ for an inner product, $\dualprod{x^*}{x}_{X^*,X}$ for a dual product.
The open ball in the standard norm of $X$  is denoted by $\openball(x,r)$.
We also write $M \ge N$ if $M-N$ is positive semi-definite.
We extensively use the vectorial Young's inequality
\[
    \dualprod{x^*}{x}_{X^*,X} \leq \frac{a}{2}\norm{x}_X^2 + \frac{1}{2a}\norm{x^*}_{X^*}^2 \qquad \text{for all } x\in X,\, x^* \in X^*\, a > 0.
\]
For sets $A$ and $B$, we often write $\dualprod{A-B}{x} \ge 0$, which means that $\dualprod{y-z}{x} \ge 0$ for all $y \in A$ and $z \in B$.

For $F: X \to \R$, we write $DF(x)$ for the Gâteaux and $F'(x) \in X^*$ for the Fr\'{e}chet derivative at $x$, if they exist. If $X$ is Hilbert, $\grad F(x) \in X$ stands for the Riesz representation of $F'(x)$, i.e., the gradient.
For partial derivatives, we use the notation $\diffwrt{F}{x}(u, x)$.
We also write $\sublev_c F \defeq \{ x \in X \mid F(x) \le c\}$ for the $c$-sublevel set.
With $\extR := [-\infty, \infty]$, for a convex $G: X \to \extR$, we write $\Dom G$ for the effective domain, $\subdiff G(x)$ for the subdifferential at $x$, and $G^*:X^*\to \extR$ for the Fenchel conjugate.
We write $F^*$ for the Fenchel conjugate of $F$.
When $X$ is a Hilbert space, we write $\prox_F$ for the proximal map and, with a slight abuse of notation, identify $\subdiff G(x)$ with the set of Riesz representations of its elements.

\section{Support functions of semi-norm balls}
\label{sec:semicon}

Let $X$ be a normed space.
We call $M \in \linear(X; X^*)$ \term{self-adjoint} if the restriction $M^*|X=M$, and \term{positive semi-definite} if $\dualprod{Mx}{x}_{X^*,X}\ge 0$ for all $x \in X$.
If both hold, $\norm{x}_{M} \defeq \sqrt{\dualprod{Mx}{x}}$ defines a semi-norm, which \emph{satisfies the Pythagoras' identity}
\begin{equation}
    \label{eq:norms:pythagoras}
    \dualprod{M(x-\tilde x)}{x-\bar x}_{X^*,X} = \frac{1}{2}\norm{x-\tilde x}_M^2 + \frac{1}{2}\norm{x-\bar x}_M^2 - \frac{1}{2}\norm{\tilde x - \bar x}_M^2
\end{equation}
familiar from Hilbert spaces.
We write $\openball_M(x,r)$ for the radius-$r$ open ball at $x$ in this semi-norm.

We will often equip the space $X$ of the outer iterate with such a semi-norm.
For example, $M$ may arise as the injection $H_0^1(\Omega) \hookrightarrow H^{-1}(\Omega)$, $x \mapsto \iprod{x}{\freevar}_{L^2(\Omega)}$.
As another example, in \cite{tuomov-pointsource}, a convolution construction is employed in spaces of measures.
Further examples arise from primal-dual methods, and the applications of \cref{sec:numerical}.
The key idea of using such operators in \cref{sec:fb} will, indeed, be to define the steps of outer algorithms in non-Hilbert spaces, while also encoding variable decoupling in primal-dual methods.

To motivate the construction ahead of time, the $M$-proximal map of $G: X \to \extR$ may be defined as
$
    \prox_G^M(x) = \argmin_{\tilde x \in X} G(\tilde x) + \frac{1}{2}\norm{\tilde x-x}_M^2,
$
For convex, proper, and lower semicontinous $G$, this is characterised by the Fermat principle $0 \in \subdiff G(\tilde x) + M(\tilde x - x)$ in $X^*$.
When $M$ is the trivial injection $H_0^1(\Omega) \hookrightarrow H^{-1}(\Omega)$ from above, the idea then is that we may have to work in $H_0^1(\Omega)$ for reasons of regularity, but want to perform $L^2$ steps for reasons efficiency or simplicity. This is achieved by the choice of $M$---subject to the proximal map nevertheless being solvable with $\tilde x \in H_0^1(\Omega)$.

Due to the various properties of the next lemma---many shared by arbitrary semi-norms---we will then want to measure distances in $X^*$ through the support function of the $M$-unit ball,
\[
    \norm{x^*}_{M,*} \defeq \sup\{ \dualprod{x^*}{x}_{X^*,X} \mid  x \in X,\, \norm{x}_M \le 1 \}.
\]
In particular, we will measure distances of differentials $F'(x) \in X^*$ through this construct.
Its possible infinite-valuedness will impose additional regularity on the differentials.

 \begin{lemma}
    \label{lemma:semicon:props}
    For any semi-norm $\norm{\freevar}_{\circ}$ on $X$, and $\norm{x^*}_{*} \defeq \sup_{ \norm{x}_{\circ} \le 1 } \dualprod{x^*}{x}_{X^*,X}$ on $X^*$, we have:
    \begin{enumerate}[label=(\roman*)]
        \item\label{item:semicon:props:triangle}
        $\norm{\freevar}_*: X^* \to \extR$ is non-negative, positively homogeneous, and satisfies the triangle inequality.
        \item\label{item:semicon:props:conjugacy}
        $\norm{x^*}_*  = [2(\tfrac{1}{2}\norm{\freevar}_\circ^2)^*(x^*)]^{1/2}$,
        i.e.,  $\frac{1}{2}\norm{x^*}_*^2 = (\tfrac{1}{2}\norm{\freevar}_\circ^2)^*(x^*)$ for all $x^* \in X^*$.
        \item\label{item:semicon:props:young}
        $\dualprod{x^*}{x}_{X^*,X} \le \frac{1}{2}\norm{x^*}_*^2 + \frac{1}{2}\norm{x}_\circ^2$ for all $x \in X$ and $x^* \in X^*$.
    \end{enumerate}
    For a self-adjoint positive semi-definite $M \in \linear(X; X^*)$, also
    \begin{enumerate}[resume*]
        \item\label{item:semicon:props:bound}
        $\norm{x^*}_{X^*} \le \norm{M}_{\linear(X;X^*)}^{1/2} \norm{x^*}_{M,*}$
        for all $x^*\in X^*$.
        \item\label{item:semicon:props:range}
        $\norm{x^*}_{M,*} = \norm{x}_M$ for all $x^* = M x \in \range M$.
    \end{enumerate}
    Moreover, letting $A \in \linear(X; Y)$ for a normed space $Y$, and defining $\norm{A}_{\circ,Y} \defeq \sup_{\norm{x}_\circ \le 1} \norm{Ax}_Y$,
    \begin{enumerate}[resume*]
        \item\label{item:semicon:props:op}
        $\norm{y^* A}_* = \norm{A^* y^*}_* \le \norm{A}_{\circ,Y}\norm{y^*}_{Y^*}$ for all $y^* \in Y^*$.
    \end{enumerate}
 \end{lemma}

 \begin{proof}
    \cref{item:semicon:props:triangle}:
    This is immediate from the common properties of support functions.

    \cref{item:semicon:props:conjugacy}:
    Consider the problem
    $\sup_{x \in X}~ \dualprod{x^*}{x}_{X^*,X} - \frac{1}{2}\norm{x}_\circ^2$.
    Taking $x=t \bar x$ for $t \in \R$ and $\bar x \ne 0$, and optimising with respect to $t$, we obtain $t=\dualprod{x^*}{\bar x}_{X^*,X} / \norm{\bar x}_\circ^2$. Hence, as required
    \[
        \begin{split}
        \left(\frac{1}{2}\norm{\freevar}_\circ^2\right)^*(x^*)
        &
        = \sup_{x \in X}~ \dualprod{x^*}{x}_{X^*,X} - \frac{1}{2}\norm{x}_\circ^2
        = \sup_{t \in \R, \norm{\bar x}_\circ \le 1}~ t\dualprod{x^*}{\bar x}_{X^*,X} - \frac{t^2}{2}\norm{\bar x}_\circ^2
        = \sup_{\norm{\bar x}_\circ \le 1} \frac{\dualprod{x^*}{\bar x}_{X^*,X}^2}{2\norm{\bar x}_\circ^2}
        \\
        &
        = \frac{1}{2}\left(\sup_{\norm{\bar x}_\circ \le 1} \frac{\dualprod{x^*}{\bar x}_{X^*,X}}{\norm{\bar x}_\circ}\right)^2
        = \frac{1}{2}\left(\sup_{\norm{\bar x}_\circ \le 1} \dualprod{x^*}{x_0}_{X^*,X}\right)^2
        = \frac{1}{2}\norm{x^*}_*^2.
        \end{split}
    \]

    \cref{item:semicon:props:young}:
    Due to \cref{item:semicon:props:conjugacy}, this claim is exactly the Fenchel–Young inequality; see, e.g., \cite[(5.1)]{clason2020introduction}.

    \cref{item:semicon:props:bound}:
    We have $f^* \le g^*$ for $f \ge g$ from the definition of the Fenchel conjugate. Abbreviating $m \defeq \norm{M}_{\linear(X;X^*)}$, moreover
    $
        \norm{x}_X^2 = \sup_{\norm{x^*}_{X^*} \le 1} \norm{x}_X\dualprod{x}{x^*} \ge \dualprod{x}{Mx}/m = \norm{x}_M^2/m.
    $
    (For the inequality, take $x^*=Mx/(m\norm{x}_X)$.)
    Hence, also using the scaling and dual-norm properties \cite[Lemma 5.8 and Lemma 5.5]{clason2020introduction} of Fenchel conjugates, it follows for any $x^* \in X^*$ that
    \[
        (\tfrac{1}{2}\norm{\freevar}_M^2)^*(x^*)
        \ge
        (\tfrac{m}{2}\norm{\freevar}_X^2)^*(x^*)
        =
        m(\tfrac{1}{2}\norm{\freevar}_X^2)^*(x^*/m)
        =
        \frac{m}{2}\norm{x^*/m}_{X^*}^2
        =
        \frac{1}{2m}\norm{x^*}_{X^*}^2.
    \]
    We finish by using \cref{item:semicon:props:conjugacy}.

    \cref{item:semicon:props:range}:
    By the definition of the Fenchel conjugate,
    $
        \left(\frac{1}{2}\norm{\freevar}_M^2\right)^*(x^*) = \sup_{x \in X} \dualprod{x^*}{x}_{X^*,X} - \frac{1}{2}\norm{x}_M^2.
    $
    By the Fermat principle, if $x^* = Mx$, i.e., $x^* \in \range M$, then the supremum is achieved by $x$. In this case, the expression gives $\left(\frac{1}{2}\norm{\freevar}_M^2\right)^*(x^*) = \frac{1}{2}\norm{x}_{M}^2$. Now we apply \cref{item:semicon:props:conjugacy}.

    \cref{item:semicon:props:op}:
    By definition
    $
        \norm{A^*y^*}_*
        = \sup_{\norm{x}_\circ \le 1} \dualprod{y^*}{Ax}_{Y^*,X}
        \le \norm{y^*}_{Y^*}\norm{A}_{\circ,Y}.
    $
    Moreover---as a mere matter of notation---for any $x \in X$, we have $[A^*y^*]x = \dualprod{A^*y^*}{x}_{X^*,X} = \dualprod{y^*}{Ax}_{Y^*,Y}= [y^*A]x$. Thus, $A^*y^*=y^*A$.
\end{proof}

By \cref{item:semicon:props:range}, if $M$ is invertible, so that $\norm{\freevar}_M$ is a norm, then $\norm{x^*}_{M,*}=\norm{x^*}_{\inv M}$. If $M$ is the injection $H^1_0(\Omega) \hookrightarrow H^{-1}(\Omega)$,  $x \mapsto \iprod{x}{\freevar}_{L^2(\Omega)}$, then for $x^* \in L^2(\Omega)$, we have $\norm{x^*}_{M,*}=\norm{x}_M=\norm{x}_{L^2(\Omega)}$.
In this case, given $A \in \linear(H_0^1(\Omega); Y)$, $\norm{A}_{M,Y} = \sup_{x \in H_0^1(\Omega), \norm{x}_{L^2(\Omega)} \le 1} \norm{Ax}_Y$ evaluates the norm of the extension of $A$ into an operator from $L^2(\Omega)$ to $Y$. It may be infinite.

Thus, $\norm{\freevar}_{M,*}$ is also, almost, a semi-norm, but may take the value $+\infty$ outside $\range M$.

\section{Differential estimation from tracking inequalities}
\label{sec:tracking}

Let $J: U \to \R$ and $S_u: X \to U$ be Fréchet differentiable on normed spaces $X$ and $U$.
We consider
\[
    F(x) = J(S_u(x)).
\]
Typically, but not necessarily in the overall theory, $S_u(x)$ arises from the satisfaction of \eqref{eq:intro:tsu}.

As $S_u$ and its differential can be expensive to compute, given an iterate $\thisx$ of an arbitrary \emph{outer algorithm} for minimising an objective that involves $F$, such as \eqref{eq:intro:simple-problem}, we estimate $S_u(\thisx)$ by an \term{inner iterate} $\nexxt u \in U$, and $S_u'(\this x)$ by an \term{adjoint iterate} $\nexxt p \in \linear(X; U)$, that is, we estimate
\[
    F'(\this x) = J'(S_u(\thisx))S_u'(\this x)
    \quad\text{by}\quad
    \nextestdiff
    =
    J'(\nexxt u)\nexxt p.
\]
When $X$ is a Hilbert space, we write $\nextestgrad$ for the Riesz representation of $\nextestdiff$.
We do not provide a single explicit formula for $\nexxt u$ and $\nexxt p$.
Instead, we assume them to satisfy \emph{tracking estimates} as in \cite{jensen2022nonsmooth,suonpera2024general}.
We formulate these tracking estimates---that are essentially contractivity estimates with suitable penalties for parameter change---in \cref{sec:assumptions}.
Our goal is to derive, in \cref{sec:smoothness}, variants of standard descent inequalities and Lipschitz bounds for the estimate $\nextestdiff$.
In \cref{sec:inner-adjoint} we provide examples of \emph{inner and adjoint methods} that satisfy the tracking inequalities.

Although $\nextestdiff$ will have the above structure, we want to avoid constructing $\nexxt p \approx S_u'(\this x) \in \linear(X; U)$ directly due to its high dimensionality. Instead, we seek to only construct the necessary projections through a lower-dimensional variable $\nexxt w$.
We illustrate this idea in the following example.

\begin{example}[Adjoint equations]
    \label{ex:tracking:pde-adjoint}
    Suppose $S_u(x)$ arises from the satisfaction of \eqref{eq:intro:tsu}.
    By implicit differentiation, subject to sufficient differentiability and \eqref{eq:intro:tsu} holding in a neighbourhood of $x$, we obtain the \term{basic adjoint}
    \begin{equation}
        \label{eq:tracking:basic-adjoint}
        \diffwrt{T}{u}(S_u(x), x)S_u'(x) + \diffwrt{T}{x}(S_u(x), x)
        = 0 \in \linear(X; W),
    \end{equation}
    where
    $S_u'(x) \in \linear(X; U)$,
    $\diffwrt{T}{u}(S_u(x), x) \in \linear(U; W)$, and
    $\diffwrt{T}{x}(S_u(x), x) \in \linear(X; W)$.
    Hence, following the derivation of adjoint PDEs in, e.g., \cite[§1.6.2]{hinze2009pde} or \cite[§1.2]{delosreyes2015numerical}, assuming $\diffwrt{T}{u}(S_u(x), x)$ to be invertible, we solve from \eqref{eq:tracking:basic-adjoint} that
    \begin{gather}
         \label{eq:tracking:reduced-adjoint-diff-transformation}
         [J \circ S_u]'(x) = J'(S_u(x))S_u'(x) = S_w(x) \diffwrt{T}{x}(S_u(x), x),
    \intertext{for a $S_w(x) \in W$ satisfying the \term{reduced adjoint}}
        \label{eq:tracking:reduced-adjoint}
        S_w(x)\diffwrt{T}{u}(S_u(x), x) + J'(S_u(x)) = 0.
    \intertext{For $x=\thisx$, we will in practise take $\nexxt w$ as an operator splitting approximation to}
        \label{eq:tracking:reduced-adjoint-update}
        \nexxt w \diffwrt{T}{u}(\nexxt u, \thisx) + J'(\nextu) = 0,
    \shortintertext{and then set}
        \nonumber
        \nextestdiff \defeq \nexxt w \diffwrt{T}{x}(\nexxt u, \thisx) \approx J'(S_u(\thisx))S_u'(\thisx).
    \end{gather}
\end{example}

\subsection{The tracking assumption}
\label{sec:assumptions}

To track the inexact computations of inner and adjoint variables across iterations, we introduce verifiable conditions that quantify how closely the computed values follow the outputs of the exact inner and adjoint solution mappings evaluated at the current outer iterate. These tracking assumptions ensure that the accumulated errors remain controlled and that the approximate gradient remains meaningful for descent. The following assumption formalises this idea.

\begin{assumption}
    \label{ass:tracking:main}
    Let the abstract spaces $U$ and $W$ be equipped with the distance functions $d_U: U \times U \to [0, \infty]$ and $d_W: W \times W \to [0, \infty]$.
    Also let the normed space $X$ be equipped with a semi-norm $\norm{\freevar}_\circ$, and $X^*$ with the support function $\norm{\freevar}_*$ of the corresponding unit ball (see \cref{sec:semicon}).
    Finally, let the subset $\localset \subset X$, the \term{inner solution map} $S_u: X \to U$, and the \term{adjoint solution map} $S_w: X \to W$.
    Then, on an iteration $k \ge 0$, given $\{(x^n, u^n, w^n)\}_{n=0}^k \subset \localset \times U \times W$:
    \begin{enumerate}[label=(\roman*)]
        \item\label{item:tracking:main:inner-tracking}
        An \term{inner algorithm} produces $\nextu \in U$ satisfying for some $\pi_u>0$, $\kappa_u>1$, \emph{when $k \ge 1$}, the inner tracking inequality
        \[
            \kappa_u \InstnextDistU
            \le
            \InstthisDistU
            + \pi_u\InstthisDistXprev.
        \]

        \item\label{item:tracking:main:adjoint-tracking}
        An \term{adjoint algorithm} produces $\nexxt w \in W$ satisfying for some ${\primaldifffact}, \pi_w>0$, $\kappa_w>1$, \emph{when $k \ge 1$}, the adjoint tracking inequality
        \[
            \kappa_w \InstnextDistW
            \le
            \InstthisDistW
            + {\primaldifffact} \InstnextDistU
            + \pi_w\InstthisDistXprev.
        \]

        \item\label{item:tracking:main:differential-transformation}
        A \term{differential transformation} produces $\nextestdiff \in X^*$ that satisfies for some $\alpha_u,\alpha_w \ge 0$ the bound
        \[
            \trackingErrorThis
            \le
            \alpha_u \InstnextDistU
            + \alpha_w \InstnextDistW.
        \]
    \end{enumerate}
\end{assumption}

\begin{remark}[Distance functions]
    The distance functions $\dU$ and $\dW$ will in \cref{sec:fb} be given by norms, but in this section, we make no such requirement.
    They could be distances on a manifold.
    They could be squared norms, as the basic theory was formulated in \cite{jensen2022nonsmooth}.
    We write squared distances as $d_U^2(u, \tilde u) \defeq d_U(u, \tilde u)^2$.
\end{remark}

\begin{remark}[Meaning of the conditions]
    The inner and adjoint tracking conditions \cref{item:tracking:main:inner-tracking,item:tracking:main:adjoint-tracking}, which are not required to hold on the initial iteration $k=0$, are parameter change aware contractivity conditions for the inner and adjoint algorithms:
    if $\thisx=\prev x$, the former reduces to a standard contractivity condition.
    For examples of such algorithms, recall \cref{ex:intro:bilevel,ex:intro:pde}.
    The condition \cref{item:tracking:main:differential-transformation} allows transforming the construction error of $\nextestdiff$ into the tracking errors of the inner and adjoint algorithms.
    We provide more detailed examples of algorithms that satisfy these conditions in \cref{sec:inner-adjoint}.
\end{remark}

\begin{remark}[Initial iterates and first-step errors]
    The initial iterates are $x^0$, $u^0$, and $w^0$, but the tracking inequalities start from $x^0$, $u^1$, and $w^1$: the idea is that each $u^{k+1}$, which is computed using $x^k$, is compared against the exact inner solution $S_u(x^k)$, and likewise for the adjoint variable.
    The \emph{first-step errors} $\InstinitDistU$ and $\InstinitDistW$ will appear in our final estimates.
    They can be made small by solving the first step to a high precision.
    Contractive algorithms guarantee that they are smaller than the initialisation errors $\dU(u^0, S_u(x^0))$ and $\dW(w^0, S_w(x^0))$, indeed, this follows if the inner and adjoint tracking conditions hold also for $k=0$ with $x^{-1}=x^0$.
\end{remark}

\subsection{Smoothness of differential estimates}
\label{sec:smoothness}

We now derive descent- and Lipschitz-type inequalities for the approximate differential ${\estdiff F(\this x)}$, extending these classical smoothness concepts to account for differential errors under the tracking framework.
Most of these result are straightforward consequences of \cref{ass:tracking:main} and the scalar recurrence results of \cref{sec:scalar-tracking}.

We recall that if $F'$ is $L$-Lipschitz, it then satisfies the \emph{descent inequality} \cite[Theorem 7.1]{clason2020introduction}
\begin{equation}
    \label{eq:weaker:descent}
    \dualprod{F'(\this x)}{x-\this x}_{X^*,X}
    \ge
    F(x)-F(\this x)
    - \frac{L}{2}\demoDistXsq{x}{\thisx}
    \quad\text{for all}\quad
    x, \thisx \in X.
\end{equation}
The next theorem establishes a bound on dual pairings of $\estdiff F(\thisx)-F'(\thisx)$.
If, for simplicity, $\dX=\norm{\freevar}_X$, then taking $\bar x=\thisx$ and $\tilde\gamma=\trackingressum$ in the theorem, and combining with the descent inequality \eqref{eq:weaker:descent}, we obtain the \emph{inexact descent inequality} at $x=\nextx$,
\begin{equation}
    \label{eq:weaker:descent:inexact}
    \dualprod{\estdiff F(\this x)}{\nextx-\this x}_{X^*,X}
    \ge
    F(\nextx)-F(\this x)
    - \frac{L + 2\trackingressum}{2}\demoDistXsq{\nextx}{\thisx}
    - \frac{1}{2\trackingressum}e_{p,k}.
\end{equation}
According to the theorem, the final error term has a bounded sum, which we will be able to manage in convergence proofs of optimisation methods (in \cref{sec:fb}).

In such convergence proofs, it is frequently convenient to use the \emph{three-point descent inequality} (see \cite[Corollaries 7.2 and 7.7]{clason2020introduction} for the convex case, or \cite[Appendix B]{tuomov-proxtest} for the non-convex case)
\begin{equation}
    \label{eq:weaker:descent3}
    \dualprod{F'(\this x)}{x-\bar x}_{X^*,X}
    \ge
    F(x)-F(\bar x)
    + \frac{\beta}{2}\demoDistXsq{x}{\bar x}
    - \frac{\lambda}{2}\demoDistXsq{x}{\thisx}
    \quad\text{for all}\quad
    x, \thisx, \bar x \in X,
\end{equation}
where $\beta \in \R$ models second-order growth, and $\lambda \ge 0$ models smoothness.
If, again, $\dX=\norm{\freevar}_X$, then combining the next theorem with this inequality, we obtain the inexact version at $x=\nextx$,
\begin{equation}
    \label{eq:weaker:descent3:inexact}
    \dualprod{\estdiff F(\this x)}{\nextx-\bar x}_{X^*,X}
    \ge
    F(\nextx)-F(\bar x)
    + \frac{\beta-\tilde\gamma}{2}\demoDistXsq{\nextx}{\bar x}
    - \frac{\trackingressum^2\inv{\tilde\gamma} + \lambda}{2}\demoDistXsq{\nextx}{\thisx}
    - \frac{1}{2\tilde\gamma}e_{p,k}.
\end{equation}

We write
\begin{equation}
    \label{eq:tracking:kappa}
    \kappa \defeq \min (\kappa_u, \kappa_w)
    \quad\text{and}\quad
    \overline\kappa \defeq \max(\kappa_u, \kappa_w).
\end{equation}

\begin{theorem}
    \label{thm:weaker:smoothness}
    Suppose \cref{ass:tracking:main} holds up to an iteration $k \in \N$.
    Let $\{x^n\}_{n=0}^k \subset \localset$, and pick  $p \in [1, \kappa)$.
    Then, for any $\tilde\gamma>0$ and $\bar x \in X$, we have
    \[
        \dualprod{\estdiff F(\this x)-F'(\thisx)}{\nextx - \bar x}_{X^*,X}
        \ge
        -\frac{\tilde\gamma}{2} \normalDistXsq{\nextx}{\bar x} - \frac{\trackingressum^2}{2\tilde\gamma}\InstnextDistXthisSq - \frac{1}{2\tilde\gamma}e_{p,k}
    \]
    for some $\trackingressum \ge 0$ and $e_{p,k} \in \R$ that satisfy
    \begin{equation*}
        \sum_{n=0}^k p^n e_{p,n}
        \le
            \Psi_p
            \defeq
            \frac{\InstinitDistUsq}{\pi_u} \bigg(\frac{\trackingressum\alpha_u\kappa}{\kappa-1} + \frac{\trackingressum\alpha_w{\primaldifffact}}{(\kappa-1)^2}\bigg)
            +
            \frac{\InstinitDistWsq}{\pi_w} \bigg(\frac{\trackingressum\alpha_w\kappa}{\kappa-1}\bigg)
    \end{equation*}
    and
    \begin{equation}
        \label{eq:tracking:ressum}
        \trackingressum
        \le
        \frac{(\alpha_u\pi_u+\alpha_w\pi_w)\kappa\overline\kappa}{p(\kappa-p)}
        + \frac{\alpha_w {\primaldifffact}\pi_u\overline\kappa}{p^2(\kappa-p)^2}.
    \end{equation}
\end{theorem}

\begin{proof}
    By \cref{lemma:semicon:props}\,\cref{item:semicon:props:young},
    \[
        \dualprod{\estdiff F(\this x) - F'(\this x)}{\nextx - \bar x}_{X^*,X}
        \ge
        - \frac{1}{2\tilde\gamma}\trackingErrorSq{\nextestdiff}{F'(\this x)} - \frac{\tilde\gamma}{2}\normalDistXsq{\nextx}{\bar x}.
    \]
    Let $\thisDistXprev \defeq \InstthisDistXprev$, $\thisDistU \defeq \InstthisDistU$,  $\thisDistW \defeq \InstthisDistW$, and $\scalarTrackingErrorThis=\trackingErrorThis$.
    Then \cref{ass:tracking:main} and $\{x^n\}_{n=0}^k \subset \localset$ imply \cref{ass:scalar-tracking:main} for the index $k$.
    Now an application of \cref{lemma:scalar-tracking:error-sum} establishes
    \[
        \trackingErrorThisSq
        \le
        \trackingressum^2 \InstnextDistXthisSq
        +
        e_{p,k},
    \]
    where $\trackingressum$ and $e_{p,k}$ defined in \cref{eq:scalar-tracking:ressum,eq:scalar-tracking:ek} satisfy the claimed bounds.
    Combining these two inequalities establishes our claim.
\end{proof}

\begin{remark}
    \label{rem:tracking:error-sum}
     $\Psi_p$ can be made arbitrarily small by taking good-quality first inner and adjoint steps.
     The principal penalty from subsequent inexact steps is, therefore, $\trackingressum$.
\end{remark}

It will also be useful to have a Lipschitz-type property.
Combining the next theorem with $F'$ being $L$-Lipschitz with respect to $\dXstar$ and $\dX$, we can get the Lipschitz property with error for $\estdiff F$,
\[
    \trackingError{\nextestdiff}{F'(x)} \le L\normalDistX{\thisx}{x} + \sqrt{\errLip{k}}
    \qquad (x \in X).
\]
According to the theorem, the error terms again have bounded sum, if the squares of residual terms $\InstnextDistXthis[k]$ from the tracking inequalities have a bounded sum.
Many standard convergence proofs, include the ones of \cref{sec:fb}, will generally ensure the latter.
The theorem also shows that, in this case, the distance between the inner and adjoint iterates and the inner and adjoint solutions goes to zero.

\begin{theorem}
    \label{thm:weaker:erroneous-lipschitz}
    Suppose \cref{ass:tracking:main} holds up to an iteration $k \in \N$, and that
    $\{x^n\}_{n=0}^k \subset \localset$.
    Then
    \begin{equation}
        \label{eq:weaker:erroneous-lipschitz}
        \trackingErrorThisSq
        \le
        \bigl(\alpha_u \InstnextDistU + \alpha_w \InstnextDistW\bigr)^2
        \le
        \errLip{k},
    \end{equation}
    where each $\errLip{k} \ge 0$ is such that
    \begin{equation}
        \label{eq:weaker:erroneous-lipschitz:error-bound}
        \sum_{n=0}^{k-1} \errLip{n}
        \le \Psi_1 + \trackingressum[1]\sum_{n=0}^{k-1}\InstnextDistXthisSq[n].
    \end{equation}
\end{theorem}

\begin{proof}
    Let $\thisDistXprev \defeq \InstthisDistXprev$, $\thisDistU \defeq \InstthisDistU$,  $\thisDistW \defeq \InstthisDistW$, as well as $\scalarTrackingErrorThis=\trackingErrorThis$.
    Then \cref{ass:tracking:main} and $\{x^n\}_{n=0}^k \subset \localset$ imply \cref{ass:scalar-tracking:main} for the index $k$.
    Now \cref{lemma:scalar-tracking:inner-product-error-estimate} with $p=1$ gives \eqref{eq:weaker:erroneous-lipschitz} for $\errLip{k} \defeq \breve e_{1,k}$ as defined in \eqref{eq:scalar-tracking:ek:0}.
    The properties $\errLip{k}$ are a consequence of \cref{lemma:scalar-tracking:error-one}.
\end{proof}

\section{Inner and adjoint algorithms}
\label{sec:inner-adjoint}

We next provide brief examples of inner and adjoint methods that satisfy the corresponding parts of \cref{ass:tracking:main}; hence iterative ways to construct differential estimates $\nextestdiff$ that satisfy the inexact smoothness-type properties of \cref{sec:smoothness}.
This section is largely based on \cite{suonpera2024general}, with some streamlined proofs, and some omitted proofs.
The framing of the proofs in our differential estimation framework, is obviously new.

As in \cref{ass:tracking:main}, we equip $X$ with the arbitrary semi-norm $\norm{\freevar}_\circ$, and $X^*$ with $\norm{\freevar}_*$ constructed in \cref{sec:semicon}.
However, unless otherwise indicated, we now fix
\begin{align*}
    \dU(u, \tilde u)
    =
    \norm{u - \tilde u}_U
    \quad\text{and}\quad
    \dW(w, \tilde w)
    =
    \norm{w - \tilde w}_W.
\end{align*}
Equivalent norms can be used, as long as any relevant factors are adjusted correspondingly.

We generally assume that $S_u$, and similarly $S_w$, is $\lipSu\bX$-Lipschitz within $\localset$, i.e, $\norm{S_u(x)-S_u(\tilde x)}_U \le \lipSu\bX(x, \tilde x)$ for $x, \tilde x \in \localset$.

\subsection{Inner algorithms and the inner tracking inequality}
\label{sec:inner-adjoint:inner}

\paragraph{Forward-backward splitting}

On a Hilbert space $U$ and a normed space $X$, consider the parametric inner problem
\[
    S_u(x) = \argmin_u f(u; x) + g(u; x)
\]
for $f$ and $g$ convex and proper and lower semicontinuous in $u$, with $f$ differentiable in $u$.
If also $g$ is differentiable in $u$, this is an instance of \eqref{eq:intro:tsu} with
\[
    T(u, x) = \grad f(u; x) + \grad g(u; x).
\]
If $g$ is nondifferentiable, we still obtain an instance of \eqref{eq:intro:tsu} by taking for any $\theta>0$ \cite[Lemma 6.22]{clason2020introduction}
\begin{equation}
    \label{eq:tracking:fb-t-alt}
    T(u, x) = \prox_{\theta g(\freevar; x)}(u - \theta \grad f(u; x)) - u.
\end{equation}

\begin{theorem}
    \label{thm:tracking:inner-fb}
    Suppose $\grad f(\freevar; x)$is  $L$-Lipschitz and $\gamma_f$(-strongly) convex, and $g(\freevar; x)$ is $\gamma_g$(-strongly) convex (and proper and lower semicontinuous), both uniformly in $x \in \localset$ for an $\localset \subset X$, where we allow $\gamma_f=0$ or $\gamma_g=0$ as long as $\gamma_f+\gamma_g>0$.
    If $S_u$, as defined above, is $\lipSu\bX$-Lipschitz in $\localset$, and $\theta L \le 2$ for a step length parameter $\theta>0$, then  \cref{ass:tracking:main}\,\cref{item:tracking:main:inner-tracking} holds for the inner forward-backward splitting updates
    \[
        \nextu \defeq \prox_{\theta g(\freevar; \thisx)}(\thisu - \theta \grad f(\thisu; \thisx))
    \]
    with
    $\kappa_u = \sqrt{(1+2\theta\gamma_g)/(1-2\theta\lambda\gamma_f)}>1$ for $\lambda = 1-\theta L/2 \in (0, 1]$.
\end{theorem}

\begin{proof}
    Let $k \in \N$ and $\thisx \in \localset$.
    Write for brevity $f = f(\freevar; \thisx)$ and $g = g(\freevar; \thisx)$, as nothing in the initial part of the proof depends on the parametricity.
    By the strong convexity, properness, and lower semicontinuity, $\hat u \defeq S_u(\thisx)$ exists; in fact, there exists $\hat q \in \subdiff g(\hat u)$ such that $0 = \hat q + \grad f(\hat u)$ \cite[Theorems 3.8, 4.2, 4.5 \& 4.14]{clason2020introduction}.
    Interpolating between the $\gamma_f$-strong monotonicity and the $\inv L$-cocoercivity of $\grad f$ (see \cite[Chapter 7]{clason2020introduction}) with $\lambda$, and using Young's inequality, we deduce
    \[
        \begin{split}
        \iprod{\grad f(\thisu)& - \grad f(\hat u)}{\nextu-\hat u}
        =
        \iprod{\grad f(\thisu) - \grad f(\hat u)}{\thisu-\hat u}
        +
        \iprod{\grad f(\thisu) - \grad f(\hat u)}{\nextu-\thisu}
        \\
        &
        \ge
        \lambda \gamma_f \norm{\thisu-\hat u}^2 + (1-\lambda)\inv L \norm{\grad f(\thisu) - \grad f(\hat u)}^2
        +  \iprod{\grad f(\thisu) - \grad f(\hat u)}{\nextu-\thisu}
        \\
        &
        \ge
        \lambda \gamma_f \norm{\thisu-\hat u}^2
        - \frac{1}{2\theta}\norm{\nextu-\thisu}^2.
        \end{split}
    \]
    Since $g(\freevar, \thisx)$ is $\gamma_g$-strongly monotone,
    $
        \iprod{\nexxt q - \hat q}{\nexxt u - \hat u} \ge \gamma_g\norm{\nextu-\optu}^2.
    $
    Summing these two inequalities, we obtain
    \begin{equation}
        \label{eq:inner:fb:growth}
        \iprod{\grad f(\thisx) + \nexxt q}{\nextu - \opt u} - \lambda \gamma_f\norm{\thisu-\hat u}^2
        \ge
        \gamma_g\norm{\nextu-\hat u}^2 - \frac{1}{2\theta}\norm{\nextu-\thisu}^2.
    \end{equation}
    Applying the implicit update $0 = \nexxt q + \grad f(\thisu) + \inv\theta(\nextu-\thisu)$ for some $\nexxt q \in  g(\nextu)$ and using Pythagoras' identity yields $(\frac{1}{2\theta}-\lambda\gamma_f)\norm{\thisu-\optu}^2 \ge \left(\frac{1}{2\theta} + \gamma_g\right)\norm{\nextu-\optu}^2$.
    Let now $k \ge 1$ with $\thisx, \prevx \in \localset$.
    Rearranging and applying the assumed Lipschitz continuity of $S_u$, we obtain the required
    \[
        \kappa_u
        \norm{\nextu-S_u(\thisx)}_U
        \le
        \norm{\thisu-S_u(\thisx)}_U
        \le
        \norm{\thisu-S_u(\prevx)}_U + \lipSu\InstthisDistXprev.
        \qedhere
    \]
\end{proof}

\begin{remark}[Lipschitz solution mapping]
    The Lipschitz assumption on $S_u$ is guaranteed in sufficiently smooth cases by the classical implicit function theorem applied to the equation $T(u, x)=0$; see \cite[Appendix B]{suonpera2024general}.
    Nonsmooth implicit function theorems and the Aubin or pseudo-Lipschitz property of the set-valued mapping $S_u$ are studied in, e.g., \cite{dontchev2014implicit,ioffe2017variational} as well as \cite[Theorem 28.3]{clason2020introduction}.
    If $S_u$ has the Aubin property, it will be Lipschitz if we assume, e.g., strict convexity to ensure the uniqueness of solutions.
    For the specific case $f(u; x)=\bar f(u)$ and $g(u; x)= x\bar g(u)$ with a scalar $x$, we refer to \cite[Theorem 28.5]{clason2020introduction}.
    A case where $g$ is a constraint is studied in \cite[Theorem 4.51]{bonnans2000perturbation}.
\end{remark}

\paragraph{Primal-dual proximal splitting}

On a Hilbert space $Z$ and a normed space $X$, consider the inner problem
\[
    \min_z f(z; x) + g^*(Kz; x).
\]
for $K \in \linear(Z; Y^*)$ linear and bounded to a Hilbert space $Y^*$, both $f$ and $g$ convex in the first parameter, and differentiable in both parameters.
As an instance of \eqref{eq:intro:tsu}, we represent the Fenchel–Rockafellar primal-dual optimality conditions of this problem as a root $u$ of the mapping
\[
    T(u, x) = (\grad f(z; x) + K^* y, \grad g(y; x) - Kz)
    \quad\text{where}\quad u=(z, y) \in U = Z \times Y.
\]

\begin{theorem}
    \label{thm:tracking:inner-pdps}
    Suppose $g(\freevar; x)$ and $f(\freevar; x)$ are $\gamma$-strongly convex uniformly in $x \in \localset$ for a $\localset \subset X$.
    If $S_u(x)=\inv T(\freevar; x)(0)$ is $\lipSu\bX$-Lipschitz in $\localset$, and the step length parameters $\sigma_p,\sigma_d>0$ satisfy $\sigma_p\sigma_d\norm{K} \le 1$, then \cref{ass:tracking:main}\,\cref{item:tracking:main:inner-tracking} holds for the inner PDPS updates
    \[
        \nextz = \prox_{\sigma_p f(\freevar; \thisx)}(\thisz - \sigma_p K^*\thisy)
        \quad\text{and}\quad
        \nexty = \prox_{\sigma_d g(\freevar; \thisx)}(\thisy + \sigma_d K(2\nextz-\thisz))
    \]
    with
    \[
        \dU(u, \tilde u) = \norm{u - \tilde u}_N
        \quad\text{for}\quad
        N = \begin{pmatrix}
            \inv\sigma_p \Id & -K^* \\
            -K & \inv\sigma_d \Id
        \end{pmatrix}.
    \]
\end{theorem}

The proof in \cite[Theorem 3.6]{suonpera2024general} is fundamentally similar to the forward-backward splitting in \cref{thm:tracking:inner-fb}, but requires working with operator-induced norms and monotone operators.

\paragraph{Linear system splitting}

We now cover algorithms for PDEs as the inner problems.
For $U, X$, and $W_*$ normed spaces, let both $A_x \in \linear(U; W_*)$, modelling a linear PDE parametrised $x$, and the right hand side $b_x \in W_*$ be Lipschitz in $x \in X$.
Consider the inner constraint of $u=S_u(x)$ satisfying
\begin{equation}
    \label{eq:tracking:linear-inner}
    A_x u = b_x.
\end{equation}
This is again an instance of  \eqref{eq:intro:tsu} when we set
\[
    T(u, x) = A_x u - b_x.
\]
Two solve \eqref{eq:tracking:linear-inner} inexactly and efficiently, we split $A_x$ into two components, as in standard Gauss–Seidel or Jacobi splittings.

\begin{assumption}[Admissible splitting]
    \label{ass:primal-admissible-splitting}
    Let $X, U$, and $W_*$ be normed spaces, and $A_x \in \linear(U; W_*)$ for all $x$ in a set $\localset \subset X$.
	We assume to be given splittings $A_x=N_x+M_x$ with $N_x$ invertible, satisfying
	$\kappa_u \norm{\inv N_x M_x}_{\linear(U; U)} \le 1$  for some $\kappa_u>1$, for all $x \in \localset$.
\end{assumption}

\begin{theorem}
    \label{thm:tracking:inner-linear-system-splitting}
    Suppose \cref{ass:primal-admissible-splitting} holds and that $S_u(x)=\inv A_x b_x$ is $\lipSu\bX$-Lipschitz in $\localset$.
    Then the updates $\nextu = \inv N_{\thisx}(b_{\thisx} - M_{\thisx}\thisu)$ satisfy \cref{ass:tracking:main}\,\cref{item:tracking:main:inner-tracking} in $\localset$
\end{theorem}

\begin{proof}
    Let $k \ge 1$ with $x^k, x^{k-1} \in \localset$.
    We have
    \[
        \nextu - S_u(\thisx)
        =\inv N_{\thisx}(b_{\thisx} - M_{\thisx}\thisu) - \inv N_{\thisx}(b_{\thisx} - M_{\thisx}S_u(\thisx))
        =-\inv N_{\thisx}M_{\thisx}(\thisu - S_u(\thisx)).
    \]
    Hence
    \[
        \kappa_u \norm{\nextu - S_u(\thisx)}_U
        \le
        \norm{\thisu - S_u(\thisx)}_U
        \le
        \norm{\thisu - S_u(\prev x)}_U
        + \lipSu \InstthisDistXprev.
        \qedhere
    \]
\end{proof}

To provide examples of \cref{ass:primal-admissible-splitting}, we extend the condition to splittings that are admissible for the adjoint equations that we treat next.

\begin{assumption}[Adjoint admissible splitting]
    \label{ass:adjoint-admissible-splitting}
    Let $X, U$, and $W_*$ be normed spaces, and $A_x \in \linear(W; U^*)$ for all $x$ in a set $\localset \subset X$.
	We assume to be given splittings $A_x=N_x+M_x$ with $N_x$ invertible and satisfying for some $\kappa_w > 1$ and $\gamma_N>0$ the bounds
	\begin{equation}
		\label{eq:convergence:splitting:split-condition}
		\kappa_w \norm{\inv N_x M_x}_{\linear(W; W)} \le 1
		\quad\text{and}\quad
		\gamma_N\norm{\inv N_x}_{\linear(W; U^*)} \le 1
		\quad\text{for all}\quad x \in \localset.
	\end{equation}
\end{assumption}

\begin{example}[Jacobi splitting]
	\label{ex:splitting:jacobi}
	Let $A_x \in \R^{n \times n}$, and take $N_x$ as the diagonal of $A_x$.
	We have $\kappa \norm{\inv N_x M_x}_{\linear(U; U)} \le 1$ for some $\kappa>1$ when $A_x$ is either strictly diagonally dominant \cite[§10.1]{golub1996matrix}.
	We have $\gamma_N\norm{\inv N_x} \le 1$ when the main diagonal of $A_x$ has only positive entries. Then $\gamma_N$ is the minimum of the diagonal values.
\end{example}

\begin{example}[Gauss--Seidel]
	\label{ex:splitting:Gauss–Seidel}
	Let $A_x \in \R^{n \times n}$, and take $N_x$ as the upper triangle and diagonal of $A_x$.
	We have $\kappa \norm{\inv N_x M_x}_{\linear(U; U)} \le 1$ for some $\kappa>1$ when $A_x$ is either strictly diagonally dominant or symmetric and positive definite \cite[§10.1]{golub1996matrix}.
	We have $\gamma_N\norm{\inv N_x} \le 1$ when $N_x$ is invertible, i.e., has no zeros on the main diagonal.
\end{example}

Several other splittings are possible.
The trivial splitting of $A_x \in (U; U)$ takes $N_x=\theta\Id$ for some step length parameter $\theta$, and $M_x=A-N_x$.
In \cite{suonpera2024general} a “block-Gauss–Seidel” scheme is employed on an operator that has easily invertible diagonal blocks.
Such a scheme could also be applied to a domain decomposition.

\subsection{Adjoint algorithms and the adjoint tracking inequality; the differential transformation condition}
\label{sec:inner-adjoint:adjoint}

We now treat adjoint methods and the differential transformation, i.e., \cref{ass:tracking:main}\,\cref{item:tracking:main:inner-tracking,item:tracking:main:adjoint-tracking}.
We concentrate on the reduced adjoint; the adjoint tracking inequality for the basic adjoint is treated in \cite{suonpera2024general}, and the differential transformation condition is proved similarly to the reduced adjoint.

With $X$, $U$, and $W_*$ normed spaces, let, thus, $S_u$ and $T$ satisfy \eqref{eq:intro:tsu}, and define
\[
    \nextestdiff \defeq \nexxt w \diffwrt{T}{x}(\nexxt u, \thisx)
\]
for $\nexxt w \in W$ computed by taking (single or multiple) admissible splitting steps (see \cref{ass:adjoint-admissible-splitting,ex:splitting:Gauss–Seidel,ex:splitting:jacobi}) on the linear equation
\[
    w \diffwrt{T}{u}(\nexxt u, \thisx) + J'(\nextu) = 0
\]
with unknown $w$.
Correspondingly, let $S_w$ arise from the reduced adjoint \eqref{eq:tracking:reduced-adjoint}.
The next theorem shows that the scheme satisfies \cref{ass:tracking:main}\,\cref{item:tracking:main:adjoint-tracking,item:tracking:main:differential-transformation} in case of single steps; multiple steps follow immediately with larger $\kappa_w$.

For the theorem, we recall from \cref{sec:semicon} the (possibly infinite-valued) norm-like construct $\norm{\freevar}_{\circ,W_*}$ on $\linear(X; W_*)$.
If our base seminorm $\norm{\freevar}_\circ$ on $X$ is just the standard norm on $X$, then also $\norm{\freevar}_* = \norm{\freevar}_{X^*}$ and $\norm{\freevar}_{\circ,W_*}=\norm{\freevar}_{\linear(X; W_*)}$. We will need the flexibility of these more general constructs in the application examples of \cref{sec:numerical}.

\begin{theorem}
    \label{thm:tracking:reduced-adjoint-splitting}
    Suppose that $T|U \times \localset$ and $S_u|\localset$ are Lipschitz-continuously differentiable for a $\localset \subset X$, and that the adjoint admissible splitting \cref{ass:adjoint-admissible-splitting} holds in $U \times \localset$ for the linear operators $A_{(u, x)} \in \linear(W; U^*)$, $w \mapsto w\diffwrt{T}{u}(u, x)$.
    Suppose, moreover, that $u \mapsto \diffwrt{T}{u}(u, x) \in \linear(U; W_*)$ is $L_{\diffwrt{T}{u};u}$-Lipschitz for all $x \in \localset$; that $J': U \to U^*$ is $L_{J'}$-Lipschitz; that $S_w: X \to W$ is $\lipSw\bX$-Lipschitz in $\localset$; and that
    \[
        N_{S_w} \defeq \sup\{ \norm{S_w(x)}_W \mid x \in \localset \}< \infty.
    \]
    Let
    \[
        \nexxt w \defeq -\inv N_{(\nexxt u, \thisx)}(J'(\nextu)+M_{(\nexxt u, \thisx)}\this w).
    \]
    Then \cref{ass:tracking:main}\,\cref{item:tracking:main:adjoint-tracking} holds in $\localset$ with
    $\mu_u = \kappa_w\inv\gamma_N(L_{\diffwrt{T}{u};u}N_{S_w} + L_J)$.

    Equipping $\linear(X; W_*)$ with the $\norm{\freevar}_{\circ,W_*}$ distance, suppose further that $u \mapsto \diffwrt{T}{x}(u, x) \in  \linear(X; W_*)$ is $L_{\diffwrt{T}{x};u}$-Lipshitz for all $x \in \localset$ and that
    \[
        C_{\diffwrt{T}{x}} \defeq \sup\{ \norm{\diffwrt{T}{x}(\nextu, \thisx)}_{\circ,W^*} \mid k \in \N \} < \infty.
    \]
    Then the differential transformation \cref{ass:tracking:main}\,\cref{item:tracking:main:differential-transformation} holds for $\nextestdiff \defeq \nexxt w \diffwrt{T}{x}(\nexxt u, \thisx)$ with $\alpha_u=N_{S_w}L_{\diffwrt{T}{x}; u}$ and $\alpha_w=C_{\diffwrt{T}{x}}$.
\end{theorem}

\begin{proof}
    To prove \cref{ass:tracking:main}\,\cref{item:tracking:main:adjoint-tracking}, let $k \ge 1$ with $x^k, x^{k-1} \in \localset$.
    For brevity, set $v=(\nextu, \thisx)$, $A_vw = w\diffwrt{T}{u}(v)$, and $b_v=-J'(\nextu)$.
    Likewise let $\bar v=(S_u(\thisx), \thisx)$, $A_{\bar v}w = w\diffwrt{T}{u}(\bar v)$, and $b_{\bar v}=-J'(S_u(\thisx))$.
    Assuming that $\thisx \in \localset$, we proceed as in \cite[Lemma 4.1]{suonpera2024general}:
    Observing that $A_{\bar v}S_w(\thisx)=b_{\bar v}$ and $A_vS_w(\thisx)=(N_{v}+M_{v})S_w(\thisx)$, we rearrange
    \[
        \begin{split}
        \nexxt w - S_w(\thisx)
        &
        =
        \inv N_v(b_v - M_v \this w) -  S_w(\this x)
        \\
        &
        =
        \inv N_v(b_v -b_{\bar v}) - \inv N_v (A_v - A_{\bar v})S_w(\thisx)
        - \inv N_vM_v(\this w - S_w(\thisx)).
        \end{split}
    \]
    \Cref{ass:tracking:main}\,\cref{item:tracking:main:adjoint-tracking} now follows from
    \[
        \begin{split}
        \kappa_w\norm{\nexxt w - S_w(\this x)}_W
        &
        \le
        \kappa_w\inv\gamma_N(L_{J'} + L_{\diffwrt{T}{u};u}\norm{S_w(\thisx)}_W)\norm{v-\bar v}_{U \times X}
        +
        \norm{\this w - S_w(\thisx)}_W
        \\
        &
        \le
        \kappa_w\inv\gamma_N(L_{J'} + L_{\diffwrt{T}{u};u}N_{S_w})\norm{\nextu - S_u(\thisx)}_U
        \\
        \MoveEqLeft[-1]
        +
        \norm{\this w - S_w(\prev x)}_W
        +
        \lipSw\InstthisDistXprev.
        \end{split}
    \]

    We can write
    \[
        \begin{split}
        \nextestdiff - F'(\thisx)
        &
        =
        \nexxt w \diffwrt{T}{x}(\nexxt u, \thisx) - S_w(\thisx)\diffwrt{T}{x}(S_u(\thisx), \thisx)
        \\
        &
        = [\nexxt w -S_w(\thisx)]\diffwrt{T}{x}(\nexxt u, \thisx) - S_w(\thisx)[\diffwrt{T}{x}(S_u(\thisx), \thisx)-\diffwrt{T}{x}(\nexxt u, \thisx)].
        \end{split}
    \]
    Recall from \cref{lemma:semicon:props} that $\norm{\freevar}_*$ satisfies the triangle inequality and an operator norm type inequality with respect to $\norm{\freevar}_{\circ, W_*}$.
    \Cref{ass:tracking:main}\,\cref{item:tracking:main:differential-transformation} then follows, for any $k \in \N$ with $x^k \in \localset$, from
    \[
        \begin{split}
            \norm{\nextestdiff - F'(\thisx)}_*
            &
            \le
            \norm{\diffwrt{T}{x}(\nexxt u, \thisx)}_{\circ,W_*}
                \norm{\nexxt w -S_w(\thisx)}_W
            \\
            \MoveEqLeft[-1]
            + \norm{S_w(\thisx)}_W \norm{\diffwrt{T}{x}(S_u(\thisx), \thisx)-\diffwrt{T}{x}(\nexxt u, \thisx)}_{\circ, W_*}
            \\
            &
            \le
            C_{\diffwrt{T}{x}} \norm{\nexxt w - S_w(\thisx)}_W
            + N_{S_w} L_{\diffwrt{T}{x};u}\norm{\nextu- S_u(\thisx)}_{U}.
            \qedhere
        \end{split}
    \]
\end{proof}

\section{Nonconvex forward-backward type methods with inexact updates}
\label{sec:fb}

We now need to prove the convergence of \emph{outer methods} for the overall problem \eqref{eq:intro:oc}, given estimates $\estdiff F(\thisx)$ of  $F'(\thisx)$ by \emph{inner and adjoint methods}, the latter two satisfying the tracking theory of \cref{sec:tracking}.
In this section, we do this through a convergence theory for \emph{general inexact forward backward-type methods} in a normed space $X$.
Our treatment encompasses primal-dual methods, seen as forward-backward methods
with respect to appropriate operator-relative (semi-)norms, discussed in the previous section.
We introduce such methods in \cref{sec:fb:methods}.
Then in \cref{sec:fb:growth} we introduce abstract growth conditions, which we will use in \cref{sec:fb:subdiff,sec:fb:non-escape,sec:fb:values,sec:fb:weak} to prove various forms of convergence.

Afterwards, in \cref{sec:outer-examples}, we will verify the growth inequalities for forward-backward and primal-dual algorithms that use the tracking theory of \cref{sec:tracking} for (single-loop) updates of an inner problem.

\subsection{General inexact forward-backward type methods}
\label{sec:fb:methods}

For proper $F, G: X \to \extR$, consider the problem
\begin{equation}
    \label{eq:fb:problem}
    \min_{x \in X} F(x) + G(x).
\end{equation}
In this subsection, and in the examples of \cref{sec:outer-examples}, $G$ will be convex and lower semicontinuous, and $F$ Fréchet differentiable, but the general theory of \cref{sec:fb:growth,sec:fb:subdiff,sec:fb:non-escape,sec:fb:values,sec:fb:weak} will make no such assumption.

For an initial $x^0$, if $X$ is Hilbert, the iterates $\{\this{x}\}_{k=1}^\infty$ of the basic inexact forward-backward method are generated for some step length parameter $\tau>0$ and an estimate $\estgrad F(\thisx)$ of $\grad F(\thisx)$ (not necessarily the one from \cref{sec:tracking}) by
\begin{equation}
    \label{eq:fb:alg-basic}
    \nexxt x \defeq \prox_{\tau G}(\this x - \tau \estgrad F(\this x)).
\end{equation}
In implicit form the method reads
\begin{equation}
    \label{eq:fb:fb-inexact}
    -\inv\tau(\nextx-\thisx) \in \estgrad F(\thisx) + \subdiff G(\nextx).
\end{equation}

We generalise this problem and method by considering for a skew-adjoint $\Xi\in \linear(X; X^*)$, i.e., $\Xi^*|X=-\Xi$, the problem of finding $x \in X$ satisfying
\begin{equation}
    \label{eq:fb:generic-problem}
    0 \in H(x) \defeq F'(x) + \subdiff G(x) + \Xi x.
\end{equation}
Recalling the preliminary discussion in \cref{sec:semicon}, we pick a self-adjoint and positive semi-definite \term{preconditioning operator} $M \in \linear(X; X^*)$.
We then intent to solve this problem with an implicit method of the rough form
\begin{equation}
    \label{eq:fb:implicit}
    -M(\nextx-\thisx) =: \totalest{k+1} \approxin F'(\thisx) + \subdiff G(\nextx) + \Xi\nextx.
\end{equation}
Here the approximate inclusion “$\approxin$” generalises the inexact gradient $\estgrad F(\this x)$ to other forms of inexactness. We will make the---for the moment---imprecise notion more precise through the growth inequalities of \cref{sec:fb:growth}.
Besides being essential for constructing primal-dual methods, as we will shortly see, $M$ allows working in non-Hilbert spaces in \cref{sec:eit} or \cite{tuomov-predict}, or avoiding Riesz representations in Hilbert spaces, such as $H^{-1/2}(\Omega)$, in \cref{sec:minsurf}.
We could generalise $M$ to a Bregman divergence, but choose simplicity of presentation.

Algorithms of the form  \eqref{eq:fb:implicit} with an exact inclusion for $\totalest{k+1}$, cover many common splitting algorithms, such as Douglas–Rachford splitting (DRS) and the primal-dual proximal splitting (PDPS) of \cite{chambolle2010first}; see \cite{clason2020introduction,tuomov-proxtest}.
As we will see in the following examples, with an inexact inclusion, besides the inexact gradients of \cref{sec:tracking}, the approach also covers inexact proximal maps and mismatched adjoints \cite{lorenz2023mismatch} in primal-dual methods.
Inexact proximal maps were used, e.g., in \cite{tuomov-predict} for point source localisation in measure spaces.

Indeed, for general $M$, \emph{there does not necessarily exist an exact solution $\nextx \in X$ to \eqref{eq:fb:implicit}}. If $\Xi=0$, subject to standard regularity conditions, exact \eqref{eq:fb:implicit} arises as the first-order optimality conditions of the surrogate model
\[
    \min_{x \in X}~ G(x) + \dualprod{F'(\thisx)}{x-\thisx} + \frac{1}{2}\norm{x-\thisx}_M^2.
\]
If $M$ does not provide the coercitivity for this problem to have a solution in $X$, and a solution is required, the existence has to be verified otherwise. The same applies when we replace $F'(\thisx)$ by an estimate $\estdiff F(\thisx)$.

The next example demonstrates how \cref{eq:fb:generic-problem,eq:fb:implicit} accommodate primal-dual methods.

\begin{example}[Primal-dual proximal splitting]
    \label{ex:fb:pdps}
    On normed spaces $Z$ and $Y$, let $g: Z \to \extR$ and $h: Y^* \to \extR$ be convex, proper, and lower semicontinuous, $f: Z \to \R$ possibly non-convex but Fréchet differentiable, and $K \in \linear(Z; Y^*)$.
    Suppose $h=(h_*)^*$ for some $h_*: Y \to \extR$, and consider the problem
    \begin{equation}
        \label{eq:fb:pdps:problem}
        \min_{z \in Z}~ f(z) + g(z) + h(Kz)
        =
        \min_{z \in Z} \max_{y \in Y}~ f(z) + g(z) + \dualprod{y}{Kz}_{Y,Y^*} - h_*(y).
    \end{equation}
    If $f$ is convex, subject to the standard condition on the existence of $x_0 \in \interior \Dom [h \circ K] \isect \Dom [f+g] \ne \emptyset$ with $Kx_0 \in \interior \Dom h$,\footnotemark[1] the Fenchel–Rockafellar theorem \cite[Theorem~5.11]{clason2020introduction} gives rise to the necessary and sufficient first-order primal-dual optimality conditions of the form \eqref{eq:fb:generic-problem},
    \begin{gather}
        \nonumber
        0 \in H(z, y) = \begin{pmatrix}
            \subdiff g(z) + f'(z) + K^*y \\
            \subdiff h_*(y) - Kz.
        \end{pmatrix}
        = F'(z, y) + \subdiff G(z, y) + \Xi(z, y),
    \shortintertext{where}
        \label{eq:fb:pdps:functions}
        F(z, y)= f(z),
        \quad
        G(z,y)=g(z) + h_*(y),
        \quad\text{and}\quad
        \Xi=\begin{pmatrix} 0 & K^* \\ -K & 0 \end{pmatrix}.
    \end{gather}
    If $f$ is nonconvex, the necessity can be shown through, e.g., Mordukhovich subdifferentials, and their compatibility with both convex subdifferentials and Fréchet derivatives; see, e.g., \cite{clason2020introduction}.

    Pick step length parameters $\tau, \sigma > 0$.
    With inexact gradients for $f$, the PDPS in Hilbert spaces reads
    \begin{equation}
        \label{eq:pdps:alg}
        \left\{
            \begin{array}{l}
                \nexxt z \defeq \prox_{\tau g}(\this z - \tau \estgrad f(\this z) - \tau K^*\thisy), \\
                \nexty \defeq \prox_{\sigma h_*}(\thisy + \sigma K(2\nexxt z - \this z)).
            \end{array}
        \right.
    \end{equation}
    When $f=j \circ S_u$ for $S_u$ a PDE solution operator, and we compute $\estgrad f$ following \cref{thm:tracking:inner-linear-system-splitting,thm:tracking:reduced-adjoint-splitting}, \eqref{eq:pdps:alg} becomes the algorithm presented in \cite{jensen2022nonsmooth}.

    To extend \eqref{eq:pdps:alg} to general normed spaces, we write it in $X = Z \times Y$ in the implicit form \eqref{eq:fb:implicit} as
    \begin{subequations}
    \label{eq:fb:pdps:implicit}
    \begin{gather}
        0 \in \estdiff F(\thisx) + \subdiff G(\nextx) + \Xi\nextx + M(\nextx-\thisx)
        \quad\text{for}
        \\
        \label{eq:fb:pdps:est-and-m}
        \estdiff F(\this z, \thisy) \defeq \begin{pmatrix}
            \estdiff f(\this z) \\
            0
        \end{pmatrix}
        \quad\text{and}\quad
        M \defeq
        \begin{pmatrix}
            \inv \tau M_z, & -K^* \\
            -K & \inv\sigma M_y
        \end{pmatrix}
    \end{gather}
    \end{subequations}
    for some self-adjoint positive semi-definite $M_z \in \linear(Z; Z^*)$ and $M_y \in \linear(Y; Y^*)$.
    For standard proximal maps in Hilbert spaces, we take $M_z: Z \hookrightarrow Z^*$ and $M_y: Y \hookrightarrow Y^*$ as the standard injections, $z \mapsto \iprod{z}{\freevar}_Z \in Z^*$.
    In that case, $M$ is self-adjoint and positive semi-definite when $\tau\sigma\norm{K}^2 \le 1$, while the treatment of exact forward steps with respect to $f$ requires\footnotemark[2]~$\tau L + \tau\sigma\norm{K}^2 \le 1$ for $L$ the Lipschitz factor of $f'$ \cite{tuomov-proxtest,clason2020introduction,he2012convergence}.
\end{example}

\footnotetext[1]{Several relaxations are possible, include using the relative interior, or the formulas of \cite{attouch1986dualitysumconvex}.}

\footnotetext[2]{This is the requirement for gap estimates; for iterate estimates $L/2$ in place of $L$ is sufficient. In \cite{yan2024improved} an overall factor $4/3$ improvement is shown through an analysis that involves historical iterates.}

\subsection{Inexact growth inequalities}
\label{sec:fb:growth}

We now provide several alternative assumptions that give a precise meaning to the approximate inclusion in \eqref{eq:fb:implicit}.
For this, we first define the \term{Lagrangian gap functional}
\begin{equation}
    \label{eq:fb:general:gap}
    \gap(x; \basex) \defeq [F+G](x) - [F+G](\basex) - \dualprod{\Xi x}{\basex}_{X^*,X}.
\end{equation}

\begin{example}
    \label{ex:fb:fb:gap}
    For forward-backward splitting, $\gap(x; \basex) =  [F+G](x) - [F+G](\basex)$ is simply a function value difference.
\end{example}

\begin{example}
    \label{ex:fb:pdps:gap}
    For the PDPS of \cref{ex:fb:pdps}, with $x=(y,z)$, we obtain the \term{Lagrangian duality gap}
    \[
        \gap(x; \basex)
        =
        \mathcal{L}(z, \basey) - \mathcal{L}(\base z, y)
        \quad\text{for}\quad
        \mathcal{L}(z, y)
        \defeq
        [f+g](z) + \dualprod{Kz}{y} - h_*(y).
    \]
    This is different from the true duality gap that arises from the Fenchel–Rockafellar theorem. For the latter no convergence results exist to our knowledge.
    In the convex case, if $0 \in H(\basex)$, the Lagrangian gap is non-negative, however, it may be zero even if $0 \not\in H(\basex)$, unlike for the true duality gap.
\end{example}

In the assumptions that follow, the factor $\gamma \in \R$ generally models available second-order growth, while $\LL \in \linear(X; X^*)$ is related to the Lipschitz continuity of $F'$; compare \eqref{eq:weaker:descent3}.
We allow $\LL$ to be an operator to model the fact that in, e.g., the PDPS of \cref{ex:fb:pdps}, we take forward steps only in the primal variable.
If $\LL$ were a scalar, the step length condition $\tau L + \tau\sigma\norm{K}^2 \le 1$, where $L$ only multiplies the primal step length $\tau$, would become much stricter.

For subdifferential convergence, we will need an inexact descent inequality, as well as bounds on sums of the gaps.

\begin{assumption}
    \label{ass:fb:descent}
    $M \in \linear(X; X^*)$ is self-adjoint and positive semi-definite.
    Also,
    \begin{enumerate}[label=(\roman*)]
        \item\label{item:fb:descent:xk}
        For a set $\localset \subset X$, $\eta>0$, and $\linear(X; X^*) \ni \LL \le 2(1-\eta)M$, for any $k \in \N$, whenever $\{x^n\}_{n=0}^k \subset \localset$, for some errors $\errDesc{k} \in \R$, we have
        \begin{equation}
            \label{eq:fb:descent:xk}
            \dualprod{\totalest{k+1}}{\nextx-\this x}_{X^*,X}
            \ge
            \gap(\nextx; \thisx)
            - \frac{1}{2}\norm{\nextx - \this x}_{\LL}^2
            - \errDesc{k}.
        \end{equation}
        \item\label{item:fb:descent:error}
        The errors satisfy
        $\sup_{N \in \N} \sum_{k=0}^{N-1} \errDesc{k} \le \rDesc$ for some $\rDesc < \infty$.
        \item\label{item:fb:descent:coercitivity}
        We have $x^0 \in \localset$, and for any $N \ge 1$, $\sum_{k=0}^{N-1}\gap(\nextx; \thisx) \le \rDesc$ implies $x^N \in \localset$.
        \item\label{item:fb:descent:lb}
        For some $0 \le \tilde\eta < \eta$, we have
        \[
            \inf_{N \in \N} \sum_{k=0}^{N-1}\left(
                \gap(\nextx; \thisx)
                + \tilde\eta \norm{\nextx-\thisx}_M^2
            \right) =: - C_\gap > - \infty.
        \]
    \end{enumerate}
\end{assumption}

\begin{remark}
    \label{rem:fb:global}
    If $\localset=X$, convergence will be global.
    In the examples of \cref{sec:inner-adjoint}, $\localset \ne X$ may arise from $S_u$, $G$, or $J$ being only locally Lipschitz continuously differentiable.
\end{remark}

We will also need the approximate subdifferentials $\totalest{k+1}$ to become better as the distance between the iterates shrinks, in the sense of

\begin{assumption}
    \label{ass:fb:approx-cont}
    For $H$ defined in \eqref{eq:fb:generic-problem}, we have
    \[
        \sup_{N \in \N} \sum_{k=0}^{N-1}\norm{\nextx - \thisx}_M^2 < \infty
        \implies
        \lim_{k \to \infty} \inf_{x_{k+1}^* \in H(\nextx)} \norm{x_{k+1}^*-\totalest{k+1}}_{X^*}^2 = 0.
    \]
\end{assumption}

We now introduce the notation
\begin{equation}
    \label{eq:fb:fakenorm}
    \fakenormsq{x}{\Gamma} \defeq \dualprod{\Gamma x}{x}_{X^*, X}
\end{equation}
for when $\Gamma \in \linear(X; X^*)$ may not be positive semi-definite, so that the notation $\norm{x}_\Gamma^2$ is not appropriate.

For function value and iterate convergence, we cannot work with just the iterates: we need to assume properties with respect to a base point $\basex \in X$, usually a solution. For iterate convergence, we assume at a $\basex \in \inv H(0)$ the three-point monotonicity type estimate (compare the proof of \cref{thm:tracking:inner-fb})
\begin{equation}
    \label{eq:fb:three-point-monotonicity:approx}
    \dualprod{\totalest{k+1}}{\nextx - \bar x}_{X^*,X} -\fakenormsq{\thisx - \bar x}{\GF}
    \ge
    \fakenormsq{\nextx - \bar x}{\GG} - \frac{1}{2}\norm{\nextx - \this x}_{\LL}^2 -\errMono{k}(\basex),
\end{equation}
for all $k \in \N$, \emph{whenever $\{x^n\}_{n=0}^k \subset \basexset$} for an open neighbourhood $\basexset$ of $\basex$, errors $\errMono{k}(\basex) \in \R$, self-adjoint $\GF, \GG \in \linear(X; X^*)$, and a positive semi-definite self-adjoint $\LL \in \linear(X; X^*)$.
We recall here that $H(\basex)$ is a set, and the notation \eqref{eq:fb:three-point-monotonicity:approx} means that the inequality holds for all elements of this set.
Note that, for a fixed $k \in \N$, we \emph{do not}, \emph{a priori}, require that $\nextx \in \basexset$.
This will be proved to hold \emph{a posteriori}.

For function value convergence, we need again a descent inequality similar to \eqref{eq:fb:descent:xk}, now instantiated at the base point $\basex$ instead of $\thisx$. That is, for all $k \in \N$, we assume for some errors $\errDescBar{k}(\basex) \in \R$ \emph{whenever $\{x^n\}_{n=0}^k \subset \basexset$} that
\begin{equation}
    \label{eq:fb:three-point-smoothness:approx}
    \dualprod{\totalest{k+1}}{\nextx-\basex}_{X^*,X}
    - \frac{1}{2}\fakenormsq{\thisx - \basex}{\GF}
    \ge
    \gap(\nextx;\basex)
    + \frac{1}{2}\fakenormsq{\nextx - \basex}{\GG}
    - \frac{1}{2}\norm{\nextx - \this x}_{\LL}^2
    - \errDescBar{k}(\basex).
\end{equation}
We write $\errDescBarEmph{k}(\basex) \defeq \errDescBar{k}(\basex)$ when we need to draw a distinction to \eqref{eq:fb:three-point-monotonicity:approx}.
These errors will need to have a finite sum, and we need to initialise close to $\basex$ with respect to the diameter of $\basexset$:

\begin{assumption}
    \label{ass:fb:non-escape}
    Given $\basex \in X$, for some $\eta \ge 0$ and $p \ge 1$ satisfying $0 \le \LL \le (1-\eta) M$, either
    \begin{enumerate}[label=(\alph*)]
        \item \label{item:fb:non-escape:any}
        \eqref{eq:fb:three-point-monotonicity:approx} holds,
        $\basex \in \inv H(0)$, and $M+2\GG \ge p\MF$ with $\MF \defeq M-2\GF \ge 0$; or

        \item \label{item:fb:non-escape:min}
        \eqref{eq:fb:three-point-smoothness:approx} holds,
        $
            \inf_{x \in \basexset} \gap(x; \basex) \ge 0,
        $
        and
        $M+\GG \ge p\MF$ with $\MF \defeq M-\GF \ge 0$.
    \end{enumerate}
    Moreover, $x^0 \in \openball_{\MF}(\basex, \sqrt{\delta^2 - 2r_p})$ and $\openball_{\MF}(\basex, \delta) \subset \basexset$ for some $\delta>0$ with
    \begin{equation}
        \label{eq:fb:local:error-assumption}
        \frac{1}{2}\delta^2 > r_p \defeq \sup_{N \in \N} \sum_{k=0}^{N-1} p^{k-N} \err{k}(\basex) < \infty.
    \end{equation}
\end{assumption}

\begin{remark}
    Recall the three-point descent inequalities \cref{eq:weaker:descent3,eq:weaker:descent3:inexact}.
    Compared to \eqref{eq:fb:three-point-smoothness:approx}, they are missing the $\GF$-term on the left hand side: the second-order growth from $F$ is also on the right hand side, with respect to $\nextx-\basex$ instead of $\thisx-\basex$.
    Such an estimate depends on a costly Young inequality that \cref{eq:fb:three-point-monotonicity:approx,eq:fb:three-point-smoothness:approx} avoid.

    The “transition conditions” of the form $M+\GG \ge p\MF$ in \cref{ass:fb:non-escape} roughly correspond to the basic growth condition $\GG+\GF \ge (p-1)M$ while allowing the chaining of estimates in this more refined approach.
\end{remark}

\begin{example}[Growth conditions for basic forward-backward splitting]
    For basic forward-backward splitting in a Hilbert space,
    \eqref{eq:fb:three-point-monotonicity:approx} is exactly of the form \eqref{eq:inner:fb:growth}, proved in \cref{thm:tracking:inner-fb}.
    The function value counterpart \eqref{eq:fb:three-point-smoothness:approx} can be proved similarly.
    Note that $\GF$ does not, in that case, exactly correspond to the Lipschitz factor of $F$, although is related to it.
\end{example}

\Cref{thm:fb:growth:tracking} will demonstrate how to, more generally, derive \cref{eq:fb:three-point-monotonicity:approx,eq:fb:three-point-smoothness:approx} from individual operator-relative growth and smoothness properties of $F$ and $G$, which are introduced in \cref{sec:operator-reg}.

\subsection{Convergence of subdifferentials and quasi-monotonicity of values}
\label{sec:fb:subdiff}

We first show the potentially global convergence of subdifferentials; see \cref{rem:fb:global}.
When $\Xi=0$, this could be followed by the Kurdyka–{\L}ojasiewicz property to show function value convergence, and, afterwards, either by a growth condition or, in finite dimensions, a finite-length argument based on \eqref{eq:fb:almost-monotonicity} and \cite[proof of Lemma 2.6]{attouch2013convergence} to show iterate convergence. As the property can easily be verified only in finite dimensions (for semi-algebraic functions), we prefer a more direct approach.

\begin{theorem}
    \label{thm:fb:subdiff}
    If \cref{ass:fb:descent} holds, then $\thisx \in \localset$;
    \begin{equation}
        \label{eq:fb:almost-monotonicity}
        \gap(\nextx; \thisx)
        + \eta \norm{\nextx-\thisx}_M^2
        \le  \errDesc{k}
        \quad\text{for all}\quad
        k \in \N;
    \end{equation}
    as well as
    \begin{equation}
        \label{eq:fb:step-sum-bound}
        \sup_{N \in \N} \sum_{k=0}^{N-1}\norm{\nextx-\thisx}_{M}^2 < \frac{C_\gap + \rDesc}{\eta-\tilde\eta}.
    \end{equation}
    If, moreover, \cref{ass:fb:approx-cont} holds, then also $\inf_{x^* \in H(\nextx)} \norm{x^*}_{X^*} \to 0$
\end{theorem}

\begin{proof}
    By the implicit algorithm \eqref{eq:fb:implicit}, the properties of Fenchel conjugates (e.g., \cite[Lemma 5.7]{clason2020introduction})
    and $-M(\nextx-\thisx) =: \totalest{k+1} \in \subdiff\left(\frac{1}{2}\norm{\freevar}_M^2\right)(\nextx-\thisx)$, we have
    \begin{equation}
        \label{eq:fb:subdiff:conjugate-trick}
        (\norm{\freevar}_M^2)^*(2\totalest{k+1})
        =2\left(\frac{1}{2}\norm{\freevar}_M^2\right)^*(\totalest{k+1})
        =\norm{\nextx-\thisx}_M^2
        =-\dualprod{\totalest{k+1}}{\nextx-\thisx}_{X^*,X}.
    \end{equation}
    If $\{x^j\}_{j=0}^{N-1} \subset \localset$, \cref{ass:fb:descent}\,\cref{item:fb:descent:xk} thus yields for all $k=0,\ldots,N-1$ that
    \begin{equation}
        \label{eq:fb:subdiff:mono0}
        \begin{aligned}[t]
        \gap(\nextx; \thisx)
        &
        =
        \gap(\nextx; \thisx)
        -\dualprod{\totalest{k+1}}{\nextx-\thisx}_{X^*,X}
        - \norm{\nextx-\thisx}_{M}^2
        \\
        &
        \le
        \errDesc{k}
        -\frac{1}{2}\norm{\nextx-\thisx}_{2M-\LL}^2
        \le
        \errDesc{k}
         - \eta \norm{\nextx-\thisx}_M^2.
        \end{aligned}
    \end{equation}
    Summing over all such $k$, and using  \cref{ass:fb:descent}\,\cref{item:fb:descent:error}, it follows
    \begin{equation}
        \label{eq:fb:subdiff:mono1}
        \sum_{k=0}^{N-1} \gap(\nextx; \thisx)
        + \sum_{k=0}^{N-1} \eta (\norm{\freevar}_M^2)^*(2\totalest{k+1})
        =
        \sum_{k=0}^{N-1} \gap(\nextx; \thisx)
        + \sum_{k=0}^{N-1}  \eta \norm{\nextx-\thisx}_M^2
        \le
        \rDesc.
    \end{equation}
    From \cref{ass:fb:descent}\,\cref{item:fb:descent:coercitivity}, it now follows that $x^N \in \localset$. Since, by the same assumption, $x^0 \in \localset$, induction establishes  \eqref{eq:fb:almost-monotonicity} and $x^k \in \localset$ for all $k \in \N$.
    Using \cref{ass:fb:descent}\,\cref{item:fb:descent:lb} in \eqref{eq:fb:subdiff:mono1}, we, moreover, deduce \eqref{eq:fb:step-sum-bound} and  $(\norm{\freevar}_M^2)^*(2\totalest{k+1}) \to 0$.
    Let $c \ge \norm{M}_{\linear(X; X^*)}$.
    By $\norm{\freevar}_M^2 \le c\norm{\freevar}_X^2$ and the properties of conjugates (e.g., \cite[Lemmas 5.4 and 5.7]{clason2020introduction}),%
    \[
        \frac{4}{c}\norm{\totalest{k+1}}_{X^*}^2
        = c\norm{2\totalest{k+1}/c}_{X^*}^2
        =(c\norm{\freevar}_X^2)^*(2\totalest{k+1})
        \le (\norm{\freevar}_M^2)^*(2\totalest{k+1}).
    \]
    Thus also $\norm{\totalest{k+1}}_{X^*} \to 0$.
    \Cref{ass:fb:approx-cont} proves that $\inf_{x^* \in H(\nextx)} \norm{\totalest{k+1} - x^*}_{X^*} \to 0$.
    Hence an application of the triangle inequality establishes $\inf_{x^* \in H(\nextx)} \norm{x^*}_{X^*} \to 0$.
\end{proof}

\begin{remark}[Forward-backward splitting]
    For the (inexact) forward-backward splitting of \eqref{eq:fb:fb-inexact}, once we verify the relevant assumptions in \cref{cor:fb:tracking}, the previous theorem establishes the monotonicity of function values, as well as the convergence of subdifferentials to zero, $\inf_{x^* \in \subdiff G(\nextx)} \norm{F'(\nextx)+x^*} \to 0$.
\end{remark}

\subsection{Non-escape, quasi-Féjer monotonicity, linear convergence}
\label{sec:fb:non-escape}

The next lemma is essential for all our strong convergence results. The proof is standard; see, e.g., \cite[Chapter 15]{clason2020introduction} for the case $\err{k}(\basex)=0$ and $\Xi=0$.
Observe that \eqref{eq:fb:local:strong-quasi-fejer} with the triangle inequality may be used to again prove \cref{ass:tracking:main}\,\cref{item:tracking:main:inner-tracking} for multilevel methods.

\begin{lemma}
    \label{lemma:fb:non-escape}
    Suppose \cref{ass:fb:non-escape} holds at $\basex \in X$.
    Then $x^k \in \openball_{\MF}(\basex, \delta) \subset \basexset$ for all $k \in \N$, and the sequence is ($p$-strongly) quasi-Féjer, i.e.,
    \begin{equation}
        \label{eq:fb:local:strong-quasi-fejer}
        \frac{p}{2}\norm{\nextx-\basex}_{\MF}^2
        \le
        \frac{1}{2}\norm{\thisx-\basex}_{\MF}^2
        + \err{k}(\basex)
    \end{equation}
    Moreover, $\sup_{N \in \N} \sum_{k=0}^{N-1} p^{k-N} \norm{\nextx-\thisx}_M^2 \le \delta^2/\eta$ if $\eta>0$.
\end{lemma}

\begin{proof}
    We first treat \cref{ass:fb:non-escape} option \cref{item:fb:non-escape:any}.
    Fix $N \in \N$ and suppose $\{x^j\}_{j=0}^{N-1} \subset \basexset$.
    Observe that $\dualprod{\Xi x}{x}=0$ for all $x \in X$ by the skew-adjointness of $\Xi$.
    Since $0 \in H(\basex)$, using \eqref{eq:fb:three-point-monotonicity:approx} in the implicit algorithm \eqref{eq:fb:implicit}, we thus get
    \[
        \begin{aligned}[t]
        - \fakenormsq{\thisx - \basex}{\GF}
        -\dualprod{M(\nextx-\thisx)}{\nextx-\basex}_{X^*,X}
        &
        \ge
        \fakenormsq{\nextx - \basex}{\GG}
        - \frac{1}{2}\norm{\nextx - \thisx}_{\LL}^2
        - \err{k}(\basex)
        \end{aligned}
    \]
    for all $k \in \{0,\ldots,N-1\}$.
    By the Pythagoras' identity \eqref{eq:norms:pythagoras},
    \[
        \frac{1}{2}\fakenormsq{\thisx - \basex}{M-2\GF}
        \ge
        \frac{1}{2}\fakenormsq{\nextx - \basex}{M+2\GG}
        + \frac{1}{2}\norm{\nextx - \thisx}_{M - \LL}^2
        - \err{k}(\basex).
    \]
    By  $\LL \le (1- \eta) M$ and $M+2\GG \ge p\MF$ for $\MF = M-2\GF \ge 0$, we obtain
    \begin{equation}
        \label{eq:fb:local:step-estimate:0}
        \frac{1}{2}\norm{\thisx-\basex}_{\MF}^2
        \ge
        \frac{\eta}{2}\norm{\nextx-\thisx}_{M}^2
        + \frac{p}{2}\norm{\nextx-\basex}_{\MF}^2
        - \err{k}(\basex).
    \end{equation}
    Multiplying by $p^k$, and summing over $k=0,\ldots,N-1$ yields
    \begin{equation}
        \label{eq:fb:local:step-estimate:0:summed}
        \frac{1}{2}\norm{x^0-\basex}_{\MF}^2
        + \sum_{k=0}^{N-1} p^k \err{k}(\basex)
        \ge
        \sum_{k=0}^{N-1} \frac{\eta p^k}{2}\norm{\nextx-\thisx}_{M}^2
        + \frac{p^N}{2}\norm{x^N-\basex}_{\MF}^2.
    \end{equation}
    Multiplying by $p^{-N} \le 1$ and using $x^0 \in \openball_{\MF}(\basex, \sqrt{\delta^2 - 2r_p})$ and \eqref{eq:fb:local:error-assumption}, it follows
    \begin{equation}
        \label{eq:fb:local:step-estimate:0:summed:est}
        \frac{\delta^2}{2}
        =
            \frac{\delta^2-2r_p}{2}
            + r_p
        >
        \sum_{k=0}^{N-1} \frac{\eta p^{k-N}}{2}\norm{\nextx-\thisx}_{M}^2
        +
        \frac{1}{2}\norm{x^N-\basex}_{\MF}^2.
    \end{equation}
    Hence $x^N \in \openball_{\MF}(\basex, \delta)$.
    Since $x^0 \in \basexset$ by \cref{ass:fb:non-escape}, an inductive argument shows that $\this{x} \in \openball_{\MF}(\basex, \delta) \subset \basexset$ for all $k \in \N$, justifying the above steps.
    Finally, \eqref{eq:fb:local:step-estimate:0} shows \eqref{eq:fb:local:strong-quasi-fejer}, while the claim $\sup_{N \in \N} \sum_{k=0}^{N-1} p^{k-N} \norm{\nextx-\thisx}_{M}^2 < \delta^2/\eta$ follows from \eqref{eq:fb:local:step-estimate:0:summed:est} and $\eta>0$.

    Regarding option \cref{ass:fb:non-escape}\,\cref{item:fb:non-escape:min}, arguing as above with \eqref{eq:fb:three-point-smoothness:approx} in place of  \eqref{eq:fb:three-point-monotonicity:approx}, we get in place of \eqref{eq:fb:local:step-estimate:0} the estimate
    \begin{equation}
        \label{eq:fb:local:step-estimate}
            \frac{1}{2}\norm{\thisx-\basex}_{\MF}^2
            \ge
            \gap(\nextx; \basex)
            + \frac{\eta}{2}\norm{\nextx-\thisx}_{M}^2
            + \frac{1+\gamma}{2}\norm{\nextx-\basex}_{\MF}^2
            - \err{k}(\basex),
    \end{equation}
    where now $\MF = M - \GF \ge 0$.
    Using $\inf_{x \in \openball_{\MF}(\delta, \basex)} \gap(x; \basex) \ge 0$, we proceed as in option \cref{item:fb:non-escape:any} to establish \eqref{eq:fb:local:step-estimate:0:summed:est}, and from there onwards.
\end{proof}

A closer look at \eqref{eq:fb:local:step-estimate:0:summed} yields linear convergence if $p>1$ and we remove $p^{-N}$ from \eqref{eq:fb:local:error-assumption}.

\begin{corollary}
    \label{cor:fb:local:linear}
    Suppose \cref{ass:fb:non-escape} holds at $\basex \in X$ with $p>1$ and the inequality in \eqref{eq:fb:local:error-assumption} strengthened to
    \begin{equation}
        \label{eq:fb:local:error-assumption:strong}
        \frac{1}{2}\delta^2 > \sup_{N \in \N} \sum_{k=0}^{N-1} p^k \err{k}(\basex) < \infty.
    \end{equation}
    Then $\norm{x^N-\basex}_{\MF}^2 \to 0$ at the rate $O(p^{-N})$.
\end{corollary}

\subsection{Local convergence of function values}
\label{sec:fb:values}

We now proceed to function values and duality gaps.
The idea of possibly assuming both \cref{ass:fb:non-escape}\,\cref{item:fb:non-escape:any} and a relaxed version of \cref{item:fb:non-escape:min}, as an alternative to just the latter, is to be able to study descent at non-minimising critical points.
For simplicity, we only treat sublinear convergence.

\begin{theorem}
    \label{thm:fb:local:values}
    Suppose \cref{ass:fb:non-escape} holds at $\basex \in X$ and, for a non-empty set $\hat X \subset X$, \cref{eq:fb:three-point-smoothness:approx} holds for all $\hat x \in \hat X$ with $\LL = \LL_{\hat x} \le M$, $\gamma=\gamma_{\hat x} \ge 0$, and $\Omega_{\hat x} \supset \openball_{\MF}(\basex, \delta)$.
    Then
    \begin{equation}
        \label{eq:fb:local:values:ergodic}
        \sup_{\hat x \in \hat X} \sum_{k=0}^{N-1} \gap(\nextx; \hat x)
        \le
        \sup_{\hat x \in \hat X}\left(
            \frac{1}{2}\norm{x^0-\hat x}_{\MF}^2
            + \sum_{k=0}^{N-1}  \errDescBarEmph{k}(\hat x)
        \right)
        \quad\text{for all}\quad N \in \N.
    \end{equation}

    If $\Xi=0$ and \cref{ass:fb:descent} holds\footnote{Since the proof of the present \cref{thm:fb:local:values} shows that $\thisx \in \openball_M(\optx, \delta)$ for all $k \in \N$, to prove the required \eqref{eq:fb:almost-monotonicity}, it would be enough to assume that just \cref{ass:fb:descent}\,\cref{item:fb:descent:xk} holds with $\localset \supset \openball_M(\optx, \delta)$.}, then, for all $N \in \N$,
    \begin{equation}
        \label{eq:fb:local:values:non-ergodic}
        [F+G](x^N)
        \le
        \inf_{\hat x \in \hat X} [F+G](\hat x)
        + \sup_{\hat x \in \hat X} \left(
            \frac{1}{2N}\norm{x^0-\hat x}_{\MF}^2
            + \sum_{k=0}^{N-1} \left(
                \frac{1}{N}\errDescBarEmph{k}(\hat x)
                +
                \frac{k+1}{N}\errDesc{k}
            \right)
        \right).
    \end{equation}
\end{theorem}

\begin{proof}
    \Cref{lemma:fb:non-escape} shows for all $k \in \N$ that $\thisx \in \openball_{\MF}(\basex, \delta) \subset \Isect_{\hat x \in \hat X} \Omega_{\hat x}$.
    Hence, for any $\hat x \in \hat X$, we may follow the proof of the lemma for case \cref{item:fb:non-escape:min} of \cref{ass:fb:non-escape} to establish \eqref{eq:fb:local:step-estimate} for $\basex=\hat x$.
    To reach this point, the assumption $\inf_{x \in \openball_{\MF}(\delta, \basex)} \gap(x; \basex) \ge 0$ was not yet needed.
    Now, summing \eqref{eq:fb:local:step-estimate} over $k=0,\ldots,N-1$, we obtain
    \begin{equation}
        \label{eq:fb:local:values:main-estimate-in-proof}
        \frac{1}{2}\norm{x^0-\hat x}_{\MF}^2
        + \sum_{k=0}^{N-1}  \errDescBarEmph{k}(\hat x)
        \ge
        \sum_{k=0}^{N-1} \gap(\nextx; \hat x)
        + \frac{1}{2}\norm{x^N-\hat x}_{\MF}^2.
    \end{equation}
    Taking the supremum over $\hat x \in \hat X$, and using $\MF \ge 0$, this establishes \eqref{eq:fb:local:values:ergodic}.

    Suppose then that $\Xi=0$ and \cref{ass:fb:descent} holds.
    \Cref{thm:fb:subdiff} now establishes \eqref{eq:fb:almost-monotonicity}, i.e., the quasi-monotonicity
    $
        [F+G](\nextx) \le [F+G](\thisx) + \errDesc{k}.
    $
    Repeatedly using this and $\gap(\nextx; \hat x)=[F+G](\nextx)-[F+G](\hat x)$ in \eqref{eq:fb:local:values:main-estimate-in-proof}, and dividing by $N$, we obtain \eqref{eq:fb:local:values:non-ergodic}.
\end{proof}

\subsection{Weak convergence}
\label{sec:fb:weak}

We next prove the weak convergence of the iterates.
We call the self-adjoint and positive semi-definite preconditioner $M \in \linear(X; X^*)$ \term{admissible for weak convergence} if $\norm{\thisx}_M \to 0$ implies $M\thisx \to 0$.

\begin{example}
    Let $M=A^*A$ for some $A \in \linear(X; V)$ with $V$ a Hilbert space.
    Then $\norm{\thisx}_M \to 0$ implies $A\thisx \to 0$, and consequently $M\thisx \to 0$.
    Thus, $M$ is weakly admissible.
    In Hilbert spaces, every positive semi-definite self-adjoint operator has such a square root $A$ with $V=X$.
    For a convolution-based construction in the space of Radon measures, see \cite[Theorem 2.4]{tuomov-pointsource}.
\end{example}

\begin{theorem}
    \label{thm:fb:local:weak}
    Suppose \cref{ass:fb:non-escape,ass:fb:approx-cont} hold with $p=1$ and $\eta>0$ at some $\basex=\optx \in \inv H(0)$, and that either \cref{ass:fb:non-escape}\,\cref{item:fb:non-escape:any} or \cref{item:fb:non-escape:min} (only the item, not the entire assumption) holds with $\openball_{\MF}(\basex, \delta) \subset \Omega_{\hat x}$ and $\sum_{k=0}^\infty \err{k}(\hat x) < \infty$ at all $\hat x \in \hat X \defeq \inv H(0) \isect \openball_M(\basex, \delta)$.
    Also suppose that the preconditioner $M$ is admissible for weak convergence, $\MF$ is strictly monotone, and $F$ is either convex or $F'$ is weak-to-strong continuous.
    Then $\thisx \weakto \hat x$ weakly for some $\hat x \in \hat X$.
\end{theorem}

\begin{proof}
    \Cref{lemma:fb:non-escape} proves that $\thisx \in \openball_{\MF}(\basex, \delta)$ for all $k \in \N$, as well as that $\sup_{N \in N} \sum_{k=0}^{N-1} \norm{\nextx-\thisx}_M^2 < \infty$.
    The latter establishes $\norm{\nextx-\thisx}_M \to 0$, and through admissibility for weak convergence, and \eqref{eq:fb:implicit}, that $\totalest{k+1} = -M(\nextx-\thisx) \to 0$ strongly in $X^*$.
    Moreover, \cref{ass:fb:approx-cont} yields $\norm{\totalest{k+1} - x^*_{k+1}}_{X^*} \to 0$ for some $x^*_{k+1} \in H(\nextx)$.
    Consequently $x^*_{k+1} \to 0$.
    Since $\thisx \in \openball_{\MF}(\basex, \delta) \subset \Omega_{\hat x}$, \cref{lemma:fb:non-escape}, shows the quasi-Féjer monotonicity \eqref{eq:fb:local:strong-quasi-fejer} (with $p=1$) for all $\hat x \in \hat X$ and $k \in \N$.

    Suppose then that $x^{k_j+1} \weakto \hat x$ for a subsequence $\{k_j\}_{j \in \N} \subset \N$ and a $\hat x \in X$. We want to show that $\hat x \in \hat X$.
    We consider two cases:
    \begin{enumerate}
        \item If $F$ is convex, $H$ is maximally monotone\footnote{That the additive skew-adjoint term $\Xi$ does not destroy maximal monotonicity, can be proved completely analogously to the Hilbert space case in \cite[Lemma 9.9]{clason2020introduction}.\label{foot:fb:skew-adjoint-max-monotone}}, hence weak-to-strong outer semicontinuous \cite[Lemma 6.10]{clason2020introduction}.
        Now $x^{k_j+1} \weakto \hat x$ and $H(x^{k_j+1}) \ni x^*_{k_j+1} \to 0$ obliges $0 \in H(\hat x)$.

        \item Suppose then that $F'$ is weak-to-strong continuous. Now still $P: x \mapsto \subdiff G(x) + \Xi x$ is maximally monotone\textsuperscript{\ref{foot:fb:skew-adjoint-max-monotone}}, hence weak-to-strong outer semicontinuous.
        We have
        $P(x^{k_j+1}) \ni x^*_{k_j+1} - F'(x^{k_j+1}) \to -F'(\hat x)$ strongly in $X^*$, as well as $x^{k_j+1} \weakto \hat x$, so we must have $-F'(\hat x) \in P(\hat x)$. But this again says $0 \in H(\hat x)$.
    \end{enumerate}
    Thus every weak limiting point $\hat x$ of $\{\thisx\}_{k \in \N}$ satisfies $0 \in H(\hat x)$.
    But, since $\thisx \in \openball_M(\basex, \delta)$ for all $k \in \N$, also $\hat x \in \openball_M(\basex, \delta)$.
    This proves that $\hat x \in \hat X$.
    Since, by assumption, $\sum_{k=0}^\infty \err{k}(\hat x) < \infty$ for all $\hat x \in \hat X$, the quasi-Féjer monotonicity \eqref{eq:fb:local:strong-quasi-fejer} with the quasi-Opial's \cref{lemma:opial} finishes the proof.
\end{proof}

\begin{example}
    In the setting of \cref{sec:tracking} and \cref{thm:weaker:erroneous-lipschitz}, the weak-$*$-to-strong continuity of $F'$ can be achieved, for example, when $F(x)=\frac{1}{2}\norm{S(x)-b}^2$ for a Lipschitz and bounded $S$ with finite-dimensional range.
\end{example}

\begin{remark}
    All of our theory also applies when $X$ is the dual space of a separable normed space $X_*$, and we replace in our definitions $X^*$ by the predual space $X_*$, that is, subdifferentials are subsets of $X_*$, and $M, \Lambda \in \linear(X; X_*)$, etc.
    With this change the theory applies, for example, to $X$ a space of Radon measures, as in \cite{tuomov-pointsource}.
    Then \cref{thm:fb:local:weak} proves the weak-$*$ convergence.
\end{remark}

\section{Operator-relative regularity}
\label{sec:operator-reg}

We now introduce operator-relative smoothness and growth concepts to facilitate the analysis of
\begin{enumerate}[nosep]
    \item primal-dual methods as generalised forward-backward methods, and
    \item the basic forward-backward method for \eqref{eq:fb:problem} when neither $F$ nor $G$ alone provides second-order growth on the whole space $X$, but jointly they do.
\end{enumerate}
We start with the relevant definitions in \cref{sec:operator-reg:def}, and then prove the relevant operator-relative descent inequalities and three-point monotonicity in \cref{sec:opetor-reg:estimates}.

\subsection{Definitions}
\label{sec:operator-reg:def}

For a self-adjoint positive semi-definite $\Lambda \in \linear(X; X^*)$ on a normed space $X$, we say that the Gâteaux derivative $DF$ of $F: X \to \R$ is $\Lambda$-$*$-Lipschitz if
\[
    \norm{DF(z)-DF(x)}_{\Lambda,*} \le \norm{x-z}_\Lambda
    \quad(x, z \in X).
\]
We say that $DF$ is $\Lambda$-$*$-cocoercive, if
\[
    \norm{DF(z)-DF(x)}_{\Lambda,*}^2 \le \dualprod{DF(z)-DF(x)}{z-x}_{X^*,X}.
\]

\begin{remark}
    These properties could be defined for arbitrary seminorms $\norm{\freevar}_\circ$ and corresponding dual support functions $\norm{\freevar}_*$, however, since we will be combining the estimates of this section with the Pythagoras' identity in this the next one, we restrict our attention to operator-generated instances.
\end{remark}

\begin{example}
    \label{ex:operator-reg:hilbert}
    On a Hilbert space $X$, take $\Lambda=L\mathscr{I}$ for the standard injection $\mathscr{I}: X \to X^*$.
    Then $\norm{\freevar}_{\Lambda,*} = L^{-1/2}\norm{\freevar}_{X^*}$, so these concepts reduce to standard $L$-Lipschitz and $\inv L$-cocoercivity properties.
    The two are equivalent \cite[Lemma 7.1]{clason2020introduction}.
\end{example}

The following lemma lists important implications.

\begin{lemma}
    \label{lemma:opeartor-reg:coco-lipschitz}
    $\Lambda$-$*$-cocoercive
    $\implies$ $\Lambda$-$*$-Lipschitz
    $\implies$ $\dualprod{DF(z)-DF(x)}{z-x}_{X^*,X} \le \norm{z-x}_\Lambda^2$.
    Moreover, $\Lambda$-$*$-Lipschitz is equivalent to $\dualprod{DF(z)-DF(x)}{h}_{X^*,X} \le \frac{1}{2}\norm{h}_\Lambda^2 + \frac{1}{2}\norm{z-x}_\Lambda^2$ holding for all $h \in X$, and
    $\Lambda$-$*$-co-coercivity is equivalent to $\dualprod{DF(z)-DF(x)}{2h-(z-x)}_{X^*,X} \le \norm{h}_\Lambda^2$ holding for all $h \in X$.
\end{lemma}

\begin{proof}
    \Cref{lemma:semicon:props}\,\cref{item:semicon:props:young} gives
    $
        \dualprod{DF(z)-DF(x)}{z-x}_{X^*,X}
        \le
        \frac{1}{2}\norm{DF(z)-DF(x)}_{\Lambda,*}^2 + \frac{1}{2}\norm{z-x}_\Lambda^2.
    $
    Using co-coercivity in the left hand side and rearranging gives the first implication.
    Using the $\Lambda$-$*$-Lipschitz property on the right hand side and rearranging gives the second implication.
    The equivalences hold by \cref{lemma:semicon:props}\,\cref{item:semicon:props:conjugacy} and the definition of the Fenchel conjugate.
\end{proof}

One reason for introducing these concepts is to allow functions such as $F$ in \eqref{eq:fb:pdps:functions} to have distinct Lipschitz factors on distinct subspaces.
For the same reason, recalling the notation $\fakenormsq{\freevar}{\Gamma}$ from \eqref{eq:fb:fakenorm}, we call $DF$ locally $\Gamma$-monotone in $\localset \ni \bar x$ for a self-adjoint $\Gamma \in \linear(X; X^*)$ if
\[
    \dualprod{DF(z)-DF(\bar x)}{z-\bar x} \ge \fakenormsq{z-\bar x}{\Gamma}
    \quad
    (z \in \Omega).
\]
We do not at this stage assume $\Gamma$ to be positive semi-definite.
The main reason for allowing non-positive semi-definite $\Gamma$ is to treat sums of functions $F+G$ that satisfy $\Gamma_F+\Gamma_G \ge 0$ while, e.g., $\Gamma_F \not\ge 0$.

Finally, we call a possibly nonsmooth function $G$ $\Gamma$-subdifferentiable and the (convex) subdifferential $\subdiff G$ $\Gamma$-monotone if, respectively,
\begin{equation}
    \label{eq:operator-reg:subdiff}
    G(\tilde x) - G(x) \ge \dualprod{q}{\tilde x - x} + \frac{1}{2}\fakenormsq{\tilde x-x}{\Gamma}
    \quad\text{or}\quad
    \dualprod{\tilde q-q}{\tilde x-x} \ge \fakenormsq{\tilde x-x}{\Gamma}
\end{equation}
for all $q \in \subdiff G(x)$; $\tilde q \in \subdiff \tilde G(x)$, and $x, \tilde x \in X$.
Obviously, the former implies the latter.

\subsection{Estimates}
\label{sec:opetor-reg:estimates}

We first prove a $\Lambda$-$*$-Lipschitz descent lemma, as a generalisation of the basic descent inequality \eqref{eq:weaker:descent}.

\begin{lemma}
    \label{lemma:smoothness:descent}
    On a normed space $X$, suppose $F: X \to \R$ has a  $\Lambda$-$*$-Lipschitz Gâteaux derivative for a self-adjoint positive semi-definite $\Lambda \in \linear(X; X^*)$.
    Then
    \begin{equation}
        \label{eq:smoothness:descent-operator}
         F(x) - F(z) - \dualprod{DF(z)}{x-z}_{X^*,X}
        \le
        \frac{1}{2}\norm{z-x}_\Lambda^2.
    \end{equation}
\end{lemma}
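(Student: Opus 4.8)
The plan is to reduce the claim to a one–dimensional statement along the segment from $z$ to $x$ and then invoke the fundamental theorem of calculus. Concretely, I would fix $x, z \in X$, set $h_0 \defeq x - z$, and define $\phi: [0,1] \to \R$ by $\phi(t) \defeq F(z + t h_0)$. Since $F$ is Gâteaux differentiable, $\phi$ is differentiable on $[0,1]$ with $\phi'(t) = \dualprod{DF(z + t h_0)}{h_0}_{X^*,X}$. The point worth checking is that $\phi'$ is continuous: by \eqref{eq:operator-reg:firm-lipschitz:impl} the $\Lambda$-firm Lipschitzness implies $DF$ is $\norm{\Lambda}$-Lipschitz as a map $X \to X^*$, so $t \mapsto DF(z + t h_0)$ is Lipschitz, hence $\phi' \in C([0,1])$ and $\phi \in C^1$. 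Therefore the fundamental theorem of calculus gives
\[
    F(x) - F(z) = \phi(1) - \phi(0) = \int_0^1 \dualprod{DF(z + t h_0)}{h_0}_{X^*,X} \d t.
\]

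Subtracting $\dualprod{DF(z)}{h_0}_{X^*,X} = \int_0^1 \dualprod{DF(z)}{h_0}_{X^*,X} \d t$ from both sides, I obtain
\[
    F(x) - F(z) - \dualprod{DF(z)}{x-z}_{X^*,X}
    = \int_0^1 \dualprod{DF(z + t h_0) - DF(z)}{h_0}_{X^*,X} \d t.
\]
Now I apply the definition of $\Lambda$-firm Lipschitzness at the points $z + t h_0$ and $z$, with the test direction $h = h_0 = x - z$: this yields
\[
    \dualprod{DF(z + t h_0) - DF(z)}{h_0}_{X^*,X}
    \le \norm{(z + t h_0) - z}_\Lambda \norm{h_0}_\Lambda
    = t \norm{x - z}_\Lambda^2 .
\]
Integrating over $t \in [0,1]$ and using $\int_0^1 t \d t = \tfrac12$ gives exactly \eqref{eq:smoothness:descent-operator}.

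The only genuine obstacle is the justification of the fundamental theorem of calculus for a merely Gâteaux differentiable $F$, since Gâteaux differentiability does not by itself license term-by-term integration of the directional derivative; but this is precisely where the firm Lipschitz hypothesis pays off, via the implication in \eqref{eq:operator-reg:firm-lipschitz:impl} that $DF$ is norm-continuous, which upgrades $\phi$ to a $C^1$ function on $[0,1]$. Everything else is the routine one-dimensional computation above, and no positive semi-definiteness of anything beyond the stated hypothesis on $\Lambda$ is needed.
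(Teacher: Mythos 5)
Your proof is correct and follows the same route as the paper's: integrate the directional derivative along the segment from $z$ to $x$ and bound the integrand via the $\Lambda$-firm Lipschitz property, then integrate $t \mapsto t$. The only (welcome) difference is that you explicitly justify the applicability of the fundamental theorem of calculus by observing that \eqref{eq:operator-reg:firm-lipschitz:impl} makes $DF$ norm-Lipschitz and hence $\phi$ of class $C^1$, a point the paper glosses over with the phrase ``by the mean value theorem.''
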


\begin{proof}
    By the mean value theorem and \cref{lemma:opeartor-reg:coco-lipschitz}
    \[
        F(x) - F(z) - \dualprod{DF(z)}{x-z}_{X^*,X}
        = \int_0^1 \dualprod{DF(z+t(x-z))-DF(z)}{x-z} \d t
        \le \int_0^1 t \norm{x-z}_\Lambda^2 \d t.
    \]
    Integrating, the claim follows.
\end{proof}

We now move on to three-point estimates.
The first lemma provides a tool for proving \eqref{eq:fb:three-point-smoothness:approx}, and the second one for proving \eqref{eq:fb:three-point-monotonicity:approx}.
It is important that $x$ ($=\nextx$ in the application to forward steps at $\thisx$) is not, a priori, restricted to the neighbourhood $\localset$ of $\Gamma$-monotonicity at $\bar x$.

For compactness of our overall presentation, besides the smooth function $F$, we include an additional subdifferentiable function $G$ and a skew-adjoint operator $\Xi$, which could always be taken as zero.

\begin{lemma}
    \label{lemma:smoothness:nonconvex}
    On a normed space $X$, let $F: X \to \R$ and suppose $DF$ is $\Lambda$-$*$-Lipschitz and $\Gamma_F$-monotone at $\bar x \in X$ in a convex neighbourhood $\localset \ni \basex$  for some self-adjoint and positive semi-definite $\Lambda \in \linear(X; X^*)$ and a self-adjoint $\Gamma_F \in \linear(X; X^*)$.
    Also suppose that $G: X \to \extR$ is $\Gamma_G$-strongly subdifferentiable for a self-adjoint $\Gamma_G \in \linear(X; X^*)$, and $\Xi \in \linear(X; X^*)$ is skew-adjoint.
    Then, for any $\beta>0$, for all $z \in \localset$, $x \in X$, we have
    \[
        \dualprod{DF(z) + \subdiff G(x) + \Xi x}{x-\bar x}
        - \frac{1}{2}\fakenormsq{\bar x-z}{\Gamma_F}
        \ge
        \gap(x; \bar x)
        + \frac{1}{2}\fakenormsq{x-\bar x}{\Gamma_G}
        - \frac{1}{2}\normsq{x-z}{\Lambda}
    \]
    where $\gap$ is defined in \eqref{eq:fb:general:gap}.
\end{lemma}

\begin{proof}
    Similarly to the proof of the descent inequality in \cref{lemma:smoothness:descent}, the mean value theorem applied to $\phi(t) \defeq F(\bar x + t(z-\bar x))$, followed by the assumed local $\Gamma_F$-monotonicity of $DF$, establishes
    \[
        \begin{aligned}
        F&(\bar x) - F(z) - \dualprod{DF(z)}{\bar x-z}_{X^*,X}
        \\
        &
        = \int_0^1 \dualprod{DF(z+t(\bar x-z))-DF(z)}{\bar x-z} \d t
        \ge \int_0^1 t \fakenormsq{\bar x-z}{\Gamma_F} \d t
        = \frac{1}{2}\fakenormsq{\bar x-z}{\Gamma_F}.
        \end{aligned}
    \]
    Summing this inequality with the descent inequality of \cref{lemma:smoothness:descent}, we obtain
    \[
        \dualprod{DF(z)}{x-\bar x} - \frac{1}{2}\fakenormsq{\bar x-z}{\Gamma_F}
        \ge
        F(x)-F(\bar x)
        - \frac{1}{2}\normsq{x-z}{\Lambda}.
    \]
    By the skew-symmetricity of $\Xi$, we have $\dualprod{\Xi x}{x-\bar x} = \dualprod{\Xi x}{\bar x}$.
    The claim follows from summing this expression with the previous inequality and the first part of \eqref{eq:operator-reg:subdiff} for $G$.
\end{proof}

\begin{lemma}
    \label{lemma:smoothness:monotonicity}
    On a normed space $X$, let $F: X \to \R$ and suppose $DF$ is $\Lambda$-$*$-co-coercive and $\Gamma_F$-monotone in a neighbourhood $\localset \ni \basex$ of some $\basex \in X$ for some self-adjoint and positive semi-definite $\Lambda \in \linear(X; X^*)$ and a self-adjoint $\Gamma_F \in \linear(X; X^*)$.
    Also suppose that $G: X \to \extR$ has a $\Gamma_G$-monotone subdifferential for some self-adjoint $\Gamma_G \in \linear(X; X^*)$, that $\Xi \in \linear(X; X^*)$ skew-adjoint, and that
    \begin{equation}
        \label{eq:smoothness:monotonicity:nonconvex:combined:oc}
        x^* \in DF(\bar x) + \subdiff G(\bar x) + \Xi \bar x.
    \end{equation}
    Then, for any $\beta>0$ and $\zeta \in (0, 1]$, for all $z \in \localset$ and $x \in X$, we have
    \[
        \dualprod{DF(z) + \subdiff G(x) + \Xi x - x^*}{x-\bar x}_{X^*,X}
        - (1-\zeta) \fakenormsq{z-\bar x}{\Gamma_F}
        \ge
        \fakenormsq{x-\bar x}{\Gamma_G}
        - \frac{1}{4\zeta} \normsq{x-z}{\Lambda}.
    \]
\end{lemma}

\begin{proof}
    Interpolating between $\Gamma_F$-monotonicity and $\Lambda$-$*$-co-coercivity, and using \cref{lemma:semicon:props}\,\cref{item:semicon:props:young},
    \[
        \begin{aligned}[t]
        \dualprod{DF(z)-DF(\bar x)&}{x-\bar x}_{X^*,X}
        =
        \dualprod{DF(z)-DF(\bar x)}{z-\bar x}_{X^*,X}
        +
        \dualprod{DF(z)-DF(\bar x)}{x-z}_{X^*,X}
        \\
        &
        \ge
        (1-\zeta) \fakenormsq{z-\bar x}{\Gamma_F}
        +\zeta \norm{DF(z)-DF(\bar x)}_{\Lambda,*}^2
        +\dualprod{DF(z)-DF(\bar x)}{x-z}_{X^*,X}
        \\
        &
        \ge
        (1-\zeta) \fakenormsq{z-\bar x}{\Gamma_F} - \frac{1}{4\zeta}\normsq{x-z}{\Lambda}.
        \end{aligned}
    \]
    We have $\dualprod{\Xi(x-\bar x)}{x-\bar x}=0$.
    The claim follows from summing this expression, the previous inequality, \eqref{eq:smoothness:monotonicity:nonconvex:combined:oc}, and the first part of \eqref{eq:operator-reg:subdiff}.
\end{proof}

\section{Outer algorithms}
\label{sec:outer-examples}

We now explicitly verify \cref{ass:fb:descent,ass:fb:non-escape,ass:fb:approx-cont} for both basic forward-backward splitting and the PDPS, as well as their inexact versions based on the estimation of $F'(\thisx)$ by $\estdiff F(\thisx)$ formed using inner and adjoint algorithms satisfying the tracking theory of \cref{sec:tracking}.
We first provide in \cref{sec:outer-examples:general} a general result for operator-relative forward-backward type methods for \eqref{eq:fb:generic-problem}.
This forms the basis of treatment of both the basic forward-backward splitting in \cref{sec:outer-examples:fb}, and primal-dual proximal splitting in \cref{sec:outer-examples:pdps}.

\subsection{A general result}
\label{sec:outer-examples:general}

Let $\Lambda \in \linear(X; X^*)$ be self-adjoint positive and semi-definite.
To use the tracking theory of \cref{sec:tracking}, recalling \cref{sec:semicon}, we make the choices
\begin{equation}
    \label{eq:fb:distances}
    \dX=\norm{\freevar}_\Lambda,
    \quad\text{and}\quad
    \dXstar=\norm{\freevar}_{\Lambda,*}.
\end{equation}
The main reason for restricting the semi-norms to the operator form, is that we require $\norm{\freevar}_\circ \le c \norm{\freevar}_M$, i.e., $\Lambda \le cM$, for some $c \ge 0$.
We make no restrictions on $\dU$ and $\dW$, which have no direct role in this section.

In brief, the next theorem says that
\begin{enumerate}
    \item \Cref{ass:fb:descent,ass:fb:approx-cont}, used for subdifferential convergence by \cref{thm:fb:subdiff}, require that the initialisation $x^0$ be in the set $\localset$ where the tracking assumptions hold, and that the step lengths (encoded in $M$) be small compared to the operator-relative Lipschitz factor $\Lambda$.
    \item \Cref{ass:fb:non-escape}, used for stronger convergence results by \cref{cor:fb:local:linear,thm:fb:local:values,thm:fb:local:weak}, also requires sufficient strong subdifferentiability.
\end{enumerate}

\begin{theorem}
    \label{thm:fb:growth:tracking}
    On a normed space $X$, let $M, \Lambda \in \linear(X; X^*)$ be self-adjoint and positive semi-definite.
    Suppose $F: X \to \R$ has a $\Lambda$-$*$-Lipschitz Fréchet derivative in $\localset \subset X$, and $G: X \to \extR$ is convex, proper, and lower semicontinuous.
    Given an initial $x^0 \in X$, for all $k \in \N$, construct $\estdiff F(\thisx)$ obeying \cref{ass:tracking:main} (or, more generally, \cref{thm:weaker:smoothness,thm:weaker:erroneous-lipschitz} without \cref{ass:tracking:main}) in $\localset$ for the distances \eqref{eq:fb:distances}.
    Update $\nextx$ by solving
    \begin{equation}
        \label{eq:fb:implicit:thm}
        0 \in \estdiff F(\thisx) + \subdiff G(\nextx) + \Xi\nextx + M(\nextx-\thisx).
    \end{equation}
    Let $\trackingressum$, $\kappa$, $e_{p,k}$, and $\Psi_p$ be as in \cref{thm:weaker:smoothness}.
    Then,
    \begin{enumerate}[label=(\roman*)]
        \item\label{item:fb:growth:tracking:weak}
        \Cref{ass:fb:descent} holds for any $\eta>\tilde\eta\ge0$ and $p \in [1, \kappa)$ with $\errDesc{k}=e_{p,k}/(2\tilde\gamma)$ and $\rDesc = \Psi_p/(2\tilde\gamma)$ for any $\tilde\gamma>0$, provided $\Xi=0$, $\inf [F+G] > -\infty$, $\localset \supset \sublev_{\Psi_p/(2\tilde\gamma)+[F+G](x^0)}(F+G)$, and,
        \[
            0 \le \LL \defeq (1 + \trackingressum^2\inv{\tilde\gamma} + \tilde\gamma)\Lambda \le 2(1-\eta)M.
        \]
        \item\label{item:fb:growth:tracking:approx-cont}
        \Cref{ass:fb:approx-cont} holds if $\Lambda \le cM$ for a $c>0$.
    \end{enumerate}

    Suppose further that $G$ is $\Gamma_G$-strongly subdifferentiable in $X$, and $F'$ is $\Gamma_F$-monotone in $\localset$ for some $\Gamma_F, \Gamma_G \in \linear(X; X^*)$.
    Suppose also that $\openball_{\MF}(\basex, \delta) \subset \localset$ for a base point $\basex \in X$, $\delta>0$, and $\MF$ defined below in \cref{item:fb:growth:tracking:mono-local} or \cref{item:fb:growth:tracking:local}.
    Pick $\tilde\gamma>0$ and $p \in [1, \kappa)$.
    If
    \begin{equation}
        \label{eq:fb:growth:init}
        x^0 \in \openball_{\MF}\left(\basex, \sqrt{\delta^2-\Psi_p/\tilde\gamma}\right)
        \quad\text{with}\quad
        \Psi_p < \delta^2\tilde\gamma,
    \end{equation}
    then, taking $\errMono{k}(\basex)=e_{p,k}/(2\tilde\gamma)$ and $\basexset=\localset$, for any $\eta \ge 0$:
    \begin{enumerate}[resume*]
        \item\label{item:fb:growth:tracking:mono-local}
        \Cref{ass:fb:non-escape} option \cref{item:fb:non-escape:any} holds if $\basex \in \inv H(0)$, $F'$ is $\Lambda$-$*$-cocoercive\footnote{%
            Recall from \cref{lemma:opeartor-reg:coco-lipschitz} that this implies the earlier-assumed $\Lambda$-$*$-Lipschitz property.
        }
        in $\localset$, and, for some $\zeta \in (0, 1]$,%
        \begin{subequations}%
        \label{eq:fb:growth:tracking:mono-local:assumptions}
        \begin{align}
            0 & \le \MF \defeq M - 2(1-\zeta)\Gamma_F,
            \quad
            2\Gamma_G + 2p(1-\zeta)\Gamma_F \ge (p-1) M + \tilde\gamma \Lambda,
            \quad\text{and}
            \\
            0 & \le \LL \defeq (\inv{(2\zeta)}+\trackingressum^2\inv{\tilde\gamma})\Lambda
            \le (1-\eta)M.
        \end{align}
        \end{subequations}

        \item\label{item:fb:growth:tracking:local}
        \Cref{ass:fb:non-escape} option \cref{item:fb:non-escape:min} holds if $\localset$ is convex, $\inf_{x \in \localset} \gap(x; \basex) \ge 0$, and,%
        \begin{subequations}%
        \label{eq:fb:growth:tracking:local:assumptions}
        \begin{align}
            0 & \le \MF \defeq M - \Gamma_F,
            \quad
            \Gamma_G + p\Gamma_F \ge (p-1) M + \tilde\gamma \Lambda
            \quad\text{and}\\
            0 &\le \LL \defeq (1+\trackingressum^2\inv{\tilde\gamma})\Lambda
            \le (1-\eta)M.
        \end{align}
        \end{subequations}
    \end{enumerate}
\end{theorem}

\begin{proof}
    Throughout the proof, $k \in \N$.

    \Cref{item:fb:growth:tracking:weak}:
    By \cref{lemma:smoothness:descent} and the subdifferentiability of $G$, we have
    \[
        \dualprod{F'(\thisx)+ \subdiff G(\nextx)}{\nextx-\thisx}_{X^*,X}
        \ge
        [F+G](\nextx) - [F+G](\thisx)
        -\frac{1}{2}\norm{\nextx-\thisx}_\Lambda^2.
    \]
    Combining this with \cref{thm:weaker:smoothness} for $\basex=\thisx$  establishes
    \[
        \dualprod{\estdiff F(\thisx) + \subdiff G(\nextx)}{\nextx - \thisx}_{X^*,X}
        \ge
        F(\nextx)-F(\thisx)
        - \frac{1}{2} \norm{\nextx - \thisx}_{\LL}^2
        - \frac{1}{2\tilde\gamma}e_{p,k}
    \]
    with $\sup_{N \in \N} \sum_{k=0}^{N-1} p^k e_{p,k} < \Psi_p$ whenever $\{x^n\}_{n=0}^k \subset \localset$.
    This verifies \eqref{eq:fb:descent:xk} and $\sup_{N \in \N} \sum_{k=0}^{N-1} \errDesc{k} \le \rDesc$ with $\errDesc{k}=e_{p,k}/(2\tilde\gamma)$.
    Since we assume $\LL \le 2(1-\eta)M$, \cref{ass:fb:descent}\,\cref{item:fb:descent:xk,item:fb:descent:error} consequently hold.
    Because $\Xi=0$, \cref{item:fb:descent:coercitivity} requires $[F+G](x^N) \le \rDesc + [F+G](x^0)$ to imply $x^N \in \localset$. This holds whenever $\localset \supset \sublev_{\Psi_p/(2\tilde\gamma)+[F+G](x^0)}(F+G)$, as we have assumed.
    Likewise, we prove \cref{item:fb:descent:lb} with the lower bound $\inf [F+G] - [F+G](x^0) > -\infty$.

    \Cref{item:fb:growth:tracking:approx-cont}:
    We have
    $
        \inf_{x_{k+1}^* \in H(\nextx)} \norm{x_{k+1}^*-\totalest{k+1}}_{X^*}
        \le
        \norm{F'(\nextx)-\estdiff F(\thisx)}_{X^*}
    $
    through the choice
    \[
        x_{k+1}^* =  F'(\nextx) - \estdiff F(\thisx) + \totalest{k+1} \in F'(\nextx) + \subdiff G(\nextx) + \Xi\nextx = H(\nextx).
    \]
    Hence, \cref{lemma:semicon:props}\,\cref{item:semicon:props:bound,item:semicon:props:triangle}, followed by the $\Lambda$-$*$-Lipschitz assumption on $F'$ and \cref{thm:weaker:erroneous-lipschitz} establish
    \begin{gather}
        \label{eq:fb:tracking:lipschitz-like}
        \begin{split}
        \inf_{x_{k+1}^* \in H(\nextx)} \norm{x_{k+1}^*-\totalest{k+1}}_{X^*}
        &
        \le
        \norm{\Lambda}_{\linear(X; X^*)}^{1/2} \inf_{x_{k+1}^* \in H(\nextx)} \norm{F'(\nextx)-\estdiff F(\thisx)}_{\Lambda,*}
        \\
        &
        \le
        \norm{\Lambda}_{\linear(X; X^*)}^{1/2}\left(
            \norm{F'(\thisx) - F'(\nextx)}_{\Lambda,*}
            +
            \norm{\estdiff F(\thisx) - F'(\thisx)}_{\Lambda,*}
        \right)
        \\
        &
        \le
        \norm{\Lambda}_{\linear(X; X^*)}^{1/2}\left(
            \norm{\nextx-\thisx}_\Lambda
            +
            \errLip{k}^{1/2}
        \right),
        \end{split}
    \shortintertext{where the $\errLip{k} \ge 0$ satisfy \eqref{eq:weaker:erroneous-lipschitz:error-bound}, hence}
        \label{eq:fb:tracking:m-sum-bound}
        \sum_{n=0}^{k-1} \errLip{n}
        \le \Psi_1 + \trackingressum[1]\sum_{n=0}^{k-1}\norm{x^{n+1}-x^n}_\Lambda
        \le \Psi_1 + c\trackingressum[1]\sum_{n=0}^{k-1}\norm{x^{n+1}-x^n}_M.
    \end{gather}
    Now, $\sup_{N \in \N} \sum_{k=0}^{N-1}\norm{\nextx - \thisx}_M^2 < \infty$
    implies $\lim_{k \to \infty} \inf_{x_{k+1}^* \in H(\nextx)} \norm{x_{k+1}^*-\totalest{k+1}}_{X^*} = 0$ through \cref{eq:fb:tracking:lipschitz-like,eq:fb:tracking:m-sum-bound}.
    This establishes \cref{ass:fb:approx-cont}.

    For the verification of both \cref{item:fb:growth:tracking:mono-local,item:fb:growth:tracking:local}, we observe that by our choice of $\errMono{k}(\basex)=e_{p,k}/(2\tilde\gamma)$, the definition of $r_p$ in \cref{ass:fb:non-escape}, and \cref{thm:weaker:smoothness}, we have
    \begin{equation}
        \label{eq:growth:tracking:rp}
        r_p
        \defeq
        \sup_{N \in \N} p^{-N} \sum_{k=0}^{N-1} \frac{p^k e_{p,k}}{2\tilde\gamma}
        \le
        \sup_{N \in \N} \sum_{k=0}^{N-1} \frac{p^k e_{p,k}}{2\tilde\gamma} < \frac{\Psi_p}{2\tilde\gamma}.
    \end{equation}
    Hence, \eqref{eq:fb:growth:init} verifies \eqref{eq:fb:local:error-assumption} and  $x^0 \in \openball_{\MF}(\basex, \sqrt{\delta^2 - 2r_p})$.
    We have also explicitly assumed the remaining neighbourhood conditions of \cref{ass:fb:non-escape}, as well as $0 \le \LL \le (1-\eta)M$, so only need to verify the respective \eqref{eq:fb:three-point-monotonicity:approx} or \eqref{eq:fb:three-point-smoothness:approx}.

    \Cref{item:fb:growth:tracking:mono-local}:
    Suppose $\{x^n\}_{n=0}^k \subset \localset$.
    By \cref{lemma:smoothness:monotonicity} and \eqref{eq:fb:growth:tracking:mono-local:assumptions}, we have
    \[
        \dualprod{F'(\thisx) + \subdiff G(\nextx) + \Xi\nextx}{\nextx-\basex}_{X^*,X}
        - (1-\zeta)\fakenormsq{\thisx-\basex}{\Gamma_F}
        \ge
         \fakenormsq{\nextx-\basex}{\Gamma_G}
        - \frac{1}{4\zeta}\norm{\nextx - \this x}_{\Lambda}^2.
    \]
    Due to \eqref{eq:fb:implicit:thm}, we have $-M(\nextx-\thisx) = \totalest{k+1} \in \estdiff F(\thisx) + \subdiff G(\nextx) + \Xi\nextx$.
    Therefore, combining the previous inequality with \cref{thm:weaker:smoothness} gives
    \[
        \dualprod{\totalest{k+1}}{\nextx - \basex}_{X^*,X}
        - (1-\zeta)\fakenormsq{\thisx-\basex}{\Gamma_F}
        \ge
        \fakenormsq{\nextx-\basex}{\Gamma_G - (\tilde\gamma/2) \Lambda}
        - \frac{1}{2}\norm{\nextx - \this x}_{\LL}^2
        - \errMono{k}(\basex).
    \]
    This verifies \eqref{eq:fb:three-point-monotonicity:approx}
    with $\GF = (1-\zeta)\Gamma_F$ and $\GG = \Gamma_G - (\tilde\gamma/2) \Lambda$.
    \Cref{ass:fb:non-escape} option \cref{item:fb:non-escape:any} requires
    $M+2\GG \ge p\MF$ for $\MF \defeq M-2\GF \ge 0$.
    That is, $M + 2(\Gamma_G - (\tilde\gamma/2) \Lambda) \ge p(M-2(1-\zeta)\Gamma_F)$
    and $M \ge 2(1-\zeta)\Gamma_F$.
    The former reorganises as $2\Gamma_G + 2p(1-\zeta)\Gamma_F \ge (p-1) M + \tilde\gamma \Lambda$. We have assumed both conditions.

    \Cref{item:fb:growth:tracking:local}:
    Suppose  $\{x^n\}_{n=0}^k \subset \localset$.
    Since $\localset$ is convex, \cref{lemma:smoothness:nonconvex,thm:weaker:smoothness}, and the definition of $\LL$ in \eqref{eq:fb:growth:tracking:local:assumptions} give
    \[
        \begin{split}
        \dualprod{F'(\thisx) + \subdiff G(\nextx) + \Xi\nextx}{\nextx-\basex}_{X^*,X}
        - \frac{1}{2}\fakenormsq{\thisx-\basex}{\Gamma_F}
        &
        \ge
        \gap(\nextx; \basex)
        \\
        \MoveEqLeft[-1]
        + \frac{1}{2}\fakenormsq{\nextx-\basex}{\Gamma_G-\tilde\gamma\Lambda}
        - \frac{1}{2}\norm{\nextx - \this x}_{\LL}^2.
        \end{split}
    \]
    Similarly to the claim \cref{item:fb:growth:tracking:mono-local}, we now verify \eqref{eq:fb:three-point-smoothness:approx} with $\GF = \Gamma_F$ and $\GG = \Gamma_G - \tilde\gamma \Lambda$ by combining this inequality with  \cref{thm:weaker:smoothness}.
    with $\GF = (1-\zeta)\Gamma_F$ and $\GG = \Gamma_G - \tilde\gamma \Lambda$.
    \Cref{ass:fb:non-escape} option \cref{item:fb:non-escape:min} requires
    $M+\GG \ge p\MF$ for $\MF \defeq M-\GF \ge 0$.
    That is, $\GG + p\GF \ge (p-1) M + \tilde\gamma \Lambda$ and $M \ge \Gamma_F$, which we have assumed.
\end{proof}

\begin{remark}[error term]
    Recalling \cref{rem:tracking:error-sum}, we can make $\rDesc \ge 0 $ and $r_p \ge 0$ arbitrarily small by taking high-quality first inner and adjoint steps.
\end{remark}

\begin{remark}[linear convergence]
    From \eqref{eq:growth:tracking:rp} and \cref{thm:weaker:smoothness}, we see that through good-quality first inner and adjoint steps, \eqref{eq:fb:local:error-assumption:strong} can be made to hold.
    Therefore, when \cref{thm:fb:growth:tracking} verifies \cref{ass:fb:non-escape} for $p>1$, the linear convergence \cref{cor:fb:local:linear} is applicable.
\end{remark}

\subsection{Forward-backward splitting}
\label{sec:outer-examples:fb}

We now interpret \cref{thm:fb:growth:tracking} for both standard exact forward-backward splitting in a Hilbert space, as well as outer forward-backward splitting when we construct $\estgrad F$ with inner and adjoint methods that satisfy the tracking theory of \cref{sec:tracking}; in particular, the methods of \cref{sec:inner-adjoint}.
We write $\Inj: X \hookrightarrow X^*$, $x \mapsto \iprod{x}{\freevar}_X$ for the standard injection from the Hilbert space $X$ to its dual. Then $\norm{\freevar}_\Inj = \norm{\freevar}_X$.

For clarity of the statement of the next corollary, we omit any mention of parameters that are not important for the algorithm itself, and that can be deduced from the other choices.

\begin{corollary}[Inexact outer forward-backward splitting on a Hilbert space]
    \label{cor:fb:tracking}
    On a Hilbert space $X$, suppose $F: X \to \R$ has an $L$-Lipschitz Fréchet derivative in $\localset \subset X$, and $G: X \to \extR$ is convex, proper, and lower semicontinuous.
    Pick a step length parameter $\tau>0$, and for all $k \in \N$, construct $\nextestgrad$ obeying \cref{ass:tracking:main} in $\localset$ for the distances
    \begin{equation*}
        \dX=L\norm{\freevar}_{X}
        \quad\text{and}\quad
        \dXstar=\inv L\norm{\freevar}_{X^*}.
    \end{equation*}
    Update
    \begin{equation}
        \label{eq:fb:corollary-inexact-fb}
        \nextx \defeq \prox_{\tau G}(\thisx - \tau\nextestgrad).
    \end{equation}
    Let $\trackingressum$, $\kappa$, $e_{p,k}$, and $\Psi_p$ be as in \cref{thm:weaker:smoothness}.
    Then:
    \begin{enumerate}[label=(\roman*)]
        \item\label{item:fb:tracking:weak}
        \Cref{ass:fb:descent} holds with $\rDesc = \Psi_p/(2\tilde\gamma)$ provided $\inf [F+G] > -\infty$, $\tau(1 + \trackingressum^2\inv{\tilde\gamma} + \tilde\gamma)L < 2$ and
        $
            \localset \supset \sublev_{\Psi_p/(2\tilde\gamma)+[F+G](x^0)}(F+G)
        $
        for some $\tilde\gamma>0$.
        \item\label{item:fb:tracking:approx-cont}
        \Cref{ass:fb:approx-cont} holds.
    \end{enumerate}

    Suppose further that $G$ is $\gamma_G$-strongly subdifferentiable in $X$, and $F'$ is $\gamma_F$-monotone in $\localset$ for some $\gamma_F, \gamma_G \in \R$,
    and that $\openball(\basex, \delta/\bar m) \subset \localset $ for a base point $\basex \in X$, $\delta>0$, and $\bar m$ defined below in \cref{item:fb:tracking:mono-local} or \cref{item:fb:tracking:local}.
    Pick $\tilde\gamma>0$ and $p \in [1, \kappa)$.
    If
    \[
        x^0 \in \openball\left(\basex, \inv{\bar m}\sqrt{\delta^2-\Psi_p/\tilde\gamma}\right)
        \quad\text{with}\quad
        \Psi_p < \delta^2\tilde\gamma,
    \]
    then:
    \begin{enumerate}[resume*]
        \item\label{item:fb:tracking:mono-local}
        \Cref{ass:fb:non-escape} option \cref{item:fb:non-escape:any} holds if $0 \in F'(\basex) + \subdiff G(\basex)$, and, for some $\zeta \in (0, 1]$,
        \[
            2\tau(1-\zeta)\gamma_f < 1,
            \quad
            2\gamma_g + 2p(1-\zeta)\gamma_f \ge (p-1)\inv\tau + \tilde\gamma L,
            \quad\text{and}\quad
            0 \le \tau(\inv{(2\zeta)}+\trackingressum^2\inv{\tilde\gamma})L < 1.
        \]
        In this case $\MF = \bar m \Inj$ for $\bar m = \inv\tau - 2(1-\zeta)\gamma_f > 0$.
        \item\label{item:fb:tracking:local}
        \Cref{ass:fb:non-escape} option \cref{item:fb:non-escape:min} holds if $\localset$ is convex,
        $\inf_{x \in \localset} [F+G](x) \ge [F+G](\basex)$ and,
        \[
            \tau \gamma_f < 1,
            \quad
            \gamma_g + p\gamma_f \ge (p-1)\inv\tau + \tilde\gamma L,
            \quad\text{and}\quad
            0  \le \tau(1+\trackingressum^2\inv{\tilde\gamma})L < 1.
        \]
        In this case $\MF = \bar m \Inj$ for $\bar m = \inv\tau - \gamma_f > 0$.
    \end{enumerate}
\end{corollary}

\begin{proof}
    We apply \cref{thm:fb:growth:tracking}, whose conditions we need to verify.
    In its operator-relative framework, we take $\Xi=0$, $M=\inv\tau \Inj$, $\Lambda=L\Inj$, $\Gamma_F=\gamma_f\Inj$, $\Gamma_G=\gamma_G\Inj$.
    Then the implicit step \eqref{eq:fb:implicit:thm} holds by \eqref{eq:fb:corollary-inexact-fb}, and the condition $\Lambda \le cM$ for a $c>0$ of \cref{thm:fb:growth:tracking}\,\cref{item:fb:growth:tracking:approx-cont} reduces to $\tau L  \le c$, which automatically holds.
    We recall from \cref{ex:operator-reg:hilbert} that in Hilbert spaces with $\Lambda=L\Inj$, both the $\Lambda$-$*$-cocoercivity and $\Lambda$-$*$-Lipschitz properties are equivalent to the basic $L$-Lipschitz property of $f'$.
    Further, when
    \begin{enumerate}[nosep]
        \item
        in \cref{thm:fb:growth:tracking}\,\cref{item:fb:growth:tracking:weak}, we take
        $0 \le \bar\lambda \defeq (1 + \trackingressum^2\inv{\tilde\gamma}+\tilde\gamma)L < 2(1-\eta)\inv\tau$. (This holds for some $\eta>0$ by our step length assumption
        $\tau(1 + \trackingressum^2\inv{\tilde\gamma})L + \tilde\gamma < 2$.)
        \item in \cref{thm:fb:growth:tracking}\,\cref{item:fb:growth:tracking:mono-local} we take $\eta \defeq 1-\tau\bar\lambda>0$ for $0 \le \bar\lambda \defeq (1+\trackingressum^2\inv{\tilde\gamma})L < \inv\tau$;
        \item in \cref{thm:fb:growth:tracking}\,\cref{item:fb:growth:tracking:local} we take $\eta \defeq 1-\tau\bar\lambda>0$ for $0 \le \bar\lambda \defeq (\inv{(2\zeta)}+\trackingressum^2\inv{\tilde\gamma})L < \inv\tau$;
    \end{enumerate}
    and in each case we take $\LL = \bar\lambda\Inj$ and $\tilde\eta=0$, then the conditions of \cref{thm:fb:growth:tracking}\,\cref{item:fb:growth:tracking:weak,item:fb:growth:tracking:local,item:fb:growth:tracking:mono-local} readily translate to the present respective conditions.
\end{proof}

\begin{remark}
    If $p=1$, all the step length conditions in the corollary will hold by taking first $\tilde\gamma>0$ small enough, and then $\tau>0$ small enough.
    If $p>1$ is desired (to use the linear convergence \cref{cor:fb:local:linear}), we need $\gamma_g+\gamma_f>0$, and take also $p>1$ sufficiently small.
\end{remark}

For exact forward-backward splitting with $\nextestdiff = F'(\thisx)$, by taking $p=1$, $\trackingressum=0$, and $e_{p,k}=0$, and then letting $\tilde\gamma\downto 0$ in the previous result, we immediately obtain the following corollary.
In \cref{item:fb:tracking:weak:exact}, we even take $\localset=X$, since the tracking inequalities now trivially work with that choice, and we do not assume any local properties form $F$ and $G$; in \cref{item:fb:tracking:mono-local:exact,item:fb:tracking:local:exact} we do.

\begin{corollary}[Exact outer forward-backward splitting on a Hilbert space]
    \label{cor:fb:exact}
    On a Hilbert space $X$, suppose $F: X \to \R$ has an $L$-Lipschitz Fréchet derivative in $\localset \subset X$, and $G: X \to \extR$ is convex, proper, and lower semicontinuous.
    Pick a step length parameter $\tau>0$.
    Update
    \[
        \nextx \defeq \prox_{\tau G}(\thisx - \tau\grad F(\thisx)).
    \]
    Then:
    \begin{enumerate}[label=(\roman*)]
        \item\label{item:fb:tracking:weak:exact}
        \Cref{ass:fb:descent} holds provided $\inf [F+G] > -\infty$, and
        $
            0 \le \tau L < 2.
        $
        \item\label{item:fb:tracking:approx-cont:exact}
        \Cref{ass:fb:approx-cont} holds.
    \end{enumerate}

    Suppose further that $G$ is $\gamma_G$-strongly subdifferentiable in $X$, and $F'$ is $\gamma_F$-monotone in $\localset$ for some $\gamma_F, \gamma_G \in \R$,
    and that $\openball(\basex, \delta/\bar m) \subset \localset $ for a base point $\basex \in X$, $\delta>0$, and $\bar m$ defined below in \cref{item:fb:tracking:mono-local:exact} or \cref{item:fb:tracking:local:exact}.
    Pick $p \in [1, \kappa)$.
    If $x^0 \in \openball(\basex, \delta/\bar m)$, then:
    \begin{enumerate}[resume*]
        \item\label{item:fb:tracking:mono-local:exact}
        \Cref{ass:fb:non-escape} option \cref{item:fb:non-escape:any} holds if $0 \in F'(\basex) + \subdiff G(\basex)$, and for some $\zeta \in (0, 1]$, we have
        \[
            2\tau(1-\zeta)\gamma_f > 1,
            \quad
            2\gamma_g + 2p(1-\zeta)\gamma_f \ge (p-1)\inv\tau,
            \quad\text{and}\quad
            \tau L < 2\zeta.
        \]
        In this case $\MF = \bar m \Inj$ for $\bar m = \inv\tau - 2(1-\zeta)\gamma_f > 0$.

        \item\label{item:fb:tracking:local:exact}
        \Cref{ass:fb:non-escape} option \cref{item:fb:non-escape:min} holds if $\localset$ is convex, $\inf_{x \in \localset} [F+G](x) \ge [F+G](\basex)$, and,
        \[
            \tau \gamma_f > 1,
            \quad
            \gamma_g + p\gamma_f \ge (p-1)\inv\tau,
            \quad\text{and}\quad
            \tau L < 1.
        \]
        In this case $\MF = \bar m \Inj$ for $\bar m = \inv\tau - \gamma_f > 0$.
    \end{enumerate}
\end{corollary}

Now that we have provided step length and growth conditions that prove \cref{,ass:fb:descent,ass:fb:non-escape,ass:fb:approx-cont} for both exact and single-loop forward-backward splitting for bilevel problems, we can use \cref{thm:fb:subdiff,thm:fb:local:values,thm:fb:local:weak} to prove convergence.

\subsection{Primal-dual proximal splitting}
\label{sec:outer-examples:pdps}

We consider now the problem \eqref{eq:fb:pdps:problem}, i.e.,
\begin{equation}
    \label{eq:pdps:problem}
    \min_{z \in Z}~ f(z) + g(z) + h(Kz)
    =
    \min_{z \in Z} \max_{y \in Y}~ f(z) + g(z) + \dualprod{y}{Kz}_{Y,Y^*} - h_*(y),
\end{equation}
where $Z$ and $Y$ are normed spaces equipped with self-adjoint and positive semi-definite $M_z \in \linear(Z; Z^*)$ and $M_y \in \linear(Z; Z^*)$.
The functions $g: Z \to \extR$, $h_*: Y \to \extR$, and $h=(h_*)^*$ are convex, proper, and lower semicontinuous, and $f: Z \to \R$ possibly non-convex but Fréchet differentiable with $LM_z$-$*$-Lipschitz Fréchet derivative in $\zlocalset \subset Z$ for an $L \ge 0$.
Moreover, $K \in \linear(Z; Y^*)$.

We represent the problem and method in the implicit forms \eqref{eq:fb:pdps:problem}  and \eqref{eq:fb:pdps:implicit} with
\begin{equation}
    \label{eq:pdps:problem-functions}
    F(z, y) \defeq f(z),
    \quad
    G(z,y) \defeq g(z) + h_*(y),
    \quad\text{and}\quad
    \Xi \defeq \begin{pmatrix} 0 & K^* \\ -K & 0 \end{pmatrix}
\end{equation}
while the inexact PDPS becomes an instance of \eqref{eq:fb:implicit:thm} with
\begin{equation}
    \label{eq:pdps:algorithm-functions}
    \estdiff F(\this z, \thisy) \defeq \begin{pmatrix}
        \estdiff f(\this z) \\
        0
    \end{pmatrix}
    \quad\text{and}\quad
    M \defeq
    \begin{pmatrix}
        \inv \tau M_z, & -K^* \\
        -K & \inv\sigma M_y
    \end{pmatrix}
\end{equation}

\begin{remark}
    $M_z$ and $M_y$ generate (semi-)norms in $Z$ and $Y$, respecting the Pythagoras' identity \eqref{eq:norms:pythagoras}.
    In Hilbert spaces, we can simply take $M_z: X \hookrightarrow X^*$ and $M_y: Y \hookrightarrow Y^*$ as the standard injections, to obtain a standard Hilbert space algorithm
    \[
        \left\{
            \begin{array}{l}
                \nexxt z \defeq \prox_{\tau g}(\this z - \tau \estgrad f(\this z) - \tau K^*\thisy), \\
                \nexty \defeq \prox_{\sigma h_*}(\thisy + \sigma K(2\nexxt z - \this z)).
            \end{array}
        \right.
    \]
\end{remark}

We start by extending the standard step length assumption $\tau L + \tau\sigma\norm{K}^2 \le 1$ of the PDPS into normed spaces.
In the next assumption, in the Hilbert space setting with $M_y$ and $M_z$ the standard injections, we can take $K_z=K$ and $K_y=\Id$.

\begin{assumption}[PDPS step length condition]
    \label{ass:pdps:step-length-cond}
    Suppose $K=K_y^* K_z$ for some $K_z \in \linear(Z; V^*)$, $K_y \in \linear(Y; V)$, and a normed space $V$.
    Given $\lambda \ge 0$, the step length parameters $\tau,\sigma>0$ satisfy
    \[
        \norm{K_y \freevar}_{V} \le \norm{\freevar}_{M_y}
        \quad\text{and}\quad
        (\tau\lambda-1)\norm{\freevar}_{M_z}^2 + \tau\sigma\norm{K_z\freevar}_{V^*}^2 \le 0.
    \]
\end{assumption}

\begin{lemma}[PDPS preconditioning operator]
    \label{lemma:pdps:m-estim}
    If \cref{ass:pdps:step-length-cond} holds, then $M$ is positive semi-definite and for any $\gamma_z, \gamma_y \ge 0$ and $\gamma \defeq \min\{\gamma_z\tau,\gamma_y\sigma\}/2$, we have
    \[
        \lambda\diag(M_z,\; 0)
        \le M
        \quad\text{and}\quad
        \gamma M \le
        \diag(\gamma_z M_z,\; \gamma_y M_y).
    \]
\end{lemma}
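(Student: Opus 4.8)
The plan is to verify both operator inequalities by expanding, for $x=(z,y)\in X=Z\times Y$, the quadratic form
\[
    \dualprod{Mx}{x}_{X,X^*}
    = \inv\tau\norm{z}_{M_z}^2 + 2\dualprod{Kz}{y}_{Y^*,Y} + \inv\sigma\norm{y}_{M_y}^2,
\]
where the two off-diagonal blocks $K$ and $K^*$ are mutually adjoint, so $M$ is self-adjoint (as are $\diag(M_z,0)$ and $\diag(\gamma_z M_z,\gamma_y M_y)$, since $M_z,M_y$ are). Everything then comes down to controlling the cross term $2\dualprod{Kz}{y}_{Y^*,Y}=2\dualprod{K_z z}{K_y^* y}_{V,V^*}$ by the vectorial Young's inequality, and the point is that the \emph{same} weight, $\sigma$, does the job in both claims.

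For the first two assertions I would prove the stronger $M-\lambda\diag(M_z,0)\ge 0$, which also yields $M\ge 0$ because $\lambda\diag(M_z,0)\ge 0$. Young's inequality with weight $\sigma$ gives $2\dualprod{Kz}{y}_{Y^*,Y}\ge -\sigma\norm{z}_{K_z^*K_z}^2-\inv\sigma\norm{y}_{K_yK_y^*}^2$, and $K_yK_y^*\le M_y$ lets the last term be absorbed into $\inv\sigma\norm{y}_{M_y}^2$, leaving
\[
    \dualprod{(M-\lambda\diag(M_z,0))x}{x}_{X,X^*}
    \ge (\inv\tau-\lambda)\norm{z}_{M_z}^2 - \sigma\norm{z}_{K_z^*K_z}^2.
\]
Multiplying by $\tau>0$ turns the right-hand side into $\norm{z}^2_{M_z-\tau\lambda M_z-\tau\sigma K_z^*K_z}$, which is nonnegative exactly by the step length condition $\tau\lambda M_z+\tau\sigma K_z^*K_z\le M_z$ of \cref{ass:pdps:step-length-cond}.

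For the last assertion I would treat $\diag(\gamma_z M_z,\gamma_y M_y)-\gamma M$ analogously: the cross term contributes $-2\gamma\dualprod{Kz}{y}_{Y^*,Y}\ge -\gamma\sigma\norm{z}_{K_z^*K_z}^2-\gamma\inv\sigma\norm{y}_{K_yK_y^*}^2$, and now I would invoke the two consequences $K_z^*K_z\le\inv{(\tau\sigma)}M_z$ (from $\tau\sigma K_z^*K_z\le M_z-\tau\lambda M_z\le M_z$) and $K_yK_y^*\le M_y$ of the same assumption. This gives
\[
    \dualprod{(\diag(\gamma_z M_z,\gamma_y M_y)-\gamma M)x}{x}_{X,X^*}
    \ge (\gamma_z-2\gamma\inv\tau)\norm{z}_{M_z}^2 + (\gamma_y-2\gamma\inv\sigma)\norm{y}_{M_y}^2,
\]
and both coefficients are nonnegative by the definition $\gamma=\min\{\gamma_z\tau,\gamma_y\sigma\}/2$ (the case $\gamma=0$ being trivial).

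The routine part is the bilinearity bookkeeping and the identifications under which $K_z^*K_z\in\linear(Z;Z^*)$ and $K_yK_y^*\in\linear(Y;Y^*)$ are well-defined self-adjoint positive semi-definite operators. The only genuinely load-bearing step — and the one I would be most careful about — is the choice of the Young weight: it must be $\sigma$, not a free parameter, so that $\inv\sigma\norm{y}_{K_yK_y^*}^2$ is precisely what $\inv\sigma\norm{y}_{M_y}^2$ absorbs and, simultaneously, the leftover $z$-term matches the left-hand side $\tau\sigma K_z^*K_z$ of the step length condition after scaling by $\tau$. The second inequality is then a mirror image of the first, using the weaker bound $K_z^*K_z\le\inv{(\tau\sigma)}M_z$ in place of the full step length condition.
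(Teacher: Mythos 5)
Your proof is correct and follows essentially the same argument as the paper's: expand the $M$-quadratic form, apply Young's inequality with weight $\sigma$ to the cross term $2\dualprod{K_z z}{K_y^* y}$, absorb the $y$-contribution into $\inv\sigma M_y$ via $K_yK_y^*\le M_y$, and invoke the step-length condition (in full for the first inequality, in the weakened form $\sigma K_z^*K_z\le\inv\tau M_z$ for the second). The only cosmetic difference is that you phrase both claims as nonnegativity of operator differences, whereas the paper writes the equivalent chains of norm inequalities.
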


\begin{proof}
    By a simple application of Young's inequality and \cref{ass:pdps:step-length-cond}, we have
    \[
        \begin{split}
        \norm{(z, y)}_{M}^2
        &
        = \inv\tau \norm{z}_{M_z}^2 + \inv\sigma \norm{y}_{M_y}^2 - 2\dualprod{K_z z}{K_y y}_{V^*,V}
        \\
        &
        \ge
        \left(\inv\tau \norm{z}^2_{M_z} - \sigma\norm{K_z z}_{V^*}^2\right)
        +\inv\sigma\left(\norm{y}^2_{M_y} - \norm{K_y y}_{V}^2\right)
        \ge
        \lambda \norm{z}^2_{M_z}
        \end{split}
    \]
    for any $x=(z,y) \in Z \times Y$.
    This establishes the first claimed inequality.
    The second follows by using Young's inequality and \cref{ass:pdps:step-length-cond} to establish
    \[
        \gamma \norm{(z, y)}_{M}^2
        \le
        \gamma\left(\inv\tau \norm{z}^2_{M_z} + \sigma\norm{K_z z}_{V^*}^2\right)
        +\gamma\inv\sigma\left(\norm{y}^2_{M_y} + \norm{K_y y}_{V}^2\right)
        \le
        \frac{2\gamma}{\tau}\norm{z}^2_{M_z}
        + \frac{2\gamma}{\sigma}\norm{y}^2_{M_y}.
        \qedhere
    \]
\end{proof}

We can now translate \cref{thm:fb:growth:tracking} to the outer PDPS of \cref{ex:fb:pdps}.
It is missing the verification of \cref{ass:fb:descent}\,\cref{item:fb:descent:coercitivity,item:fb:descent:lb} for subdifferential convergence.
Because $\Xi$ is not cyclically monotone (see \cite[Chapter 24]{rockafellar-convex-analysis}), we see no way in general for the PDPS to satisfy that property.\footnote{%
    However, we could try to enforce the conditions, monitoring for convergence failure by setting expected bounds on
    \[
            \sum_{k=0}^{N-1} \gap(\nextx; \thisx)
            = [F+G](x^N) - [F+G](x^0)
            - \sum_{k=0}^{N-1} \dualprod{\Xi\nextx}{\thisx}.
    \]
    In fact, if $\inf F+G > -\infty$, we only need to ensure that the latter sum term sum stays within chosen bounds, without having to calculate potentially costly function values.
}

\begin{theorem}[PDPS with inexact $\estdiff f$; everything else exact]
    \label{thm:pdps:inexact-expansion}
    Assume the setup of \cref{eq:pdps:problem,ass:pdps:step-length-cond} for some $\tau,\sigma,\lambda>0$.
    Suppose that \cref{ass:tracking:main} holds for $f$ in $\zlocalset$ with the distances
    \[
        \dX=\norm{\freevar}_{L M_z}
        \quad\text{and}\quad
        \dXstar=\norm{\freevar}_{L M_z, *}.
    \]
    Then
    \begin{enumerate}[label=(\roman*)]
        \item\label{item:pdps:inexact-expansion:weak}
        \Cref{ass:fb:approx-cont} holds.
    \end{enumerate}

    Suppose further that $g$ and $h_*$ are, respectively, $\gamma_g M_z$-subdifferentiable in $X$ and $\gamma_{h_*} M_y$-subdifferentiable in $Y$, and that $f'$ is $\gamma_f M_z$-subdifferentiable in $\zlocalset$ for some $\gamma_g,\gamma_{h_*},\gamma_f \ge 0$.
    Let $\MF$ and $\bar M_z$ be as defined below in \cref{item:pdps:inexact-expansion:mono} or \cref{item:pdps:inexact-expansion:smoothness}.
    Suppose $\openball_{\bar M_z}(\basez, \delta_z) \subset \zlocalset$ for some primal base point $\basez$ and $\delta_z>0$.
    Let $\basex \in \{\base{z}\} \times \Dom h_*$ and $\localset \defeq \zlocalset \times \Dom h_*$.
    Pick $\tilde\gamma>0$ and  $p \in [1, \kappa)$.
    If
    \begin{equation}
        \label{eq:pdps:init-cond}
        x^0 = (z^0, y^0) \in \openball_{\MF}\left(\basex, \sqrt{\smash[b]{\lambda^2\delta_z^2 - \Psi_p/\tilde\gamma}}\right)
        \quad\text{with}\quad
        \Psi_p < \lambda^2\delta_z^2 \tilde\gamma,
    \end{equation}
    then:
    \begin{enumerate}[resume*]
        \item\label{item:pdps:inexact-expansion:mono}
        \Cref{ass:fb:non-escape} option \cref{item:fb:non-escape:any} holds if $\basex \in \inv H(0)$, $f'$ is $LM_z$-$*$-cocoercive\footnote{%
            Recall from \cref{lemma:opeartor-reg:coco-lipschitz} that this implies the earlier-assumed $\Lambda$-$*$-Lipschitz property.
        }
        in $\zlocalset$, and, for some $\zeta \in (0, 1]$, and $\eta\ge0$,
        \begin{subequations}%
        \label{eq:pdps:inexact-expansion:mono}
        \begin{align}
            \label{eq:pdps:inexact-expansion:mono:gamma-cond}
            \lambda & \le 2(1-\zeta)\gamma_f,
            \quad
            \min\{(2\gamma_g + 2p(1-\zeta)\gamma_f - \tilde\gamma L)\tau,\gamma_{h^*}\sigma\}/2
            \ge p-1,
            \quad\text{and}
            \\
            \label{eq:pdps:inexact-expansion:mono:lambda-cond}
            0 & \le \bar\lambda \defeq L(\inv{(2\zeta)} + \trackingressum^2\inv{\tilde\gamma}))
            \le (1-\eta)\lambda .
        \end{align}
        \end{subequations}
        In this case $\MF = M - 2(1-\zeta)\gamma_f \diag(M_z, 0) \ge 0$ and
        $\bar M_z = (\lambda-2(1-\zeta)\gamma_f)M_z \ge 0$.

        \item\label{item:pdps:inexact-expansion:smoothness}
        \Cref{ass:fb:non-escape} option \cref{item:fb:non-escape:min} holds if $\zlocalset$ is convex, $\inf_{x \in \localset} \gap(x; \basex) \ge 0$, and, for some $\eta\ge0$,
        \begin{subequations}%
        \label{eq:pdps:inexact-expansion:smoothness}
        \begin{align}%
            \label{eq:pdps:inexact-expansion:smoothness:gamma-cond}
            \lambda & \le \gamma_f,
            \quad
            \min\{(\gamma_g + p\gamma_f - \tilde\gamma L)\tau,\gamma_{h^*}\sigma\}/2
            \ge p-1,
            \quad\text{and}
            \\
            \label{eq:pdps:inexact-expansion:smoothness:lambda-cond}
            0 \le \bar\lambda & \defeq L( 1 + \trackingressum^2\inv{\tilde\gamma})
            \le (1-\eta)\lambda.
        \end{align}
        \end{subequations}
        In this case $\MF = M - \gamma_f \diag(M_z, 0) \ge 0$
        and $\bar M_z = (\lambda-\gamma_f)M_z \ge 0$.
    \end{enumerate}
\end{theorem}

\begin{proof}
    Recall the definitions \cref{eq:pdps:problem-functions,eq:pdps:algorithm-functions}.
    Observe that $F'$ is $\Lambda$-$*$-Lipschitz ($\Lambda$-$*$-cocoercive in \cref{item:pdps:inexact-expansion:mono}) and $\Gamma_F$-monotone, and $G$ is $\Gamma_G$-strongly convex for
    \[
        \Lambda \defeq
        \diag(LM_z,\; 0),
        \quad
        \Gamma_F \defeq
        \diag(\gamma_f M_z,\; 0),
        \quad\text{and}\quad
        \Gamma_G \defeq
        \diag(\gamma_g M_z,\; \gamma_{h_*} M_y).
    \]

    To use \cref{thm:fb:growth:tracking}, we directly verify \cref{thm:weaker:smoothness,thm:weaker:erroneous-lipschitz} for $F'$ and $\estdiff F$, instead of proving \cref{ass:tracking:main} for the extended functions.
    By \cref{thm:weaker:smoothness} applied to $f$ and $\estdiff f$, if $\{x^n = (z^n, y^n)\}_{n=0}^k \subset \localset$, we have for $\nexxt x = (\nexxt z, \nexxt y)$ and any $\bar x = (\bar z, \bar y)$ that
    \[
        \begin{split}
        \dualprod{\estdiff F(\this x)-F'(\thisx)}{\nextx - \bar x}_{X^*,X}
        &
        =
        \dualprod{\estdiff f(\this z)-f'(\this z)}{\nexxt z - \bar z}_{Z^*,Z}
        \\
        &
        \ge
        -\frac{\tilde\gamma}{2} \norm{\nexxt z - \bar z}_{L M_z}^2
        - \frac{\trackingressum^2}{2\tilde\gamma}\norm{\nexxt z - \this z}_{LM_z}
        - \frac{1}{2\tilde\gamma}e_{p,k}
        \\
        &
        =
        -\frac{\tilde\gamma}{2} \norm{\nextx-\bar x}_\Lambda^2
        - \frac{\trackingressum^2}{2\tilde\gamma}\norm{\nextx-\thisx}_\Lambda^2
        - \frac{1}{2\tilde\gamma}e_{p,k}.
        \end{split}
    \]
    Recalling the choice of distances \eqref{eq:fb:distances}, this verifies \cref{thm:weaker:smoothness} for $F$ and $\estdiff F$ in $\localset$.

    If $\{x^n = (z^n, y^n)\}_{n=0}^k \subset \localset$, \cref{thm:weaker:erroneous-lipschitz} applied to $f$ and $\estdiff f$ now establishes
    \begin{equation*}
        \norm{\nextestdiff-F'(\thisx)}_{\Lambda,*}^2
        =
        \norm{\estdiff f(\this z) - f'(\this z)}_{L M_z,*}^2
        \le
        \errLip{k},
    \end{equation*}
    where the $\errLip{k}$ satisfy
    $
        \sum_{n=0}^{k-1} \errLip{n}
        \le
        \Psi_1 + \trackingressum[1]\sum_{n=0}^{k-1}\norm{z^{n+1}-z^n}_{L M_z}
        =
        \Psi_1 + \trackingressum[1]\sum_{n=0}^{k-1}\norm{x^{n+1}-x^n}_\Lambda.
    $
    This verifies \cref{thm:weaker:erroneous-lipschitz} for $F$ and $\estdiff F$.

    We proceed with proving our specific claims.
    The all rely on \cref{lemma:pdps:m-estim} proving $M \ge \lambda\diag(M_z,\; 0)$, hence $\Lambda \le (L/\lambda)M$, where  $\Lambda \ge 0$.

    \Cref{item:pdps:inexact-expansion:weak}:
    We use \cref{thm:fb:growth:tracking}\,\cref{item:fb:growth:tracking:approx-cont}.

    \Cref{item:pdps:inexact-expansion:smoothness}:
    We use \cref{thm:fb:growth:tracking}\,\cref{item:fb:growth:tracking:local}, whose specific conditions we need to verify.
    We start with \eqref{eq:fb:growth:tracking:mono-local:assumptions}, i.e.,
    \begin{align*}
        0 & \le \MF \defeq M - 2(1-\zeta)\Gamma_F,
        \quad
        2\Gamma_G + 2p(1-\zeta)\Gamma_F \ge (p-1) M + \tilde\gamma \Lambda,
        \quad\text{and}
        \\
        0 & \le \LL \defeq (\inv{(2\zeta)}+\trackingressum^2\inv{\tilde\gamma})\Lambda
        \le (1-\eta)M.
    \end{align*}
    The first condition follows from our assumption $\lambda \le 2(1-\zeta)\gamma_f$ in \eqref{eq:pdps:inexact-expansion:mono:gamma-cond}.
    The third condition follows, likewise, from  \eqref{eq:pdps:inexact-expansion:mono:lambda-cond}.
    For the second condition, \cref{lemma:pdps:m-estim} proves $M \le \diag(\inv\gamma\gamma_z M_z,\; \inv\gamma\gamma_{^*} M_y)$ for $\gamma_z \defeq \gamma_g + p(1-\zeta)\gamma_f - \tilde\gamma L/2 \ge 0$ and $\gamma \defeq \min\{\gamma_z\tau,\gamma_{h^*}\sigma\}/2$.
    With this, the second condition holds if $\gamma \ge p-1$, which we have assumed in  \eqref{eq:pdps:inexact-expansion:mono:gamma-cond}.
    This proves \eqref{eq:fb:growth:tracking:mono-local:assumptions}.

    Taking $\delta \defeq \lambda\delta_z$, \eqref{eq:pdps:init-cond} implies, as required, $x^0 \in \openball_{\MF}(\basex, \sqrt{\delta^2 - \Psi_p/\tilde\gamma})$ and $\Psi_p < \tilde\gamma\delta^2$.
    By $M \ge \lambda\diag(M_z,\; 0)$\, we have
    $\MF \ge (\lambda-2(1-\zeta)\gamma_f)\diag(M_z, 0)=\diag(\bar M_z, 0)$,
    hence $\openball_{\MF}(\basex, \delta)  \subset \openball_{\bar M_z}(\basez, \delta_z) \times \Dom h_* \subset \zlocalset \times \Dom h_* = \localset$.
    By construction and assumption, we have $\LL \ge 0$.
    The claim now follows from \cref{thm:fb:growth:tracking}\,\cref{item:fb:growth:tracking:local}.

    \Cref{item:pdps:inexact-expansion:mono}: Completely analogous to \cref{item:pdps:inexact-expansion:smoothness}, based on \cref{thm:fb:growth:tracking}\,\cref{item:fb:growth:tracking:mono-local}.
\end{proof}

Now that we have provided step length and growth conditions that prove \cref{,ass:fb:descent,ass:fb:non-escape,ass:fb:approx-cont} for single-loop PDPS for bilevel problems, we can use \cref{thm:fb:subdiff,thm:fb:local:values,thm:fb:local:weak} to prove different forms of convergence.
In fact, further specialising \cref{thm:fb:local:values} to the PDPS, besides inexactness, as a novelty compared to  \cite{tuomov-nlpdhgm-redo,tuomov-nlpdhgm-general,tuomov-nlpdhgm-block,gao2023alternative}, subject to $h_*$ having a bounded domain, we get an estimate on the convex envelope of the objective, i.e., the Fenchel biconjugate.
In non-reflexive spaces, we define the latter as a function in $X$ instead of $X^{**}$ by taking first the conjugate and then the equivalently defined preconjugate: $h^{**} \defeq (h^*)_*$.

\begin{corollary}
    \label{cor:pdps:values}
    Let the assumptions of \cref{thm:pdps:inexact-expansion}\,\cref{item:pdps:inexact-expansion:mono} as well as \eqref{eq:pdps:inexact-expansion:smoothness} hold for $p=1$.
    Also suppose that $\Dom h_*$ is bounded.
    Then, for the \term{ergodic iterates} $\tilde z^N \defeq \frac{1}{N}\sum_{k=0}^{N-1} \this{z}$, for all $N \in \N$, we have
    \[
        [f + g + h \circ K]^{**}(\tilde z^N) \le [f + g + h \circ K](\base{z})
        + \sup_{\hat{y} \in \Dom h_*}
            \frac{1}{2N}\norm{(z^0, y^0)-(\base z, \hat{y})}_{\MF}^2
            + \frac{\sum_{k=0}^{N-1} e_{1,k}}{2\tilde\gamma N}.
    \]
    Here $[f + g + h \circ K](\base{z})=[f + g + h \circ K]^{**}(\base{z})$ if $\base{z}$ is a global minimiser of $f + g + h \circ K$.
\end{corollary}

\begin{proof}
    \Cref{thm:pdps:inexact-expansion}\,\cref{item:pdps:inexact-expansion:mono} with $p=1$ proves \cref{ass:fb:non-escape} option \cref{item:fb:non-escape:any} at $\basex$.
    Likewise, since we have assumed \eqref{eq:pdps:inexact-expansion:smoothness}, the proof of \cref{thm:pdps:inexact-expansion}\,\cref{item:pdps:inexact-expansion:smoothness} shows \eqref{eq:fb:three-point-smoothness:approx} and $\localset \defeq \zlocalset \times \Dom h_* \supset \openball_M(\basex, \delta)$ at any $\hat x \in \hat X \defeq \{\basez\} \times \Dom h_*$ with $\errDescBarEmph{k}(\hat x) = \errDescBar{k}(\hat x) = e_{1,k}/(2\tilde\gamma)$.
    \Cref{thm:fb:local:values} now establishes
    \begin{equation}
        \label{eq:pdps:local:values:ergodic}
        \sup_{\hat x \in \hat X} \sum_{k=0}^{N-1} \gap(\nextx; \hat x)
        \le
        \sup_{\hat x \in \hat X}\left(
            \frac{1}{2}\norm{x^0-\hat x}_{\MF}^2
            + \sum_{k=0}^{N-1}  \frac{e_{1,k}}{2\tilde\gamma}
        \right)
        \quad\text{for all}\quad N \in \N.
    \end{equation}

    Let $\hat x = (\base z, \hat y) \in \hat X$.
    With the expression of \cref{ex:fb:pdps:gap} for the gap, we expand and estimate using the definition of the Fenchel (bi)conjugate and $h^{**}=h$ as well as $[f+g]^{**} \le f+g$ that
    \[
        \begin{aligned}[t]
        \gap(\nextx; \hat x)
        &
        =
        ([f+g](\nexxt{z}) + \dualprod{K\nexxt{z}}{\hat{y}} - h_*(\hat{y})) - ([f+g](\base z) + \dualprod{K\base z}{\nexty} - h_*(\nexty))
        \\
        &
        \ge
        \bigl([f+g]^{**}(\nexxt{z}) + \dualprod{K\nexxt{z}}{\hat{y}} - h_*(\hat{y})\bigr) - [f + g + h \circ K](\base z).
        \end{aligned}
    \]
    Summing over $k \in \{0,\ldots,N-1\}$,
    taking the supremum over $\hat{y} \in \Dom h_*$, and
    using Jensen's inequality, therefore
    \[
        \sup_{\hat{y} \in \Dom h_*}\sum_{k=0}^{N-1}
        \gap(\nextx; \hat x)
        \ge
        N [(f+g)^{**} + h \circ K](\tilde z^N) - N[f + g + h \circ K](\base z).
    \]
    Denoting the infimal convolution by $\infconv$, we have
    \[
        f + g + h \circ K \ge [f+g + h \circ K]^{**} = ((f+g)^* \infconv [h \circ K]^*)^* = (f+g)^{**} + h \circ K.
    \]
    Moreover, the inequality is an equality at a global minimiser (or if $f$ is convex).
    Now the claim follows from \eqref{eq:pdps:local:values:ergodic}.
\end{proof}

\begin{remark}[Dual strong monotonicity not required]
    \label{rem:pdps:no-dual-sc}
    Inexact inner solutions force $\tilde\gamma>0$.
    In this case, $f+g$ has to be locally strongly subdifferentiable to satisfy \eqref{eq:pdps:inexact-expansion:mono:gamma-cond} or \eqref{eq:pdps:inexact-expansion:smoothness:gamma-cond}.
    When $p=1$, as in \cref{cor:pdps:values}, $h^*$, however, does not have to be strongly subdifferentiable.
    Practically, this means that we do not have to apply Moreau–Yosida regularisation to $h$.
    This is a significant improvement over \cite{jensen2022nonsmooth} and even over works on exact nonconvex PDPS; see \cite{tuomov-firstorder}.
    It largely arises from the more optimal analysis based on the splitting of $\GF$ and $\GG$ in \cref{eq:fb:three-point-monotonicity:approx,eq:fb:three-point-smoothness:approx}.
\end{remark}

\begin{remark}
    Taking $p>1$ in the proof of \cref{cor:pdps:values}, linear convergence rates could be obtained as in \cref{cor:fb:local:linear} for the iterates.
\end{remark}

We finally consider adjoint mismatch as in \cite{lorenz2023mismatch}, keeping everything else exact.

\begin{theorem}[PDPS with adjoint mismatch]
    \label{thm:fb:growth:mismatch}
    Assume the setup of \cref{ex:fb:pdps} with $\tau\sigma\norm{K}^2 \le 1$ and, for simplicity, $f=0$ and Hilbert $Z$ and $Y$.
    Suppose $\Dom h_*$ is bounded, and that $g$ and $h_*$ are, respectively, $\gamma_g $- and $\gamma_{h_*}$-strongly convex for some $\gamma_g>0$ and $\gamma_{h_*} \ge 0$.
    Let $\gamma \defeq \min\{\gamma_g\tau/4, \gamma_{h_*}\sigma/2\}$.
    In the PDPS \eqref{eq:pdps:alg}, replace $K^*$ with a “mismatched” adjoint $K^{*\approx}$.
    Then, for any $\basex \in Z \times Y$ and $p \in (1, 1+2\gamma]$, \cref{ass:fb:non-escape}\,\cref{item:fb:non-escape:any} holds with $\LL=0$, $\basexset=Z \times Y$, $\delta=\infty$, $r_p \le \varepsilon/(1-p)$, and
    \[
        \errMono{k}(\basex)=\frac{1}{2\gamma_g}\norm{(K^{*\approx}-K^*)\thisy}_{Z}^2 \le \varepsilon \defeq \frac{1}{2\gamma_g}(\norm{K^{*\approx}-K^*}\diam\Dom h_*)^2.
    \]
\end{theorem}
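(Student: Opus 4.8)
The plan is to verify the three-point monotonicity estimate \eqref{eq:fb:three-point-monotonicity:approx} with $\breve\Lambda=0$ and $\gamma=\min\{\gamma_g\tau/4,\gamma_{h^*}\sigma/2\}$, and then check the bookkeeping demanded by \cref{ass:fb:non-escape}\,\cref{item:fb:non-escape:any}. Since $f=0$, no differential estimate enters; the only inexactness is the replacement of $K^*$ by $K^{*\approx}$ in \eqref{eq:pdps:alg}. As preliminaries I would note that $\tau\sigma\norm{K}^2\le 1$ is precisely \cref{ass:pdps:step-length-cond} with $\lambda=0$, $M_z=M_y=\Id$, $K_z=K$ and $K_y=\Id$, so \cref{lemma:pdps:m-estim} gives $M\ge 0$; and that with $\delta=\infty$ we have $\openball_M(\basex,\delta)=Z\times Y=\Omega_{\basex}$, so no escape from $\Omega_{\basex}$ is possible, the ``whenever $\{x^n\}\subset\Omega_{\basex}$'' qualifiers are vacuous, and the initialisation requirement of \cref{ass:fb:non-escape} is automatic.

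The core step is the decomposition $\totalest{k+1}=-M(\nextx-\thisx)=\hat h_{k+1}+\bigl((K^{*\approx}-K^*)\thisy,\;0\bigr)$ with $\hat h_{k+1}\in H(\nextx)$; that is, along the mismatched recursion the defect enters as a purely \emph{primal} additive error. I would obtain this by reading off the optimality conditions of the two proximal maps in \eqref{eq:pdps:alg}: the $y$-step being computed exactly contributes an honest element of $\subdiff h^*(\nexty)-K\nexxt z$, whereas in the $z$-step an honest $g_{k+1}\in\subdiff g(\nexxt z)$ appears, and a short rearrangement matching the spurious $K^{*\approx}\thisy$ against the $K^*\nexty$ required by $H(\nextx)$ (using the $K^*(\nexty-\thisy)$ term already present in $-M(\nextx-\thisx)$) isolates the stated error.

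With this in hand, take $0\in H(\basex)$ as the base selection, as \cref{item:fb:non-escape:any} permits, and use $H=F'+\subdiff G+\Xi=\subdiff G+\Xi$ (as $F'=0$; see \eqref{eq:fb:generic-problem}) with $\Xi$ skew-adjoint: the $K$-cross terms cancel in $\dualprod{\hat h_{k+1}}{\nextx-\basex}_{X^*,X}$, and the $\gamma_g$- and $\gamma_{h^*}$-strong monotonicity of $\subdiff g$ and $\subdiff h^*$ give $\dualprod{\hat h_{k+1}}{\nextx-\basex}_{X^*,X}\ge\gamma_g\norm{\nexxt z-\basez}_Z^2+\gamma_{h^*}\norm{\nexty-\basey}_Y^2$. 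Young's inequality on the perturbation, $\dualprod{(K^{*\approx}-K^*)\thisy}{\nexxt z-\basez}\ge-\tfrac{\gamma_g}{2}\norm{\nexxt z-\basez}_Z^2-\tfrac{1}{2\gamma_g}\norm{(K^{*\approx}-K^*)\thisy}_Z^2$, consumes half of the primal term and produces exactly $\errMono{k}(\basex)$; and since $\tau\sigma\norm{K}^2\le 1$, Young's inequality also gives $\norm{\nextx-\basex}_M^2\le\tfrac{2}{\tau}\norm{\nexxt z-\basez}_Z^2+\tfrac{2}{\sigma}\norm{\nexty-\basey}_Y^2$, so that $\gamma\norm{\nextx-\basex}_M^2\le\tfrac{\gamma_g}{2}\norm{\nexxt z-\basez}_Z^2+\gamma_{h^*}\norm{\nexty-\basey}_Y^2$ for our choice of $\gamma$. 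Chaining the three inequalities establishes \eqref{eq:fb:three-point-monotonicity:approx} with $\breve\Lambda=0$.

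It then remains to close the conditions of \eqref{eq:fb:local:error-assumption}. Since $\nexty\in\Dom h^*$ for $k\ge 1$ (it is a value of $\prox_{\sigma h^*}$) and likewise $y^0\in\Dom h^*$, the assumed boundedness of $\Dom h^*$ bounds $\errMono{k}(\basex)\le\varepsilon$, and a geometric-series estimate then gives $r_p=\sup_N\sum_{k=0}^{N-1}p^{k-N}\errMono{k}(\basex)\le\varepsilon/(p-1)<\infty=\tfrac12\delta^2$, while $p\le 1+2\gamma$ is exactly the ceiling $\MAX p_\gamma$ for option \cref{item:fb:non-escape:any}. I expect the one genuine obstacle to be the decomposition of the second paragraph: getting the signs right in \eqref{eq:fb:implicit} and confirming that the mismatch appears as a purely primal additive error leaving $\hat h_{k+1}\in H(\nextx)$ intact, so that the skew-symmetric cancellation and the strong monotonicity go through unchanged. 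Everything after that is a routine double use of Young's inequality with constants tuned to $\gamma$.
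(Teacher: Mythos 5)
Your proposal matches the paper's proof almost line for line: the same decomposition $\totalest{k+1}=x_{k+1}^*+\bigl((K^{*\approx}-K^*)\thisy,\,0\bigr)$ with $x_{k+1}^*\in H(\nextx)$, the same use of $\gamma_g$- and $\gamma_{h^*}$-strong monotonicity of $\subdiff g$ and $\subdiff h^*$ plus skew-adjointness of $\Xi$, the same Young split absorbing half the primal strong monotonicity, the same $M$-norm bound (you derive it directly via Young while the paper invokes \cref{lemma:pdps:m-estim}, but the constants $\gamma=\min\{\gamma_g\tau/4,\gamma_{h^*}\sigma/2\}$ come out identically), and the same geometric-series closure of \eqref{eq:fb:local:error-assumption}. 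Your second paragraph, which derives the decomposition from the proximal optimality conditions of \eqref{eq:pdps:alg}, supplies detail the paper leaves tacit and is worth keeping. One small remark: your computation $r_p\le\varepsilon/(p-1)$ is the correct one; the theorem statement's $\varepsilon/(1-p)$ appears to be a sign typo, as that quantity would be negative for $p>1$.
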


\begin{proof}
    With $M$, $G$, and $F$ given by \cref{ex:fb:pdps}, the abstract algorithm \eqref{eq:fb:implicit} reads
    \[
        -M(\nextx-\thisx)
        =:
        \totalest{k+1}
        =
        x_{k+1}^* + ((K^{*\approx}-K^*)\thisy, 0)
        \quad\text{for a}\quad x_{k+1}^* \in H(\nextx).
    \]
    Here $H$ is defined in \eqref{eq:fb:generic-problem}.
    Let $\basex \in \inv H(0)$.
    Using \cref{lemma:pdps:m-estim} in the final step, we estimate
    \[
        \begin{aligned}
        \dualprod{\totalest{k+1} -  H(\basex)}{\nextx - \bar x}_{X^*,X}
        &
        =
        \dualprod{\totalest{k+1} -  x_{k+1}^*}{\nextx - \bar x}_{X^*,X}
        + \dualprod{x_{k+1}^* -  H(\basex)}{\nextx - \bar x}_{X^*,X}
        \\
        &
        \ge
        \iprod{(K^{*\approx}-K^*)\thisy}{\nexxt{z}-\bar z}
        + \gamma_g \norm{\nexxt{z} - \bar z}_{Z}^2
        + \gamma_{h_*} \norm{\nexxt{y} - \bar y}_{Y}^2
        \\
        &
        \ge
        \frac{\gamma_g}{2} \norm{\nexxt{z} - \bar z}_{Z}^2
        + \gamma_{h_*} \norm{\nexxt{y} - \bar y}_{Y}^2
        - \frac{1}{2\gamma_g}\norm{(K^{*\approx}-K^*)\thisy}_{Z}^2
        \\[-0.5ex]
        &
        \ge
        \gamma \norm{\nextx - \bar x}_{M}^2
        - \errMono{k}(\basex).
        \end{aligned}
    \]
    Therefore, \eqref{eq:fb:three-point-monotonicity:approx} holds with $\GG = \gamma M$ and $\GF=0$.
    Moreover, we have
    $
        \sum_{k=0}^{N-1} p^{k-N}
        \le
        1/(p-1)
    $
    for any $p \in (1,1+2\gamma]$, verifying \eqref{eq:fb:local:error-assumption} and consequently \cref{ass:fb:non-escape}\,\cref{item:fb:non-escape:any}.
\end{proof}

\section{Numerical illustrations and application examples}
\label{sec:numerical}

We now treat two application examples: electrical impedance tomography and, as a demonstration that crosses the boundaries between PDE-constrained and bilevel optimisation, minimal surface control.

\pgfplotsset{
    every axis label/.append style = {font = \scriptsize},
    every tick label/.append style = {font = \scriptsize},
    ignore legend/.style={
        every axis legend/.code={\let\addlegendentry\ignorelegendentry}
    },
    log x ticks with fixed point/.style={
        xticklabel={
            \pgfkeys{/pgf/fpu=true}
            \pgfmathparse{exp(\tick)}%
            \pgfmathprintnumber[fixed relative, precision=3]{\pgfmathresult}
            \pgfkeys{/pgf/fpu=false}
        },
    },
    log y ticks with fixed point/.style={
        yticklabel={
            \pgfkeys{/pgf/fpu=true}
            \pgfmathparse{exp(\tick)}%
            \pgfmathprintnumber[fixed relative, precision=3]{\pgfmathresult}
            \pgfkeys{/pgf/fpu=false}
        },
    },
    compat=1.18
}

\def\conductivity{z}
\def\conductivityLow{\underline\conductivity}
\def\conductivityHigh{\overline\conductivity}
\def\WOp{\Sigma^{-1/2}}
\def\EITmeas{\mathscr{I}}
\def\Hs{\mathcal{H}}
\def\dif{\,\text{d}}
\def\Lmeas{\xi}
\def\BoundMeas{s}
\def\measindex{m}
\def\Nmeas{N}
\def\Nelec{E}

\ifSubfilesClassLoaded{
    \section*{Electrical Impedance Tomography}
}{
    \subsection{Electrical Impedance Tomography}
    \label{sec:eit}
}

\paragraph{Problem formulation}

We start with Electrical Impedance Tomography (EIT).
We seek to construct an electrical conductivity $\conductivity \in \BVspace(\Omega)$ inside a bounded Lipschitz domain $\Omega \subset \R^2$ from boundary measurements of currents at a number $\Nelec \ge 1$ of electrodes.
The same electrodes are excited with prescribed potentials.
This measurement scheme is repeated for multiple combinations of potentials at the different electrodes.
With
$
    \Hs \defeq H^1(\Omega) \times \R^{\Nelec},
$
write
\[
    \bar u
    \defeq
    (\bar u_1, \ldots, \bar u_\Nmeas)
    \defeq
    ((u_1, I_1),\ldots,(u_{\Nmeas}, I_\Nmeas)) \in U \defeq \Hs^{\Nmeas}
\]
for a vector of inner potentials $\bar u_\measindex \in H^1(\Omega)$ and electrode currents $I_\measindex \in \R^{\Nelec}$ over multiple measurements $\measindex=1,\ldots,\Nmeas$.
Imposing total variation regularisation on the conductivity, our problem then is
\begin{equation}
    \label{eq:eit:problem}
    \min_{\conductivity \in \BVspace(\Omega)}~ \frac12\sum_{\measindex=1}^{\Nmeas} \norm{I_\measindex - \EITmeas_\measindex}_{\inv\Sigma}^2 + \delta_{[\conductivityLow,\conductivityHigh]}(\conductivity) + \alpha\TV(\conductivity)
\end{equation}
with $\bar u$ and $\conductivity$ subject to the Complete Electrode Model (CEM) \cite{cheng1989electrode}.
In weak form, this is (see, e.g., \cite[§3]{tuomov2024online-eit})
\begin{subequations}
\label{eq:eit:cem}
\begin{gather}
    \label{eq:eit:bilin}
    B_\conductivity(\bar u_\measindex,\bar v_\measindex) = L_\measindex(\bar v_\measindex)
    \quad\text{for all}\quad
    \bar v_\measindex = (v_\measindex, V_\measindex) \in \Hs
\shortintertext{for the bilinear form $B_\conductivity: \Hs \times \Hs \to \R$,}
    \label{eq:eit:bilin:form}
    B_\conductivity(\bar u_\measindex, \bar v_\measindex)
    \defeq
    \int_{\Omega} \conductivity \grad u_\measindex \cdot \grad v_\measindex \dif\Lmeas + \sum_{i=1}^{\Nelec} \frac{1}{\zeta_i}\int_{\partial \Omega_{i}} u_\measindex (v_\measindex - V_{\measindex,i})\dif\BoundMeas
    + \sum_{i=1}^{\Nelec} I_{\measindex,i} V_{\measindex,i},
\shortintertext{and the linear form $L_\measindex \in \Hs^*$,}
    L_\measindex(\bar v_\measindex)
    \defeq
    \sum_{i=1}^{\Nelec} \frac{1}{\zeta_i} \int_{\partial \Omega_{i}} U_{\measindex,i}(v_\measindex-V_{\measindex,i}) \dif\BoundMeas.
\end{gather}
\end{subequations}
Here $\partial\Omega_{i}$ are the electrode surfaces, $\zeta_i>0$ corresponding contact impedances, and $\nu$ is the outward unit normal.
The electrode potentials $U_{\measindex,i}$ for each measurement  $\measindex=1,\ldots,\Nmeas$ and electrode $i=1,\ldots,\Nelec$ are prescribed, while the resulting electrode currents $I_{\measindex,i}$ are determined by the model.

The upper and lower bounds $0<\conductivityLow<\conductivityHigh$ make the PDE \eqref{eq:eit:cem} well-posed and enforce $\conductivity \in L^\infty(\Omega)$.
The symmetric positive definite matrix $\Sigma \in \R^{\Nelec \times \Nelec}$ models noise levels and data imprecision in the measurements $\EITmeas_\measindex$.
Together with the regularisation parameter $\alpha>0$, the distributional derivative $D\conductivity$ of the conductivity is penalised by the isotropic total variation regulariser
\begin{equation}
    \label{eq:eit:tv}
    \begin{split}
    \TV(\conductivity)
    &
    \defeq
    \sup\left\{
        \int \iprod{\divergence y(\xi)}{\conductivity(\xi)} \d \xi
        \,\middle|\,
        y \in C_c^\infty(\Omega; \R^2),\, \sup_{x \in \Omega} \norm{y(x)}_2 \le 1
    \right\}
    \\
    &
    =
    \sup\left\{
        \int \iprod{\divergence y(\xi)}{\conductivity(\xi)} \d \xi
        \,\middle|\,
        y \in H_0(\divergence, \Omega),\, \sup_{x \in \Omega} \norm{y(x)}_2 \le 1
    \right\}.
    \end{split}
\end{equation}
The latter form follows from density arguments.
By definition, $\TV(z) < \infty$  for $z \in \BVspace(\Omega)$.
The idea of total variation regularisation is to promote sparsity of gradients of the conductivity field: they should be concentrated on object boundaries.
Similar problem formulations for EIT have been studied in \cite{voss2018imaging,voss2019three,jauhiainen2021nonplanar,tuomov2024online-eit}.

Let $Z=\BVspace(\Omega) \isect L^\infty(\Omega)$ and $Y=H_0(\divergence, \Omega)$.
We can write \eqref{eq:eit:cem} over all $m \in \{1,\ldots,\Nmeas\}$ as $T(\bar u, \conductivity)=0$, i.e., \eqref{eq:intro:tsu}, for
$
    T: \Hs^\Nmeas \times Z \to (\Hs^\Nmeas)^*
$
defined by
\[
    T(\bar u, \conductivity) \defeq (B_\conductivity(\bar u_1, \freevar) - L_1, \ldots, B_\conductivity(\bar u_\Nmeas, \freevar) - L_\Nmeas).
\]
We denote the corresponding solution mapping by $S_{\bar u}: Z \to \Hs^\Nmeas$.
Also setting
\begin{gather*}
    f(\conductivity) \defeq j(S_{\bar u}(\conductivity)),
    \quad
    j(\bar u) \defeq \frac12\sum_{\measindex=1}^{\Nmeas} \norm{I_\measindex - \EITmeas_{\measindex}}_{\inv\Sigma}^2,
    \quad
    g(\conductivity) \defeq \delta_{[\conductivityLow,\conductivityHigh]}(\conductivity),
    \\
    K = -\divergence^* \in \linear(Z; Y^*),
    \quad\text{and}\quad
    h=(h_*)^*
    \quad\text{for}\quad
    h_*(y) = \begin{cases}
        0, & \sup_{x \in \Omega} \norm{y(x)}_2 \le \alpha, \\
        \infty, & \text{otherwise,}
    \end{cases}
\end{gather*}
the problem \eqref{eq:eit:problem}\&\eqref{eq:eit:cem} then reads
\begin{equation}
    \label{eq:eit:problem:fg}
    \min_{\conductivity \in Z}~f(\conductivity) + g(\conductivity) + h(Kz)
\end{equation}

\paragraph{Outer algorithm}

To avoid smoothing of the total variation term, it is most convenient to use the primal-dual method of \cref{ex:fb:pdps,sec:outer-examples:pdps} that, based on \eqref{eq:eit:tv}, reformulates the total variation as a dual ball constraint and a bilinear term.
Using a proximal reformulation of primal-dual optimality conditions, also (damped) semismooth Newton (SSN) should be practically---if not theoretically---applicable.
However, due to the complexity of the resulting second-order system, its efficient implementation is outside the scope of the present work.
We will compare our methods against the SSN on the minimal surface problem.

We, thus, apply the primal-dual method \eqref{eq:fb:pdps:implicit} to \eqref{eq:eit:problem:fg}, endowing $Z=\BVspace(\Omega) \isect L^\infty(\Omega)$ with the norm $\norm{u}_Z \defeq \norm{u}_{\BVspace(\Omega)} + \norm{u}_{L^\infty(\Omega)}$.
Then the trivial injection $M_z: Z \hookrightarrow Z^*$, $z \mapsto \iprod{z}{\freevar}_{L^2(\Omega)}$ is continuous.\footnote{This follows from the $L^\infty$ topology. Without it, Poincaré's inequality should be used.}
Likewise, we can trivially embed $H_0(\divergence, \Omega)$ into $H_0(\divergence, \Omega)^*$ to obtain $M_y \in \linear(Y; Y^*)$, $y \mapsto \iprod{y}{\freevar}_{L^2(\Omega; \R^2)}$.
However, this option, which has $\norm{y}_{M_y}^2=\int_\Omega \norm{y(\xi)}^2 \d\xi$, and which we will use in numerical practise, will not---in infinite dimensions---satisfy \cref{ass:pdps:step-length-cond}, required by the convergence \cref{cor:pdps:values}.

An alternative that satisfies \cref{ass:pdps:step-length-cond} for appropriate step lengths, $K_z: Z \hookrightarrow L^2(\Omega)^*$ the trivial injection, and $K_y=-\divergence \in \linear(H_0(\divergence, \Omega); L^2(\Omega))$, is $M_y=\iprod{\divergence \freevar}{\divergence \freevar}_{L^2(\Omega)}$.
An alternative that satisfies \cref{ass:pdps:step-length-cond} for appropriate step lengths, $K_z: Z \hookrightarrow L^2(\Omega)^*$ the trivial embedding, and $K_y=-\divergence \in \linear(H_0(\divergence, \Omega); L^2(\Omega))$, is $M_y=\iprod{\divergence \freevar}{\divergence \freevar}_{L^2(\Omega)}$. 
This choice will not, however, produce an easily computable dual proximal step from the implicit algorithm \eqref{eq:fb:pdps:implicit}: it involves solving a problem of the form $\min_{y \in H_0(\divergence, \Omega)} \norm{\divergence(y-\tilde y)}_{L^2(\Omega)}^2$ subject to $\sup_{\xi \in \Omega} \norm{y(\xi)}_2 \le \alpha$.
We, therefore, use the simpler $M_y$ in finite element spaces. Then the proximal map is a simple projection, and the overall algorithm \eqref{eq:fb:pdps:implicit} reduces to the Hilbert space form \cref{eq:pdps:alg}.

\begin{remark}
    We may not have to solve the expensive proximal map exactly.
    Following our general theme, we could possibly only take a single step of forward-backward splitting in $L^2(\Omega)$ towards the solution of the proximal map, and then use the general inexact convergence theory of \cref{sec:fb}.
    However, we leave the specifics of proximal map approximation outside the scope of this work.
\end{remark}

\paragraph{Differential estimation}

To estimate $f'(\this\conductivity)$ in \eqref{eq:fb:pdps:implicit}, with a small modification, we follow the general forward PDE splitting approach of \cref{thm:tracking:inner-linear-system-splitting}, and the adjoint splitting approach of \cref{thm:tracking:reduced-adjoint-splitting}, both combined with either the Gauss–Seidel splitting of \cref{ex:splitting:Gauss–Seidel}, or with exact solution.
Other alternatives are also imaginable.

Let $N_k+M_k$ be an admissible splitting (\cref{ass:primal-admissible-splitting}) of $B_{\this\conductivity}$.
For the forward PDE, we then get from \cref{thm:tracking:inner-linear-system-splitting} the \textbf{inner step}
\begin{equation}
    \label{eq:eit:inner-step}
    \nexxt{\bar u_\measindex} = \inv N_k (L_\measindex - M_k \this{\bar u_\measindex})
    \quad\text{for all}\quad
    \measindex=1,\ldots,\Nmeas.
\end{equation}

Regarding the adjoint, for $\bar u = S_{\bar u}(z)$, and any $\bar w = (\bar w_1,\ldots,\bar w_\Nmeas) \in (\Hs^\Nmeas)^{**}=\Hs^\Nmeas$,  $\bar h = (\bar h_1, \ldots, \bar h_\Nmeas) \in \Hs^\Nmeas$, and $h_z \in Z$, we have
\begin{gather}
    \label{eq:eit:wTz}
    \bar w \diffwrt{T}{\conductivity}(\bar u, \conductivity)
    =
    \sum_{\measindex=1}^\Nmeas
    \diffwrt{B_\conductivity}{\conductivity}(\bar u_\measindex, \bar w_\measindex),
    \quad\text{i.e.,}\quad
    \bar w \diffwrt{T}{\conductivity}(\bar u, \conductivity)h_z
    =
    \int_{\Omega} h_z \grad u_\measindex \cdot \grad w_\measindex \dif\Lmeas,
\shortintertext{and}
    \nonumber
    \bar w \diffwrt{T}{\bar u}(\bar u, \conductivity)\bar h
    =
    (B_z(\bar h_1, \bar w_1)), \ldots, B_z(\bar h_\Nmeas, \bar w_\Nmeas)).
\end{gather}
Hence, the \textbf{reduced adjoint equation} \eqref{eq:tracking:reduced-adjoint} that defines $S_{\bar w}(\conductivity)=\bar w$, reads
\begin{equation}
    \label{eq:eit:reduced-adjoint}
    B_\conductivity(\freevar, \bar w_\measindex) + \diffwrt{j}{\bar u_\measindex}(\bar u) = 0
    \quad\text{for all}\quad
    \measindex=1,\ldots,\Nmeas.
\end{equation}
From \eqref{eq:tracking:reduced-adjoint-diff-transformation}, we then obtain
\begin{equation}
    \label{eq:eit:f-prime}
    f'(\conductivity)
    =
    \bar w \diffwrt{T}{\conductivity}(\bar u, \conductivity)
    =
    \sum_{\measindex=1}^\Nmeas \diffwrt{B_\conductivity}{\conductivity}(\bar u_\measindex, \bar w_\measindex),
    \quad\text{i.e.,}\quad
    f'(z)h_z = \sum_{\measindex=1}^\Nmeas \int_{\Omega} h_z \grad u_\measindex \cdot \grad w_\measindex \dif\Lmeas.
\end{equation}

Observe that $\diffwrt{j}{\bar u_\measindex}(\bar u) = (0, \diffwrt{j}{I_\measindex}(\bar u))=(0, \inv\Sigma(I_\measindex-\EITmeas_\measindex)) \in H^1(\Omega) \times \R^\Nelec$.
Hence, instead of solving \eqref{eq:eit:reduced-adjoint} for all $i=1,\ldots,\Nmeas$, we take a basis $\{e_1,\ldots,e_\Nelec\}$ of $\R^\Nelec$, and consider for the unknown $\tilde w_i \in \Hs$ the \textbf{modified reduced adjoint equation}
\[
    B_\conductivity(\freevar, \tilde w_i) + (0, e_i) = 0,
    \quad
    \text{for all}
    \quad
    i=1,\ldots,\Nelec.
\]
Representing $\diffwrt{j}{\bar u_\measindex}(\bar u) = (0, \sum_{i=1}^\Nelec e_i \diffwrt{j}{I_\measindex}(\bar u)e_i)$, we can then solve $\bar w_\measindex = \sum_{i=1}^\Nelec \tilde w_i \diffwrt{j}{I_\measindex}(\bar u)e_i$.
This helps to stabilise the Gauss–Seidel approach without any additional computational cost when the number of electrodes $\Nelec$ is not greater than the number of measurements $\Nmeas$.

With this modification, we follow the splitting approach of \cref{thm:tracking:reduced-adjoint-splitting}.
For $N_k+M_k$ an adjoint admissible splitting (\cref{ass:adjoint-admissible-splitting}) of $B_{\this\conductivity}$, it gives the \textbf{adjoint step}
\begin{equation}
    \label{eq:eit:adjoint-step}
    \nexxt{\tilde w}_i \defeq -\inv N_k((0, e_i)+M_k\this{\tilde w_i})
    \quad
    \text{for all}
    \quad
    i=1,\ldots,\Nelec,
\end{equation}
and the \textbf{differential transformation}
\begin{equation}
    \label{eq:eit:diff-transform}
    \begin{split}
    \estdiff f(\this\conductivity)
    =
    \sum_{\measindex=1}^{\Nmeas} \diffwrt{B_{\this\conductivity}}{\conductivity}(\nexxt{\bar u}_\measindex, \nexxt{\bar w}_\measindex)
    &
    =
    \sum_{\measindex=1}^{\Nmeas} \sum_{i=1}^\Nelec
    \iprod{\grad_{I_\measindex} j(\nexxt{\bar u})}{e_i}
    \diffwrt{B_{\this\conductivity}}{\conductivity}(\nexxt{\bar u}_\measindex, \nexxt{\tilde w}_i)
    \\
    &
    =
    \sum_{\measindex=1}^{\Nmeas} \sum_{i=1}^\Nelec
    \iprod{\nexxt{I_\measindex} - \EITmeas_\measindex}{e_i}_{\inv\Sigma}
    \diffwrt{B_{\this\conductivity}}{\conductivity}(\nexxt{\bar u}_\measindex, \nexxt{\tilde w}_i).
    \end{split}
\end{equation}

Our \textbf{overall algorithm}, thus, consists of iterating for $k \in \N$ the steps
\begin{enumerate}[nosep]
    \item Compute $\estdiff f(\this\conductivity)$ through \cref{eq:eit:inner-step,eq:eit:adjoint-step,eq:eit:diff-transform}.
    \item Form $\nexxt\conductivity$ and the dual variable $\nexty$ by solving \eqref{eq:fb:pdps:implicit} (in finite element subspaces, \eqref{eq:pdps:alg}).
\end{enumerate}

\paragraph{Convergence}

We now explore, what would be required to prove the convergence of the above method using \cref{cor:pdps:values}.
We have already constructed our problem in agreement with \cref{eq:pdps:problem}, and have discussed the satisfaction of \cref{ass:pdps:step-length-cond} for the outer algorithm.
Beyond step length, growth, and local initialisation conditions---which are usually difficult to verify for nonconvex problems---we, therefore, need to verify \cref{ass:tracking:main} for the  inner, adjoint, and differential transformation steps \cref{eq:eit:inner-step,eq:eit:adjoint-step,eq:eit:diff-transform}, and we need to verify that $f'$ is Lipschitz.
The verification of \cref{ass:tracking:main} is done using \cref{thm:tracking:inner-linear-system-splitting,thm:tracking:reduced-adjoint-splitting}.
For the Gauss–Seidel option, the admissible splitting \cref{ass:primal-admissible-splitting,ass:adjoint-admissible-splitting}, required by these results, can, in principle, be verified with the help of \cref{ex:splitting:Gauss–Seidel}, after passing to a finite element subspace.

\textbf{Lipschitz properties (challenge, requires \emph{some} finite-dimensionality):}
The Lipschitz continuity of the solution mapping $S_{\bar u}$ and its derivative $S_{\bar u}'$ as functionals on $L^\infty(\Omega)$ are verified in \cite[§3]{tuomov2024online-eit}.
This is why we endow $Z$ with the joint $\BVspace$ and $L^\infty$ norm.
However, \cref{cor:pdps:values} requires the continuity to be with respect to $\norm{\freevar}_{M_z}$, i.e., with respect to the $L^2$ norm.
We know two ways to achieve this:
\begin{enumerate}[label=(\alph*),nosep]
    \item Restrict $\conductivity$ to a finite-element subspace, and use the equivalence of norms.
    \item Restrict $u$ and $w$ to a finite-element subspace, so that $\conductivity$ does not require the $L^\infty$ topology.
\end{enumerate}
The source of these difficulties is the tri-linear term $ \int_{\Omega} \conductivity \grad u_\measindex \cdot \grad v_\measindex \dif\Lmeas$ in \eqref{eq:eit:bilin:form}, where we can use Hölder's inequality in $L^2$ only once.
If we could prove additional regularity of the solutions to the EIT problem, another approach could be possible.

\Cref{thm:tracking:reduced-adjoint-splitting} also demands that the differentials of $T$ be Lipschitz (and bounded).
While the Lipschitz continuity of $\bar u \mapsto \diffwrt{T}{\bar u}(\bar u, z)$ is not difficult to verify due to the bounds $\underline z \le z \le \overline z$, that of $\bar u \mapsto \diffwrt{T}{z}(\bar u, z)$ faces the same difficulties as the Lipschitz continuity of $S_u$.
It has to be Lipschitz between the distances $\norm{\freevar}_{\Hs^\Nmeas}$ and $\norm{\freevar}_{M_z,L\Hs^\Nmeas}$, as defined in \cref{sec:semicon}.
Recalling \eqref{eq:eit:wTz}, we, thus, need for all $\bar v, \bar u \in \Hs^\Nmeas$ that
\[
    \sup_{\norm{h_z}_{L^2(\Omega)} \le 1} \sup_{\norm{\bar w}_{\Hs^\Nmeas} \le 1}
    \sum_{\measindex=1}^\Nmeas  \int_{\Omega} h_z \grad (u_\measindex -v_\measindex) \cdot \grad w_\measindex \dif\Lmeas \le L_{\diffwrt{T}{z};u} \norm{\bar u - \bar v}_{\Hs^\Nmeas}.
\]
Here, $h_\conductivity$ has lost the $L^\infty$ topology of $\conductivity$.
Practically, again, this means that either $h_\conductivity$ (hence, $\conductivity$) or both $u_\measindex$ and $v_\measindex$ must live in a finite-dimensional subspace.

Similar considerations apply to $C_{\diffwrt{T}{\conductivity}} \defeq \sup\{ \norm{\diffwrt{T}{\conductivity}(\nexxt{\bar u}, \this\conductivity)}_{M_z,\Hs^\Nmeas} \mid k \in \N \} < \infty$.
Moreover, we require here a bound on $\norm{\nexxt{\bar u}}_{\Hs^\Nmeas}$.
The $L^\infty$-Lipschitz continuity of $S_u$ and the bounds $\underline z \le z \le \overline z$ bound $\norm{S_u(\thisx)}_{\Hs^\Nmeas}$. If we solve $\nexxt{\bar u}$ exactly, we have our bound.
With the Gauss–Seidel approach, we \emph{may} have to take multiple inner iterations to ensure any given bound.

\begin{remark}
    \label{rem:eit:bounds}
    In fact, \cref{thm:weaker:erroneous-lipschitz} gives an a posteriori bound on $\norm{\nextu-S_u(\thisx)}$, and we have bounded $\norm{S_u(\thisx)}$.
    We, therefore, know that there exists \emph{some} bound on $\sup_{k \in \N} \norm{\nextu}$, we just do not know its exact magnitude.
    Indeed, this a posteriori bound depends on still unconstructed factors through $\Psi_p$, as well as on $\sum_{n=0}^{k-1} \norm{x^{n+1}-x^n}^2$.
    The latter is bounded by the convergence results of \cref{sec:fb} together with \cref{thm:fb:growth:tracking}.
    With some care, we expect to be able to reduce the dependence on the unknown factors.
    However, since we need to be careful to avoid circular reasoning, we have left the refinement of this route to future work.
\end{remark}

\textbf{Second-order growth (likely missing, but workarounds exist):}
We also have another challenge: \cref{cor:pdps:values} requires the local strong subdifferentiability of $f+g$ to satisfy \eqref{eq:pdps:inexact-expansion:mono:gamma-cond}.
However, recalling \cref{rem:pdps:no-dual-sc}, $h^*$, importantly, does not have to be strongly subdifferentiable, \emph{we do not need to smoothen the total variation}.
The primal strong subdifferentiability, in contrast, can be difficult to verify, and may not hold in a function space setting or if $\Nmeas \cdot \Nelec$ is much less than the number of nodes in a finite element grid for $\conductivity$; compare \cite[§3.3]{tuomov2024online-eit}.
However, due to the nonlinearity of $S_{\bar u}$, and the property only having to be local, the noncompliance is not clear-cut.
In the case of primal-only algorithms, the total variation term can compensate for the lack of growth of the data term through overall metric subregularity, see \cite[Appendix A]{jauhiainen2020relaxed} and \cite[§4.3]{tuomov-regtheory}.
Such studies have not been made for primal-dual methods, and would be far more challenging due to the dualisation of total variation.
Moreover, no growth properties would be required by the subdifferential convergence \cref{thm:fb:subdiff}, but this result is currently not applicable to the PDPS, only to basic forward-backward splitting.

A workaround is to add to the problem an additional squared norm of the conductivity. In practise, we have observed no need for it.

\textbf{Existence of solutions (not guaranteed in infinite dimensions):}
It should also be observed that there does not necessarily exist a dual solution: $y$ achieving the supremum in \eqref{eq:eit:tv}.
This requires additional regularity from the primal solution.

\paragraph{Numerical results}

We perform the experiments on P1 finite element grids of 5039 nodes (“fine grid”) and 2917 nodes (“coarse grid”). There are $\Nelec=16$ evenly spaced electrodes, and we make $\Nmeas=16$ measurements by setting $U_{\measindex,\measindex}=1$ and $U_{\measindex,i}=0$ for $i \ne \measindex$.
We illustrate the ground-truth conductivity in \cref{fig:eit:reco}, along with the reconstructions from boundary measurements.
The reconstructions are not perfect due to EIT being a \emph{highly} ill-posed inverse problem.
The measurement data  $\{\EITmeas_\measindex\}_{\measindex=1}^{\Nmeas}$ is generated from the ground-truth data by applying the forward PDE on the finer grid to simulate $I_\measindex$, and then adding a low level of noise. Further details can be found in our numerical implementation \cite{tuomov-tracking-codes} or in \cite{tuomov2024online-eit}, whose setup our demonstration mirrors, aside from that work including an additional dynamical aspect.

As the initial primal iterate we take $\conductivity \equiv 1$, and as the initial dual iterate $y \equiv 0$.
In the Gauss–Seidel alternative of the algorithm, for each outer primal-dual iteration, we take 7 (inner) Gauss–Seidel steps for the forward PDE, and 1 Gauss–Seidel step for the adjoint PDE. These 7 steps are required in practise for $\trackingressum$ (see \eqref{eq:tracking:ressum}) to be small enough for \eqref{eq:pdps:inexact-expansion:mono} or \eqref{eq:pdps:inexact-expansion:smoothness} to hold without making $\tau$ very small.

\Cref{fig:eit:performance} shows the algorithm performance in terms of both the function value relative to the initial iterate---which is very descriptive in this type of problems---and as a relative distance to an approximate solution to \eqref{eq:eit:problem}. The latter is formed by taking $T=100000$ iterations of our Gauss–Seidel based method.

In summary, our Gauss–Seidel approach cuts the CPU footprint in one sixth, confirming and significantly further improving upon the results of \cite{jensen2022nonsmooth} on a simpler related problem.

\begin{figure}[t]
    \centering
    \begin{subfigure}{0.3\textwidth}\centering
        \includegraphics[width=0.7\linewidth]{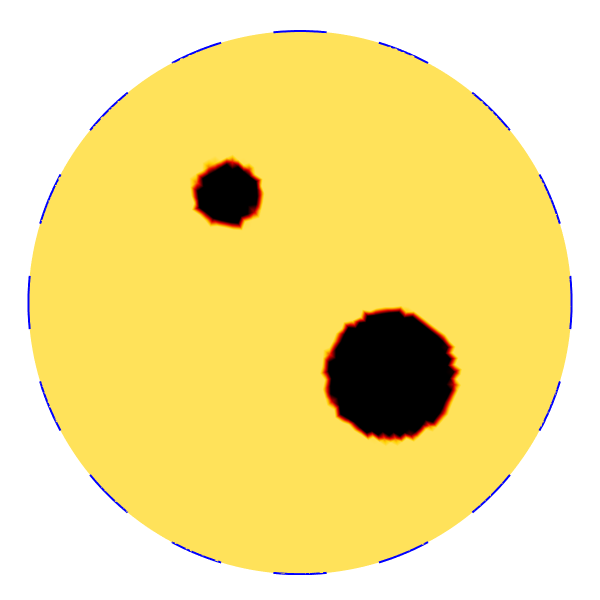}
        \caption{Ground truth}
    \end{subfigure}
    \begin{subfigure}{0.3\textwidth}\centering
        \includegraphics[width=0.7\linewidth]{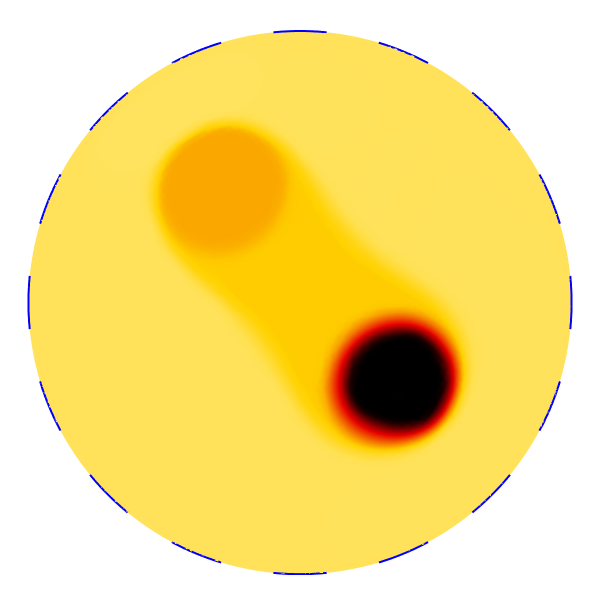}
        \caption{Reconstruction/exact}
    \end{subfigure}
    \begin{subfigure}{0.3\textwidth}\centering
        \includegraphics[width=0.7\linewidth]{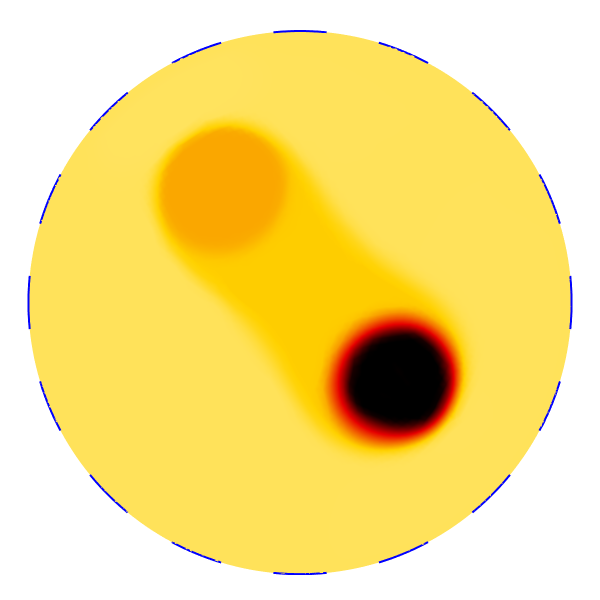}
        \caption{Reconstruction/GS}
    \end{subfigure}
    \caption{The ground-truth data for the EIT demonstration along with the reconstruction with both exact solutions of the PDE and its adjoint on each step, and with Gauss–Seidel (7 forward PDE steps, 1 adjoint step). The electrodes are also visible on the boundaries.}
    \label{fig:eit:reco}
\end{figure}

\begin{figure}[t]
    \centering
    \begin{tikzpicture}
        \begin{groupplot}[
            group style={
                group size=2 by 2,
                horizontal sep = 10ex,
                vertical sep = 13ex,
            },
            width=0.5\linewidth,
            height=0.3\linewidth,
            scaled x ticks=false,
            xminorticks=true,
            yminorticks=true,
            axis x line=bottom,
            axis y line=left,
            ymode=log,
            xmode=log,
            grid=major,
            label style={
                font=\small,
            },
            title style={
                at = {(0.0, 1.0)},
                anchor = south west
            },
            legend style={
                at = {(1.0, 1.0)},
                yshift=3pt,
                anchor = south east,
                inner sep=1pt,
                outer sep=1pt,
                draw=none,
                fill=none,
                font=\normalfont,
                legend columns=-1,
            },
            xlabel={CPU time (s) (logarithmic)},
        ]

            \nextgroupplot[
                title={Coarse grid},
                ylabel={Relative value},
                ignore legend,
                log basis y={10},
                ytickten={0,-1,-1.4},
            ]

            \addplot [color=Set2-A, line width=1pt]
                table [x=cpu_time, y=rel_value, col sep=comma]
                {eit/eit_gs_lowres.csv};
            \addlegendentry{Gauss--Seidel}

            \addplot [color=Set2-B, dashed, line width=1pt]
                table [x=cpu_time, y=rel_value, col sep=comma]
                {eit/eit_exact_lowres.csv};
            \addlegendentry{Exact}

            \nextgroupplot[
                ylabel={Relative error},
                log basis y={10},
                ytickten={0,-1,-2,-2.8},
                xmode=normal,
                xlabel={CPU time (s)},
            ]

            \addplot [color=Set2-A, line width=1pt]
                table [x=cpu_time, y=rel_error, col sep=comma]
                {eit/eit_gs_lowres.csv};
            \addlegendentry{Gauss--Seidel}

            \addplot [color=Set2-B, dashed, line width=1pt]
                table [x=cpu_time, y=rel_error, col sep=comma]
                {eit/eit_exact_lowres.csv};
            \addlegendentry{Exact}

            \nextgroupplot[
                title={Fine grid},
                ylabel={Relative value},
                ignore legend,
                log basis y={10},
                ytickten={0,-1,-1.4},
            ]

            \addplot [color=Set2-A, line width=1pt]
                table [x=cpu_time, y=rel_value, col sep=comma]
                {eit/eit_gs_highres.csv};
            \addlegendentry{Gauss--Seidel}

            \addplot [color=Set2-B, dashed, line width=1pt]
                table [x=cpu_time, y=rel_value, col sep=comma]
                {eit/eit_exact_highres.csv};
            \addlegendentry{Exact}

            \nextgroupplot[
                ylabel={Relative error},
                log basis y={10},
                ytickten={0,-1,-1.7},
                xmode=normal,
                xlabel={CPU time (s)},
            ]

            \addplot [color=Set2-A, line width=1pt]
                table [x=cpu_time, y=rel_error, col sep=comma]
                {eit/eit_gs_highres.csv};
            \addlegendentry{Gauss--Seidel}

            \addplot [color=Set2-B, dashed, line width=1pt]
                table [x=cpu_time, y=rel_error, col sep=comma]
                {eit/eit_exact_highres.csv};
            \addlegendentry{Exact}
        \end{groupplot}
    \end{tikzpicture}

    \caption{EIT algorithm performance. Top row: coarse grid, bottom row: fine grid. On the left, relative value $V(\conductivity^k)/V(\conductivity^0)$, where $V(\conductivity)=\frac12\sum_{\measindex=1}^{\Nmeas} \norm{I_\measindex - \EITmeas_\measindex}_{\inv\Sigma}^2 + \alpha\TV(\conductivity)$, and on the right, the relative error $\norm{\conductivity^k-\conductivity^T}/\norm{\conductivity^T}$ for $T=100000$ simulating the exact solution. The data shows 10000 iterations of both approaches.
    }
    \label{fig:eit:performance}
\end{figure}

\ifSubfilesClassLoaded{
    \bibliographystyle{jnsao}
    \bibliography{abbrevs.bib,tracking.bib}
}{}

\end{document}

\def\uspace{H^1(\Omega)}
\def\uspaceDual{H^1(\Omega)^*}
\def\dirichletSpace{H_0^1(\Omega)}
\def\dirichletSpaceDual{H^{-1}(\Omega)}
\def\tracespace{H^{1/2}(\partial\Omega)}
\def\tracespaceDual{H^{-1/2}(\partial\Omega)}
\def\trinv{\tr^\dagger}

\ifSubfilesClassLoaded{
    \section*{Minimal surface control}
}{
    \subsection{Minimal surface control}
    \label{sec:minsurf}
}

\paragraph{Problem formulation}

We now consider a somewhat more academic example, intended to demonstrate the application of our methods to both bilevel optimisation and nonlinear PDEs, while permitting comparison with Newton-type methods.
On a bounded Lipschitz domain $\Omega \subset \R^d$, we want to solve
\begin{subequations}
\label{eq:minsurf}
\begin{gather}
    \label{eq:minsurf:outer}
    \min_{u \in \uspace,\, x \in \tracespace} J(S_u(x)) + G(x)
    \shortintertext{subject to the minimal surface problem}
    \label{eq:minsurf:inner}
    S_u(x) = \argmin_{u \in \uspace} \int_\Omega \sqrt{1+\norm{\grad u(\xi)}^2} \d \xi + \delta_{\{x\}}(\tr u),
\end{gather}
\end{subequations}
where the trace $\tr \in \linear(\uspace; \tracespace)$ and the indicator function $\delta_{\{x\}}$ model the boundary condition $\tr u=x$.

For $J$ and $G$ we make somewhat arbitrary choices.
We impose zero boundary values on a subset $\Gamma$ of the boundary, and values between $0$ and $h_{\max}>0$ in $\Gamma^c \defeq \partial\Omega \setminus\Gamma$, by taking
\begin{equation}
    \label{eq:minsurf:r}
    G(x)=\delta_{C}(x)
    \quad\text{for}\quad
    C = \{ x \in \tracespace \mid x=0 \text{ on } \Gamma,\, 0 \le x \le h\max \text{ on } \Gamma^c\}.
\end{equation}
We then seek to maximise the volume under the surface and impose an “opening” of area $a$ in the complement of $\Gamma^c$, by setting, for some regularisation parameter $\lambda>0$,
\begin{equation}
    \label{eq:minsurf:j}
    J(u) = \frac{\lambda}{2}\adaptabs{\int_{\Gamma^c} \tr u(\xi)\d\xi -a} -\int_\Omega u(\xi) \d\xi + \frac{\epsilon}{2}\norm{u}_{\uspace}^2.
\end{equation}
The final coercion term for a small $\varepsilon>0$ ensures the existence of solutions to \eqref{eq:minsurf}.\footnote{%
    The proof of existence follows the same lines as \cite[Theorem 2.2]{delosreyes2014learning}.
}
Existence or second-order growth of the outer problem is, however, not required when we derive convergence of an outer forward-backward method from \cref{thm:fb:subdiff}.

\begin{remark}
    It would be more tasteful, and avoid the regularisation parameter $\lambda$, to exactly impose the constraint $\int_{\Gamma^c} x \xi \d\xi = a$ in the nonsmooth $G$. The proximal map of $G$ within $\Gamma^c$ would then be a projection into a scaled probability simplex. This is feasible, see e.g., \cite{angerhausen2022stochastic}, and could be employed with our forward-backward method. The choice would, however, exclude a comparison to the semismooth Newton's method, as developing the Newton derivative of such a $G$ is outside the scope of this work.
\end{remark}

\paragraph{Inner problem and adjoint}

Let $f: \uspace \to \R$ and $g: \uspace \times \tracespace \to \extR$,
\[
    f(u) \defeq \int p(\grad u)(\xi) \d\xi,
    \quad
    p(z)(\xi) \defeq \sqrt{1+\norm{z(\xi)}^2},
    \quad\text{and}\quad
    g(u, x) \defeq \delta_{\{x\}}(\tr u).
\]
Then the inner problem \eqref{eq:minsurf:inner} reads as $S_u(x) \in \argmin_{u \in \uspace}~ f(u) + g(u; x)$.
For any $\theta>0$, let
\[
    P: \uspace \times \tracespace \to \uspace,
    \quad
    P(u, x) \defeq \prox_{\theta g(\freevar; x)}(u - \theta \grad_u f(u)).
\]
Then $u = P(u, x)$ characterises the solutions of the inner problem \cite[Theorem 4.2 \& Corollary 6.22]{clason2020introduction}.

We have
\begin{equation}
    \label{eq:minsurf:fu}
    f'(u) = p'(\grad u)\grad \in \uspaceDual,
\end{equation}
and
\[
    \subdiff_u g(u, x) \subset \uspaceDual,
    \quad
    \subdiff_u g(u, x) = \tracespaceDual \tr
    \quad\text{when}\quad \tr u=x.
\]
Moreover, $p'(z) \in L^2(\Omega; \R^d)^*$ and $p''(z) \in [L^2(\Omega; \R^d)^*]^{\otimes 2}$ with the superposition representations
\begin{equation}
    \label{eq:minsurf:pprime}
    p'(z)h \defeq \int_\Omega \adaptiprod{\frac{z(\xi)}{\sqrt{1+\norm{z(\xi)}^2}}}{h(\xi)} \d\xi,
    \quad
    p''(z)(h_1, h_2) \defeq
    \int_{\Omega}
    \adaptiprod{h_1(\xi)}{\frac{\Id - \frac{z(\xi) \otimes z(\xi)}{1+\norm{z(\xi)}^2 }}{\sqrt{1+\norm{z(\xi)}}}h_2(\xi)}\d\xi.
\end{equation}

The proximal map of $g$ is
$
    \prox_{\theta g(\freevar; x)}(\tilde u) = \argmin_u \frac{1}{2}\norm{u-\tilde u}_{\uspace}^2 + \theta\delta_{\{x\}}(\tr u),
$
whose solutions $u^+$ are characterised by
\begin{equation}
    \label{eq:minsurf:step-oc}
    \iprod{u^+-\tilde u}{\freevar}_{\uspace} + \lambda\tr = 0,
    \quad \lambda \in \tracespaceDual,
    \quad
    \tr u^+=x.
\end{equation}
On the other hand, the forward step $\tilde u=u - \theta \grad_u f(u)$ equivalently reads
\[
    \iprod{\tilde u - u}{\freevar}_{\uspace} + \theta\diffwrt{f}{u}(u, x) = 0.
\]
Together with \eqref{eq:minsurf:step-oc} and \eqref{eq:minsurf:fu}, this gives
\begin{equation}
    \label{eq:minsurf:step-oc:full}
    \iprod{u^+-u}{\freevar}_{\uspace} + \theta p'(\grad u)\grad + \lambda\tr = 0,
    \quad \lambda \in \tracespaceDual,
    \quad
    \tr u^+=x.
\end{equation}
We equip $\uspace$ with the trace inner product\footnote{Equivalence of the induced norm to the standard one follows by combining the Poincaré inequality \cite[Corollary 9.19]{brezis2011functional} with the existence of a bounded right inverse of the trace operator, e.g., \cite[Theorem 3.37]{mclean2000strongly}.} $\iprod{u}{v}_{\uspace} \defeq \iprod{\grad u}{\grad v}_{L^2(\Omega; \R^d)} + \iprod{\tr u}{\tr v}_{L^2(\partial\Omega)}$.
Then, in a standard fashion, restricting the test space to $\dirichletSpace$ completely determines $u^+$, and the variable $\lambda$ becomes superfluous.
We can thus rewrite \eqref{eq:minsurf:step-oc:full} as
\[
    \iprod{\grad(u^+ - u)}{\grad_0 v}_{L^2(\Omega; \R^d)}
    + \theta p'(\grad u)\grad_0 v = 0
    \quad\text{for all}\quad v \in \dirichletSpace,
    \quad
    \tr u=x.
\]
This fully determines $u^+ = P(u, x)$.
In particular, with $u=\thisu$ and $\nextu = u^+$, this determines a forward-backward update for the inner variable.
Setting $u^+=u$, we also see that the inner solutions $u=S_u(x)$ are characterised by $0=T(u, x)$ for
\[
    T: \uspace \times \tracespace \to \dirichletSpaceDual \times \tracespace,
    \quad
    T(u, x) =
    \begin{pmatrix}
        p'(\grad u)\grad_0,
        &
        \tr u - x
    \end{pmatrix}.
\]
Then $\diffwrt{T}{u}(u, x) \in \linear(\uspace; \dirichletSpaceDual \times \tracespace)$
and  $\diffwrt{T}{x}(u, x) \in \linear(\tracespace; \dirichletSpaceDual \times \tracespace)$,
\[
    \diffwrt{T}{u}(u, x) h_u = \begin{pmatrix}
        p''(\grad u)(\grad_0 \freevar, \grad h_u),
        &
        \tr h_u
    \end{pmatrix}
    \quad\text{and}\quad
    \diffwrt{T}{x}(u, x) h_x = \begin{pmatrix}
        0,
        &
        -h_x
    \end{pmatrix}.
\]
Thus, the reduced adjoint equation \eqref{eq:tracking:reduced-adjoint} gives $S_w(x)=(w_\Omega, w_{\partial\Omega}) \in \dirichletSpace \times \tracespaceDual$ as the solution of
$
    p''(\grad u)(\grad_0 w_\Omega, \grad \freevar) + w_{\partial\Omega}\tr + J'(u) = 0
$
for $u=S_u(x)$.
This is to say that for all test functions $v \in \uspace$, we have
\begin{equation}
    \label{eq:minsurf:reduced-adjoint}
    p''(\grad u)(\grad_0 w_\Omega, \grad v) + \dualprod{w_{\partial\Omega}}{\tr v}_{\tracespaceDual,\tracespace} + J'(u)v = 0.
\end{equation}
Then, by \eqref{eq:tracking:reduced-adjoint-diff-transformation},
\begin{equation}
    \label{eq:minsurf:reduced-adjoint-diff-transformation}
    F'(x)
    = [J \circ S_u]'(x)
    = J'(S_u(x))S_u'(x)
    = S_w(x) \diffwrt{T}{x}(S_u(x), x)
    = -w_{\partial\Omega}.
\end{equation}

\paragraph{Overall algorithm}
Given initial iterates $x^0 \in \tracespace$ and $u^0 \in \uspace$, and writing  $F=J \circ S_u$, our overall inexact forward-backward algorithm,
$
    \nextx = \prox_{\tau G}(\thisx - \tau \nextestgrad),
$
decomposes into:
\begin{enumerate}
    \item \textbf{Inner forward-backward step:} For an inner step length parameter $\theta \in (0, \tau L)$, where $L$ is a Lipschitz factor of $f'$, solve $\nextu \in \uspace$ from the linear system
    \[
        \iprod{\grad(\nextu - \thisu)}{\grad_0 v}_{L^2(\Omega; \R^d)}
        + \theta p'(\grad \thisu)\grad_0 v = 0
        \quad\text{for all}\quad v \in \dirichletSpace,
        \quad
        \tr \nextu=\thisx.
    \]
    After discretisation, this step is efficient to do exactly, as the stiffness matrix can be pre-factorised.
    \item \textbf{Adjoint step:} find $(\nexxt w_\Omega, \nexxt w_{\partial\Omega}) \in \dirichletSpace \times \tracespaceDual$ by (a) solving or (b) taking a single step of Gauss–Seidel splitting on a discretisation of the linear system
    \[
        p''(\grad \nextu)(\grad_0 \nexxt w_\Omega, \grad v) + \dualprod{\nexxt w_{\partial\Omega}}{\tr v}_{\tracespaceDual,\tracespace} + J'(\nextu) v = 0
        \quad\text{for all}\quad v \in \uspace.
    \]
    \item \textbf{Outer forward-backward step:} Update
    \begin{equation}
        \label{eq:minsurf:outer-update}
        \nextx \defeq \argmin_{x \in \tracespace} G(x) + F(\thisx) - \dualprod{\nexxt w_{\partial\Omega}}{x-\thisx}_{\tracespaceDual,\tracespace} + \frac{1}{2}\norm{x-\thisx}_M^2,
    \end{equation}
    where $M=\inv\tau \mathscr{I}$ for the injection $\mathscr{I}: H^{1/2}(\partial\Omega) \hookrightarrow L^2(\partial\Omega)\hookrightarrow H^{-1/2}(\Omega)$, $x \mapsto \iprod{x}{\freevar}_{L^2(\Omega)}$, and an outer step length parameter $\tau>0$.
    In numerical practise, with also $\nexxt w_{\partial\Omega} \in L^2(\partial\Omega)$, this is simply the standard forward-backward update $\nextx \defeq \prox_{\tau G}(\thisx + \tau\nexxt w_{\partial\Omega})$.
\end{enumerate}

This works for any $J$ and $G$ that satisfy the assumptions of our general theory.
For the specific choices \cref{eq:minsurf:r,eq:minsurf:j}, we have $J'(u)v=\int_\Omega v(\xi) d\xi + (\int_{\Gamma^c} \tr u(\xi) d\xi -a)\int_{\Gamma^c} \tr v(\xi) d\xi$, and denoting $|\Gamma^c|=\int_{\Gamma^c} \d\xi$ and assuming $\tilde x \in L^2(\partial\Omega)$,
\[
    \prox_{\tau G}(\tilde x)(\xi) =
    \begin{cases}
        0, & \xi \in \Gamma, \\
        \min\{\max\{\tilde x(\xi), 0\}, h_{\max}\}, & \xi \in \Gamma^c.
    \end{cases}
\]

The mapping $M$ is self-adjoint and positive (semi-)definite in the sense of \cref{sec:intro}.
The update \eqref{eq:minsurf:outer-update} can be written in the implicit form \eqref{eq:fb:implicit} with $\Xi=0$, i.e.,
\begin{equation}
    \label{eq:minsurf:outer-fb-implicit}
    0 \in \subdiff G(\nextx) - w_\Omega + M(\nextx-\thisx).
\end{equation}

We next sketch how convergence of the method could be obtained. Rigorous verification of all conditions is, again, outside the scope of the present work, as that would demand a detailed sensitivity analysis of the minimal surface problem.

\begin{claim}
    For small enough $\theta, \tau>0$, the above method satisfies $\inf_{x^* \in \subdiff G(\thisx)} \norm{x^* + F'(\thisx)} \to 0$.
\end{claim}

\begin{proof}[Sketch of proof]
    To prove the inner tracking \cref{ass:tracking:main}\,\cref{item:tracking:main:inner-tracking}, the first idea is to use the exemplary \cref{thm:tracking:inner-fb}.
    However, since $f$ and $g(\freevar; x)$ are only “partially” strongly convex, and $f$ only on bounded sets, we need to combine their contributions, and ensure that we work on bounded sets of $u$.

    \textbf{Bounded and Lipschitz solutions maps:}
    The trace operator has a bounded right-inverse $\tr^\dagger$ \cite[Theorem 3.37]{mclean2000strongly}.
    With the definition of $f$ and $S_u(\thisx)$ this gives for some constants $C_1, C_2>0$ that
    \[
        \norm{\grad[S_u(\thisx)]}_{L^2(\Omega; \R^d)}
        \le f(S_u(\thisx))
        \le f(\tr^\dagger \thisx)
        \le C_1 + \norm{\grad\tr^\dagger\thisx}_{L^2(\Omega; \R^d)}
        \le C_1 + C_2\norm{\thisx}_{\tracespace}.
    \]
    This and Poincaré's inequality \cite[Corollary 9.19]{brezis2011functional} bound $\norm{S_u(\thisx)}_{\uspace}$ as a function of $\norm{\thisx}_{\tracespace}$.

    To prove that $S_u$ is Lipschitz (from $\Dom R \subset \tracespace$ to $\uspace$) for some $\pi_u$, we can use \cite[Theorem 4.51]{bonnans2000perturbation}.\footnote{It is worth noting that the critical cone condition (3.147) in \cite{bonnans2000perturbation} essentially does the $H^1 \rightsquigarrow H_0^1$ replacement. The upper Lipschitz condition on (4.116) is essentially the existence of a bounded inverse of the trace operator.}
    We can then, in principle, apply the implicit function theorem to the reduced adjoint equation \eqref{eq:minsurf:reduced-adjoint} to show that $S_w$ is $L$-Lipschitz for some $L>0$ (on the bounded set $\Dom R$).
    Then $F'$ given by \eqref{eq:minsurf:reduced-adjoint-diff-transformation} is also $L$-Lipschitz (on $\Dom R$).

    \textbf{Inner tracking:}
    Suppose $\thisu \in \ulocalset$ for some bounded $\ulocalset \subset \uspace$ that also contains $S_u(\thisx)$.
    Write $\grad^\hstar\grad \defeq \iprod{\grad \freevar}{\grad \freevar}_{L^2(\Omega; \R^d)}$ and $\tr^\hstar\tr \defeq \iprod{\tr \freevar}{\tr \freevar}_{L^2(\partial \Omega)}$.
    Both operators are in $\linear(\uspace;\uspaceDual)$.
    It follows from the expression for $p''$ in \eqref{eq:minsurf:pprime} that $f'$ is $\gamma_f\nabla^\hstar\nabla$-monotone \emph{on the bounded set} $\ulocalset$ for some $\gamma_f>0$ in the sense of \cref{sec:operator-reg:def}.
    Similarly, $f'$ can be verified to be $2\nabla^\hstar\nabla$-$*$-co-coercive through the mean value theorem and the equivalence of \cref{lemma:opeartor-reg:coco-lipschitz}.
    Moreover, $g(\freevar; x)$ is $\gamma_g\tr^\hstar\tr$-subdifferentiable for \emph{any} $\gamma_g>0$.
    For simplicity take $\gamma_g=\gamma_f/(2(1+\gamma_f\theta))$, where $\theta \in (0, \inv \ell)$ is the inner step length parameter.
    Let the injection $\Inj = \grad^\hstar\grad u + \tr^\hstar\tr: \uspace \hookrightarrow \uspaceDual$.
    Let $\theta>0$ be small enough that $A \defeq \inv\theta\Inj - \gamma_f\grad^\hstar\grad \ge \epsilon\Inj$ for some $\epsilon>0$.
    We then have
    \[
        \inv\theta\Inj + 2\gamma_g\tr^\hstar\tr
        =
        A
        + 2\gamma_g(\tr^\hstar\tr + \grad^\hstar\grad)
        - (\gamma_f-2\gamma_g)\grad^\hstar\grad
        =
        (1+\gamma_f)A.
    \]
    Abbreviating $\this{\bar u} = S_u(\thisx)$, we exploit the operator-relative \cref{lemma:smoothness:monotonicity} with $\zeta=1/2$ to obtain
    \[
        \dualprod{f'(\thisu) + \subdiff g(\nextu; \thisx)}{\nextu-\this{\bar u}}_{\uspaceDual, \uspace}
        -\frac{\gamma_f}{2}\norm{\thisu-\this{\bar u}}_{\nabla^\hstar\nabla}^2
        \ge
        \gamma_g\norm{\nextu-\this{\bar u}}_{\tr^\hstar\tr}^2
        - \frac{\ell}{2}\norm{\nextu-\thisu}_{\nabla^\hstar\nabla}^2.
    \]

    Combining with $f'(\thisu) + \subdiff g(\nextu; \thisx)=-\inv\theta\Inj(\nextu-\thisu)$ and the Pythagoras' identity \eqref{eq:norms:pythagoras} gives
    \begin{equation}
        \label{eq:minsurf:contractivity}
        \frac{1}{2}\norm{\thisu-\this{\bar u}}_{A}^2
        \ge
        \frac{1}{2}\norm{\nextu-\this{\bar u}}_{\inv\theta\Inj + 2\gamma_g\tr^*\tr}^2
        \ge
        \frac{1+\gamma_f}{2}\norm{\nextu-\this{\bar u}}_{A}^2.
    \end{equation}
    Now using the Lipschitz continuity of $S_u$, and $\norm{\freevar}_{A}$ being equivalent to the standard norm in $H^1(\Omega)$, we obtain \cref{ass:tracking:main}\,\cref{item:tracking:main:inner-tracking}.

    \textbf{Bounded inner iterates (challenge):}
    We did not need $\nextu \in \ulocalset$ to use \cref{lemma:smoothness:monotonicity}, only $\thisu,S_u(\thisx) \in \ulocalset$. To pass to the subsequent step, we now need to ensure that.
    Obtaining such an \emph{a posteriori} bound is, however, challenging (nevertheless, see  \cref{rem:eit:bounds}).
    It is easier to obtain an \emph{a priori} bound with controlled escape:
    Indeed, \eqref{eq:minsurf:contractivity} and $S_u(\thisx) \in \ulocalset$ show that $\nextu$ belongs to some enlarged bounded set $\ulocalset'$.
    Performing multiple inner iterations $j$, \emph{if necessary}, $\nextu=u^{k+1,j}$ can, consequently, be returned to within a set distance of $S_u(\thisx)$, hence to some bounded set $\ulocalset$. In practise, we have observed no need for this.

    \textbf{Adjoint tracking and differential transformation:}
    \Cref{ass:tracking:main}\,\cref{item:tracking:main:adjoint-tracking,item:tracking:main:differential-transformation} follow from \cref{thm:tracking:reduced-adjoint-splitting} (and \cref{ex:splitting:Gauss–Seidel} for the Gauss–Seidel adjoint steps, after passing to a finite element subspace).
    The required Lipschitz continuities and bounds of $\diffwrt{T}{u}$ and $\diffwrt{T}{x}$ are easily verified; unlike in the case of EIT, $\norm{\diffwrt{T}{z}(u, z)}_{M,\dirichletSpaceDual \times \tracespaceDual}$ becoming an $L^2$ operator norm, causes no difficulty.

    \textbf{Outer step:}
    Switching \eqref{eq:minsurf:outer-fb-implicit} to its Riesz representation (or we could again use \cref{thm:fb:growth:tracking}), we now verify \cref{ass:fb:descent,ass:fb:approx-cont} for the outer step through \cref{cor:fb:tracking}\,\cref{item:fb:tracking:weak,item:fb:tracking:approx-cont}.
    This requires small enough $\tau>0$.
    Convergence then follows from \cref{thm:fb:subdiff}.
\end{proof}

\paragraph{Semismooth Newton's method (SSN)}

Writing $\mathscr{R}w_{\partial\Omega}$ for the Riesz representation of $w_{\partial\Omega}$, we consider the optimality conditions derived above,
\begin{gather*}
    0 = H(x, u, w_\Omega, w_{\partial\Omega})
    \defeq
    \begin{pmatrix}
    x - \prox_{\tau G}(x + \tau\mathscr{R} w_{\partial\Omega})
    \\
    T(u, x),
    \\
    p''(\grad u)(\grad_0 w_\Omega, \grad \freevar)  + w_{\partial\Omega}\tr  + J'(u)
    \end{pmatrix},
    \\
    H: \tracespace \times \uspace \times \dirichletSpace \times \tracespaceDual:
    \to \tracespace \times (\dirichletSpaceDual \times \tracespace) \times \uspaceDual.
\end{gather*}
We Newton-differentiate
\[
     D_N H(x, u, w_\Omega, w_{\partial\Omega})
     =
     \begin{pmatrix}
      \Id - A & 0 & 0 &  -\tau A
     \\
     \diffwrt{T}{x}(u, x) & \diffwrt{T}{u}(u, x) & 0 & 0
     \\
     0
     &
     p'''(\grad u)(\grad_0 w_\Omega, \grad \freevar, \grad \freevar)
     + J''(u)
     &
     p''(\grad u)(\grad_0 \freevar, \grad \freevar)
     &
     \tr^*
     \end{pmatrix},
\]
where $A \defeq D_N \prox_{\tau G}(x + \tau \mathscr{R} w_{\partial\Omega})$ is the Newton derivative of the proximal map; see, e.g., \cite[Chapter 14]{clason2020introduction}.
Due to the symmetricity of $p'''(\grad u)$, the order of application of the parameters does not matter while $p''(\grad u)$ is first applied in the $\grad_0$ component.
Write $\this\eta \defeq (\thisx, \thisu, \this w_\Omega, \this w_{\partial\Omega})$ Now the SSN computes on each iteration a $\this s$ such that $H'(\this\eta) \this s = -H(\this\eta)$, and updates $\nexxt\eta \defeq \this\eta + \this s$.

In practise, $H'(\this v)$ is poorly conditioned (or even not invertible).
We therefore use the damped variant $[H'(\this\eta) + \vartheta \Id]\this s = -H(\this\eta)$ for a damping parameter $\vartheta>0$. We can then expect only linear convergence \cite[Theorem 14.2]{clason2020introduction}.
We also tried to only dampen a reduced system in conjunction with an active-set strategy, but had no success with it.

\paragraph{Numerical results}

\begin{figure}[t!]
    \centering
    \begin{subfigure}{0.45\textwidth}\centering
        \includegraphics[width=0.7\linewidth]{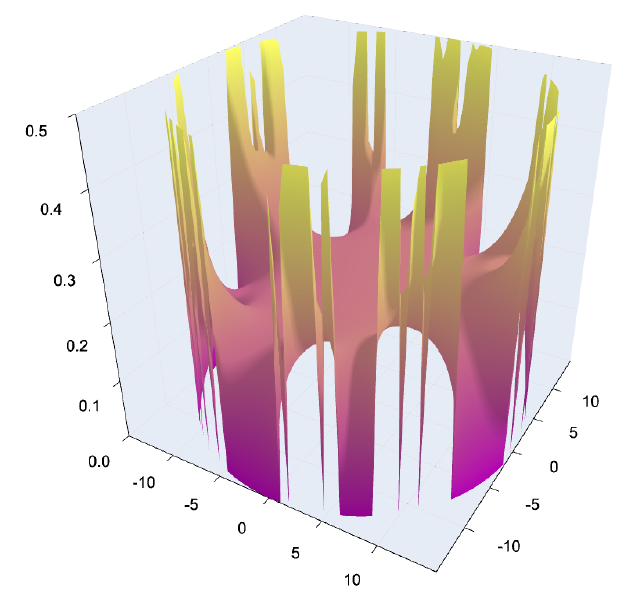}
        \caption{Forward-backward + Gauss–Seidel}
    \end{subfigure}
    \begin{subfigure}{0.45\textwidth}\centering
        \includegraphics[width=0.7\linewidth]{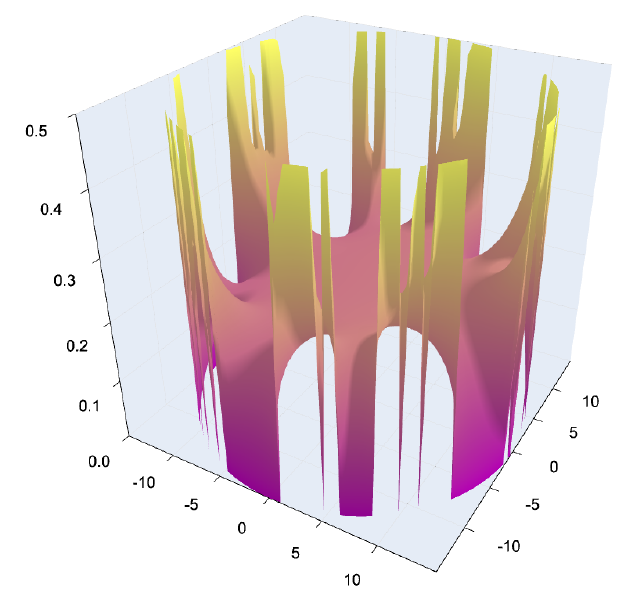}
        \caption{Semismooth Newton}
    \end{subfigure}
    \caption{Numerical solutions for the minimal surface control demonstration.}
    \label{fig:minsurf:solution}
\end{figure}

\pgfplotsset{
    log shift y/.style={
        y coord trafo/.code={
            \pgfmathparse{ln(##1-#1)/ln(10)}
        },
        y coord inv trafo/.code={
            \pgfmathparse{10^(##1)+#1}
        },
    }
}

\begin{figure}[t!]
    \centering
    \begin{tikzpicture}
        \begin{groupplot}[
            group style={
                group size=2 by 3,
                horizontal sep = 10ex,
                vertical sep = 13ex,
            },
            width=0.5\linewidth,
            height=0.3\linewidth,
            scaled x ticks=false,
            xminorticks=true,
            yminorticks=true,
            axis x line=bottom,
            axis y line=left,
            xmode=log,
            grid=major,
            label style={
                font=\small,
            },
            title style={
                at = {(0.0, 1.0)},
                anchor = south west
            },
            legend style={
                at = {(1.0, 1.0)},
                yshift=3pt,
                anchor = south east,
                inner sep=1pt,
                outer sep=1pt,
                draw=none,
                fill=none,
                font=\normalfont,
                legend columns=-1,
            },
            xlabel={CPU time (s) (logarithmic)},
        ]
            \nextgroupplot[
                title={Coarse grid},
                ylabel={Relative value$^*$},
                ignore legend,
                log shift y = 0.0156,
                ytick = {0.0157, 0.02, 0.1},
                yticklabels = {$0.0157$, $0.02$, $0.1$},
            ]
            \addplot [color=Set2-A, line width=1pt]
                table [x=cpu_time, y=rel_value, col sep=comma]
                {minsurf/minsurf_fbfb_gs_lowres_eps.csv};
            \addlegendentry{FB + Gauss--Seidel}

            \addplot [color=Set2-B, dashed, line width=1pt]
                table [x=cpu_time, y=rel_value, col sep=comma]
                {minsurf/minsurf_fbfb_exact_lowres_eps.csv};
            \addlegendentry{FB + Exact}

            \addplot [color=Set2-C, dashdotted, line width=1pt]
                table [x=cpu_time, y=rel_value, col sep=comma]
                {minsurf/minsurf_newton_lowres_eps.csv};
            \addlegendentry{damped SSN}

            \nextgroupplot[
                ylabel={Relative discrepancy},
                ytickten={-2,-5,-8,-11,-14,-17},
                ymode=log
            ]

            \addplot [color=Set2-A, line width=1pt]
                table [x=cpu_time, y=rel_discrepancy, col sep=comma]
                {minsurf/minsurf_fbfb_gs_lowres_eps.csv};
            \addlegendentry{FB + Gauss--Seidel}

            \addplot [color=Set2-B, dashed, line width=1pt]
                table [x=cpu_time, y=rel_discrepancy, col sep=comma]
                {minsurf/minsurf_fbfb_exact_lowres_eps.csv};
            \addlegendentry{FB + Exact}

            \addplot [color=Set2-C, dashdotted, line width=1pt]
                table [x=cpu_time, y=rel_discrepancy, col sep=comma]
                {minsurf/minsurf_newton_lowres_eps.csv};
            \addlegendentry{damped SSN}

            \nextgroupplot[
                title={Fine grid},
                ylabel={Relative value$^*$},
                ignore legend,
                log shift y = 0.0170,
                ytick = {0.0171, 0.02, 0.1},
                yticklabels = {$0.0171$, $0.02$, $0.1$},
            ]

            \addplot [color=Set2-A, line width=1pt]
                table [x=cpu_time, y=rel_value, col sep=comma]
                {minsurf/minsurf_fbfb_gs_highres_eps.csv};
            \addlegendentry{FB + Gauss--Seidel}

            \addplot [color=Set2-B, dashed, line width=1pt]
                table [x=cpu_time, y=rel_value, col sep=comma]
                {minsurf/minsurf_fbfb_exact_highres_eps.csv};
            \addlegendentry{FB + Exact}

            \addplot [color=Set2-C, dashdotted, line width=1pt]
                table [x=cpu_time, y=rel_value, col sep=comma]
                {minsurf/minsurf_newton_highres_eps.csv};
            \addlegendentry{damped SSN}

            \nextgroupplot[
                ylabel={Relative discrepancy},
                ytickten={-2,-5,-8,-11,-14,-17},
                ymode=log,
            ]

            \addplot [color=Set2-A, line width=1pt]
                table [x=cpu_time, y=rel_discrepancy, col sep=comma]
                {minsurf/minsurf_fbfb_gs_highres_eps.csv};
            \addlegendentry{FB + Gauss--Seidel}

            \addplot [color=Set2-B, dashed, line width=1pt]
                table [x=cpu_time, y=rel_discrepancy, col sep=comma]
                {minsurf/minsurf_fbfb_exact_highres_eps.csv};
            \addlegendentry{FB + Exact}

            \addplot [color=Set2-C, dashdotted, line width=1pt]
                table [x=cpu_time, y=rel_discrepancy, col sep=comma]
                {minsurf/minsurf_newton_highres_eps.csv};
            \addlegendentry{damped SSN}

            \nextgroupplot[
                title={Extra-fine grid},
                ylabel={Relative value$^*$},
                ignore legend,
                log shift y = 0.0163,
                ytick = {0.0164, 0.02, 0.1},
                yticklabels = {$0.0164$, $0.02$, $0.1$},
            ]

            \addplot [color=Set2-A, line width=1pt]
                table [x=cpu_time, y=rel_value, col sep=comma]
                {minsurf/minsurf_fbfb_gs_extrares_eps.csv};
            \addlegendentry{FB + Gauss--Seidel}

            \addplot [color=Set2-B, dashed, line width=1pt]
                table [x=cpu_time, y=rel_value, col sep=comma]
                {minsurf/minsurf_fbfb_exact_extrares_eps.csv};
            \addlegendentry{FB + Exact}

            \addplot [color=Set2-C, dashdotted, line width=1pt]
                table [x=cpu_time, y=rel_value, col sep=comma]
                {minsurf/minsurf_newton_extrares_eps.csv};
            \addlegendentry{damped SSN}

            \nextgroupplot[
                ylabel={Relative discrepancy},
                ytickten={-2,-5,-8,-11,-14,-17},
                ymode=log
            ]

            \addplot [color=Set2-A, line width=1pt]
                table [x=cpu_time, y=rel_discrepancy, col sep=comma]
                {minsurf/minsurf_fbfb_gs_extrares_eps.csv};
            \addlegendentry{FB + Gauss--Seidel}

            \addplot [color=Set2-B, dashed, line width=1pt]
                table [x=cpu_time, y=rel_discrepancy, col sep=comma]
                {minsurf/minsurf_fbfb_exact_extrares_eps.csv};
            \addlegendentry{FB + Exact}

            \addplot [color=Set2-C, dashdotted, line width=1pt]
                table [x=cpu_time, y=rel_discrepancy, col sep=comma]
                {minsurf/minsurf_newton_extrares_eps.csv};
            \addlegendentry{damped SSN}

        \end{groupplot}
    \end{tikzpicture}
    \caption{Minimal surface control algorithm performance. Top row: coarse grid, middle row: fine grid, bottom row: extra-fine grid. On the left, relative value $V(x^k)/V(x^0)$, for ($^*$) the positivity-shifted objective function $V(x)=[F+G](x) + h_{\max} \int_\Omega d \xi$.
    The relative value is, moreover, displayed on a shifted logaritmic scale $t \mapsto \log_{10}(t - t_0)$, for a low value $t_0$ on which to focus.
    On the right, the relative discrepancy $d_k/d_0$ for $d_k \defeq \norm{\thisx - \prox_{\tau G}(\thisx + \tau \grad F(\thisx))}$.
    The data shows 20000 iteration of the forward-backward approaches, and 2000 iterations of SSN.}
    \label{fig:minsurf:performance}
\end{figure}

We perform the experiments on the same two grids of $\Omega=B(0, 15)$ as in the EIT demonstration of \cref{sec:eit}, as well as an “extra-fine” grid of 17281 nodes.
We arbitrarily take $\Gamma = \{(x, y) \in \partial\Omega \mid \cos(5\phi) + 1.2\sin(9\phi) < 0 \}$, where $\phi=\arctan(y, x)$. We take $a=15$, $h_{\max}=0.5$, and $\epsilon=0.1$.
The resulting surface is shown in \cref{fig:minsurf:solution}.
The initial iterate is $x^0=\prox_R(0.1\chi_\Omega)$, $u^0 \equiv 0$, and $w^0 \equiv 0$; the projection is performed for the performance graphs in \cref{fig:minsurf:performance} to display meaningful relative values. Moreover, the function values in the graphs are shifted to be positive, for the logarithmic scale to be meaningful.

In the “FB + Gauss–Seidel” variant of our algorithm, on each outer iteration, we only take a single Gauss–Seidel step for the adjoint equation.
In the “FB + Exact” variant, we solve the adjoint equation exactly (up to numerical precision).
For all algorithms, the outer step length parameter $\tau=0.0001$, while the inner step length is $\theta=0.9/L$, where $L$ is an estimate of the Lipschitz factor of $f'(\freevar, x)$.
For the outer step length, the SSN is not dependent on a condition such as $\tau L \le 1$, but we were unable to significantly increase the value without destabilising the algorithm.
To stabilise the SSN, we, moreover, had to manually fine-tune the dampening parameter to $\vartheta=0.05$ for all grids.
Further details can be found in our numerical implementation \cite{tuomov-tracking-codes}.

The (damped) semismooth Newton's method is surprisingly slow. Not only does each step require a long time to factorise the system matrix, it also requires an unusually high number of iterations: \cref{fig:minsurf:performance} shows 2000 iterations of the SSN.
Nevertheless, on the fine grid (but not the extra-fine or coarse grid), it eventually reaches much lower values for the discrepancy, than the forward-backward variants within their maximum iteration count of 20000.
Unlike in the EIT experiments, we did not include graphs of the relative distance to a high quality solution, because the SSN did not converge to that solution. It seemed to get stuck in a different critical point of a lower function value.

In summary, the variant of our method that uses Gauss–Seidel splitting for the adjoint equation, significantly outperforms the alternatives.
Readers skeptical of these observations are invited to verify them by experimenting with our implementation \cite{tuomov-tracking-codes}, and possibly improving it.
It is very important here that we are treating a problem where the parametrisation affects the system matrix: we would not expect our algorithm to outperform the SSN (in applicable cases) if the matrix could be prefactorised.

\ifSubfilesClassLoaded{
    \bibliographystyle{jnsao}
    \bibliography{abbrevs.bib,tracking.bib}
}{}
\end{document}

\appendix

\section{Scalar tracking results}
\label{sec:scalar-tracking}

We prove a a simple scalar tracking result, which will be used to establish the results of \cref{sec:tracking}.
The following is a scalar version of \cref{ass:tracking:main}.

\begin{assumption}
    \label{ass:scalar-tracking:main}
    For a given $k \ge 0$ and scalars $\distU{0}, \ldots, \distU{k+1}, \thisDistW[0], \ldots,  \thisDistW[k+1], \distX{1}, \ldots, \distX{k} \ge 0$, and $\scalarTrackingError{0}, \ldots, \scalarTrackingError{k} \in \R$, there exist $\pi_u, \pi_w, \primaldifffact, \alpha_w,\alpha_u>0$, such that
    \begin{align}
        \label{item:scalar-tracking:main:inner-tracking}
        \tag{i}
        \kappa_u \distU{j+1}
        &
        \le
        \distU{j}
        + \pi_u\distX{j},
        &&
        \quad\text{for all}\quad j=1,\ldots,k,
        \\
        \label{item:scalar-tracking:main:adjoint-tracking}
        \tag{ii}
        \kappa_w \distW{j+1}
        &
        \le
        \thisDistW[j]
        + {\primaldifffact} \nextDistU[j]
        + \pi_w\distX{j},
        &&
        \quad\text{for all}\quad j=1,\ldots,k,
        \quad\text{and}
        \\
        \label{item:scalar-tracking:main:differential-transformation}
        \tag{iii}
        \scalarTrackingError{j}
        &
        \le
        \alpha_u \distU{j+1}
        + \alpha_w \distW{j+1}
        &&
        \quad\text{for all}\quad j=0,\ldots,k.
    \end{align}
\end{assumption}

We wish to develop simple bounds for $\scalarTrackingErrorThis$ that can readily be used in convergence proofs when \cref{ass:scalar-tracking:main} is instantiated as \cref{ass:tracking:main}.
These core estimates then allow us to isolate the contributions of initialisation and update errors, and thereby quantify the impact of inexact inner and adjoint solutions over multiple iterations on the differential approximations.
We start with a result that unrolls the recursion in \cref{ass:scalar-tracking:main}.

\begin{lemma}
    \label{lemma:scalar-tracking:generic-recursion}
    Let \cref{ass:scalar-tracking:main}\,\cref{item:scalar-tracking:main:inner-tracking,item:scalar-tracking:main:adjoint-tracking} hold for a $k \ge 0$.
    Then, letting $\iota_k \defeq \sum_{m=1}^{k}  \kappa_u^{-m}\kappa_w^{-(k+1-m)}$ (understanding that $\iota_0=0$), we have
    \begin{equation}
        \label{eq:scalar-tracking:generic-recursion:claim}
        \begin{aligned}[t]
        \nexxt R(\alpha_u, \alpha_w)
        \defeq
        \alpha_u \aaa{k+1} + \alpha_w \bbb{k+1}
        &
        \le
        (\alpha_u\kappa_u^{-k} + \alpha_w \iota_k {\primaldifffact}) \aaa{1}
        +
        \alpha_w\kappa_w^{-k} \bbb{1}
        \\
        \MoveEqLeft[-1]
        + \sum_{j=0}^{k-1} \bigl(\alpha_u\kappa_u^{-(k-j)}\pi_u
        + \alpha_w[\iota_{k-j}{\primaldifffact}\pi_u + \kappa_w^{-(k-j)}\pi_w] \bigr) \ccc{j+1}.
        \end{aligned}
    \end{equation}
\end{lemma}

\begin{proof}
    For $k=0$, the right hand side of the inequality in  \eqref{eq:scalar-tracking:generic-recursion:claim} is equal to the left hand side.
    For $k=1$,
    we have $\iota_1 = \inv\kappa_u\inv\kappa_w$
    and, by assumption,
    $
        \aaa{2}
        \le
        \inv\kappa_u  \aaa{1} + \inv\kappa_u \pi_u \ccc{1}
        \text{ and }
        \bbb{2}
        \le
        \inv\kappa_w \bbb{1} + \inv\kappa_w {\primaldifffact} \aaa{2} + \inv\kappa_w \pi_w \ccc{1}.
    $
    Multiplying the former by $\alpha_u+\alpha_w\inv\kappa_w{\primaldifffact}$ and the latter by $\alpha_w$, then summing up, observing to cancel the two instances of $\alpha_w\inv\kappa_w{\primaldifffact}\aaa{2}$, establishes \eqref{eq:scalar-tracking:generic-recursion:claim}.

    We then take $k=n+1$, and proceed by induction, assuming \eqref{eq:scalar-tracking:generic-recursion:claim} to hold for $k = n$.
    Again,
    $
        \aaa{n+2}
        \le
        \inv\kappa_u  \aaa{n+1} + \inv\kappa_u \pi_u \ccc{n+1}
        \text{ and }
        \bbb{n+2}
        \le
        \inv\kappa_w \bbb{n+1} + \inv\kappa_w {\primaldifffact} \aaa{n+2} + \inv\kappa_w \pi_w \ccc{n+1}
    $
    by assumption.
    As in the case $k=1$, multiplying the former by $\alpha_u+\alpha_w\inv\kappa_w{\primaldifffact}$ and the latter by $\alpha_w$, and then summing up, yields
    \[
        \begin{aligned}[t]
            R^{n+2}(\alpha_u, \alpha_w)
            =
            \alpha_u \aaa{n+2} + \alpha_w \bbb{n+2}
            &
            \le
            (\alpha_u\inv\kappa_u + \alpha_w\inv\kappa_w\inv\kappa_u{\primaldifffact}) \aaa{n+1}
            +
            \alpha_w\inv\kappa_w \bbb{n+1}
            \\
            \MoveEqLeft[-1]
            +
            (\alpha_u\inv\kappa_u\pi_u + \alpha_w[\inv\kappa_w\inv\kappa_u\pi_u  {\primaldifffact} + \inv\kappa_w\pi_w]) \ccc{n+1}.
        \end{aligned}
    \]
    The first two terms on the right-hand side equal $R^{n+1}(\alpha_u\inv\kappa_u + \alpha_w\inv\kappa_w\inv\kappa_u{\primaldifffact}, \alpha_w\inv\kappa_w)$, so using \eqref{eq:scalar-tracking:generic-recursion:claim} for $k=n$, we continue
    \[
        \begin{aligned}[t]
            R^{n+2}(\alpha_u, \alpha_w)
            &
            \le
            ((\alpha_u\inv\kappa_u + \alpha_w\inv\kappa_w\inv\kappa_u{\primaldifffact})\kappa_u^{-n} + \alpha_w\inv\kappa_w\iota_n {\primaldifffact}) \aaa{1}
            +
            \alpha_w\inv\kappa_w\kappa_w^{-n} \bbb{1}
            \\
            \MoveEqLeft[-1]
            +
            \sum_{j=0}^{n-1} \bigl(
                (\alpha_u\inv\kappa_u + \alpha_w\inv\kappa_w\inv\kappa_u{\primaldifffact})\kappa_u^{-(n-j)}\pi_u
                + \alpha_w\inv\kappa_w[\iota_{n-j}{\primaldifffact}\pi_u + \kappa_w^{-(n-j)}\pi_w]
            \bigr) \ccc{j+1}
            \\
            \MoveEqLeft[-1]
            +
            (\alpha_u\inv\kappa_u\pi_u + \alpha_w[\inv\kappa_w\inv\kappa_u\pi_u  {\primaldifffact} + \inv\kappa_w\pi_w]) \ccc{n+1}\\
            &
            =
            (\alpha_u\kappa_u^{-(n+1)} + \alpha_w \primaldifffact(\inv\kappa_w\kappa_u^{-(n+1)} + \inv\kappa_w \iota_n)) \aaa{1}
            +
            \alpha_w\kappa_w^{-(n+1)} \bbb{1}
            \\
            \MoveEqLeft[-1]
            +
            \sum_{j=0}^n \bigl(
                \alpha_u\kappa_u^{-(n+1-j)}\pi_u  + \alpha_w[ (\inv\kappa_w\kappa_u^{-(n+1-j)}
                + \inv\kappa_w\iota_{n-j}){\primaldifffact}\pi_u + \kappa_w^{-(n+1-j)}\pi_w]
            \bigr) \ccc{j+1}.
        \end{aligned}
    \]
    Here $\inv\kappa_w\kappa_u^{-(n+1-j)}+ \inv\kappa_w\iota_{n-j} = \iota_{n+1 - j}$, as by the definition of $\iota_{n+1}$, for any $n \ge 0$,
    \begin{equation}
        \label{eq:scalar-tracking:iota-recursion}
        \iota_{n+1}
        =
        \sum_{m=1}^{n+1}  \kappa_u^{-m}\kappa_w^{-(n+2-m)}
        =
        \inv\kappa_w\kappa_u^{-(n+1)}  + \sum_{m=1}^{n}  \kappa_u^{-m}\kappa_w^{-(n+2-m)}
        =
        \inv\kappa_w\kappa_u^{-(n+1)} + \inv\kappa_w \iota_n,
    \end{equation}
    Thus we obtain \eqref{eq:scalar-tracking:generic-recursion:claim} for $k=n+1$.
\end{proof}

The next three lemmas form our core estimates.
To simplify the estimates, recalling that $\kappa_u, \kappa_w>1$, we observe that
\begin{equation}
    \label{eq:scalar-tracking:iota-est}
    p^k \iota_k
    \le
    \inv p k (\kappa/p)^{-(k+1)}
    \quad\text{for}\quad
    \kappa \defeq \min (\kappa_u, \kappa_w) > 1
    \text{ and any } p \in (0, \kappa).
\end{equation}
Thus, by sum formulae for arithmetic-geometric progressions \cite[formula 0.113]{gradshteyn2014table},
\begin{equation}
    \label{eq:scalar-tracking:iota-est-sum}
    \sum_{k=0}^{n-1} p^k \iota_k
    \le
    \sum_{k=0}^{\infty} p^k \iota_k
    \le
    p^{-1}(\kappa/p-1)^{-2}
    =p(\kappa-p)^{-2}
    \quad \text{for all } n \in \N.
\end{equation}

The next lemma bounds the distance between the true differential and estimate at iteration $k$ through an error term. In the two lemmas that follow, we further estimate the error term.
We use the first lemma directly in \cref{thm:weaker:erroneous-lipschitz}.

\begin{lemma}
    \label{lemma:scalar-tracking:inner-product-error-estimate}
    Suppose \cref{ass:scalar-tracking:main} holds for a $k \ge 0$.
    Then for any $p \in (0, \kappa)$, we have
    \begin{equation}
        \label{eq:scalar-tracking:inner-product-error-estimate}
        \scalarTrackingErrorThisSq
        \le
        (\alpha_u \nextDistU + \alpha_w \nextDistW)^2
        \le
        \breve e_{p,k}.
    \end{equation}
    where, for
    $
        \trackingres_j
        \defeq \alpha_u\kappa_u^{-j}\pi_u + \alpha_w[\iota_j{\primaldifffact}\pi_u + \kappa_w^{-j}\pi_w]
    $
    and
    $\overline\kappa \defeq \max\{\kappa_u, \kappa_w\}$, we set
    {\allowdisplaybreaks\begin{align}
        \label{eq:scalar-tracking:ressum}
        \trackingressum
        &
        \defeq
        \frac{\overline\kappa}{p}
        \sum_{j=0}^\infty p^j \trackingres_j
        \le
        \frac{(\alpha_u\pi_u+\alpha_w\pi_w)\kappa\overline\kappa}{p(\kappa-p)}
        + \frac{\alpha_w {\primaldifffact}\pi_u\overline\kappa}{p^2(\kappa-p)^2}
        \quad\text{and}
        \\
        \label{eq:scalar-tracking:ek:0}
        \breve e_{p,k}
        &
        \defeq
            \frac{\trackingressum(\alpha_u\kappa_u^{-k} + \alpha_w \iota_k {\primaldifffact})}{\pi_u p^k}\initDistUsq
        +
        \frac{\trackingressum\alpha_w\kappa_w^{-k}}{\pi_w p^k}\initDistWsq
        +
        \sum_{j=0}^{k-1} \frac{\trackingressum\trackingres_{k-j}}{ p^{k-j}} \nextDistXthisSq[j].
    \end{align}}
    The second inequality of \eqref{eq:scalar-tracking:inner-product-error-estimate} holds even if \cref{ass:scalar-tracking:main}\,\cref{item:scalar-tracking:main:differential-transformation} does not.
\end{lemma}

\begin{proof}
    Since $\{x^n\}_{n=0}^k \subset \localset$, invoking the inner and adjoint tracking \cref{ass:scalar-tracking:main}\,\cref{item:scalar-tracking:main:inner-tracking,item:scalar-tracking:main:adjoint-tracking} and \cref{lemma:scalar-tracking:generic-recursion},
    we obtain
    \[
        \nexxt R
        \defeq
        \alpha_u \nextDistU + \alpha_w \nextDistW
        \le
        (\alpha_u\kappa_u^{-k} + \alpha_w \iota_k {\primaldifffact})\initDistU
        +
        \alpha_w\kappa_w^{-k}\initDistW
        + \sum_{j=0}^{k-1} \trackingres_{k-j} \nextDistXthis[j].
    \]
    Using  Young's inequality several times here, we deduce for any $\theta_k^u,\theta_k^w,\theta_{k,j},s>0$ that
    \begin{equation}
        \label{eq:scalar-tracking:sr-est}
        \begin{split}
        4s \nexxt R
        &
        \le
            \frac{(\alpha_u\kappa_u^{-k} + \alpha_w \iota_k {\primaldifffact})^2}{\theta_k^u}\initDistUsq
        +
        \frac{(\alpha_w\kappa_w^{-k})^2}{\theta_k^w}\initDistWsq
        \\
        \MoveEqLeft[-1]
        +
        \sum_{j=0}^{k-1} \frac{\trackingres_{k-j}^2}{\theta_{k,j}} \nextDistXthisSq[j]
        +
        4\left(\theta_k^u + \theta_k^w + \sum_{j=0}^{k-1} \theta_{k,j}\right)s^2.
        \end{split}
    \end{equation}
    Take $\theta_k^u = p^k \trackingressum^{-1}\pi_u(\alpha_u\kappa_u^{-k} + \alpha_w \iota_k {\primaldifffact})$,
    $\theta_k^w = p^k \trackingressum^{-1}\pi_w\alpha_w\kappa_w^{-k}$,
    and $\theta_{k,j} = \trackingressum^{-1}p^{k-j}\trackingres_{k-j}$.
    Observe from \eqref{eq:scalar-tracking:iota-recursion} that $\iota_k \le \kappa_w \iota_{k+1}$.
    Hence $p^k \iota_k \le (\kappa_w/p) p^{k+1} \iota_{k+1}$, and further, $p^k\trackingres_k \le (\overline\kappa/p)p^{k+1}\trackingres_{k+1}$, where $\overline\kappa/p > 1$. Now
    \[
        \theta_k^u + \theta_k^w + \sum_{j=0}^{k-1} \theta_{k,j}
        = \frac{1}{\trackingressum}\left(p^k \trackingres_k + \sum_{j=1}^k p^j \trackingres_j\right)
        \le \frac{\overline\kappa}{\trackingressum p}\sum_{j=0}^{k+1} p^j \trackingres_j
        \le 1.
    \]
    Inserting this estimate and the choices of $\theta_k^w$, $\theta_k^u $, and $\theta_{k,j}$ into \eqref{eq:scalar-tracking:sr-est}, establishes
    $
        4s \nexxt R
        \le
        \breve e_{p,k} + 4s^2.
    $
    Taking $s=\nexxt R/2$ (which maximises $4s \nexxt R - 4s^2$) yields the second inequality of \cref{eq:scalar-tracking:inner-product-error-estimate}.
    The first inequality is simply \cref{ass:scalar-tracking:main}\,\cref{item:scalar-tracking:main:differential-transformation}.

    Finally, the bound in \eqref{eq:scalar-tracking:ressum} on $\trackingressum$ follows from \cref{eq:scalar-tracking:iota-est-sum} and $\sum_{j=0}^{\infty} (p/\kappa)^j = 1/(1-p/\kappa) = \kappa/(\kappa-p)$.
\end{proof}

We use the next result in \cref{thm:weaker:smoothness}.

\begin{lemma}
    \label{lemma:scalar-tracking:error-sum}
    Suppose \cref{ass:scalar-tracking:main} holds for a $k \ge 0$.
    Then for any $p \in [1, \kappa)$, we have
    \begin{equation}
        \label{eq:scalar-tracking:inner-product-error-estimate:x}
        \scalarTrackingErrorThisSq
        \le
        \trackingressum^2 \nextDistXthisSq
        +
        e_{p,k},
    \end{equation}
    where, for $\breve e_{p,k}$ defined \eqref{eq:scalar-tracking:ek:0},
    \begin{equation}
        \label{eq:scalar-tracking:ek}
        e_{p,k} \defeq \breve e_{p,k} - \trackingressum^2 \nextDistXthisSq
    \end{equation}
    satisfies
    \begin{equation}
        \label{eq:scalar-tracking:ressum:sm}
        \begin{split}
        \sum_{n=0}^k p^n e_{p,n}
        &
        \le
        \Psi_p \defeq
            \frac{\initDistUsq}{\pi_u} \bigg(\frac{\trackingressum\alpha_u\kappa}{\kappa-1} + \frac{\trackingressum\alpha_w{\primaldifffact}}{(\kappa-1)^2}\bigg)
        +
        \frac{\initDistWsq}{\pi_w} \bigg(\frac{\trackingressum\alpha_w\kappa}{\kappa-1}\bigg).
        \end{split}
    \end{equation}
\end{lemma}

\begin{proof}
    We only need to prove \eqref{eq:scalar-tracking:ressum:sm}; the rest is immediate from \cref{lemma:scalar-tracking:inner-product-error-estimate}. Let
    \[
        \begin{aligned}
        A_n & \defeq
        \frac{\trackingressum(\alpha_u\kappa_u^{-n} + \alpha_w \iota_n {\primaldifffact})}{\pi_u}\initDistUsq,
        &
        B_n & \defeq \frac{\trackingressum\alpha_w\kappa_w^{-n}}{\pi_w}\initDistWsq,
        \\
        C_n & \defeq \sum_{j=0}^{n-1} p^j\trackingressum\trackingres_{n-j} \nextDistXthisSq[j],
        \quad\text{and}
        &
        D_n & \defeq p^n \trackingressum^2 \nextDistXthisSq[n].
        \end{aligned}
    \]
    Then $p^n e_{p,n} =:  A_n + B_n + C_n - D_n$ from \cref{eq:scalar-tracking:ek,eq:scalar-tracking:ek:0}.
    By the assumption $p \in [1, \kappa)$, we have $\frac{\overline\kappa}{p}p^j \ge 1$.
    Hence from \eqref{eq:scalar-tracking:ressum} we have that
    $
        \trackingressum
        =
        \frac{\overline\kappa}{p}
        \sum_{j=0}^\infty p^j \trackingres_j
        \ge
        \sum_{j=0}^\infty \trackingres_j.
    $
    Now
    \[
        \begin{split}
        \sum_{n=0}^{k} C_n
        &
        =
        \sum_{n=0}^{k}  \sum_{j=0}^{n-1} p^j \trackingressum\trackingres_{n-j} \nextDistXthisSq[j]
        =
        \trackingressum
        \sum_{j=0}^{k-1} p^j \sum_{n=j+1}^{k} \trackingres_{n-j} \nextDistXthisSq[j]
        \\
        &
        =
        \trackingressum
        \sum_{j=0}^{k-1} p^j \sum_{\ell=0}^{k-1-j} \trackingres_{\ell+1} \nextDistXthisSq[j]
        \le
        \sum_{j=0}^{k-1} p^j \trackingressum^2 \nextDistXthisSq[j]
        \le
        \sum_{j=0}^{k} D_j.
        \end{split}
    \]
    Moreover, using \eqref{eq:scalar-tracking:iota-est-sum} and the sum formula for geometric series, we estimate that $\sum_{n=0}^{k} (A_n+B_n)$ is less than the right-hand side of \eqref{eq:scalar-tracking:ressum:sm}.
\end{proof}

 We use the next lemma in \cref{thm:weaker:erroneous-lipschitz}.

\begin{lemma}
    \label{lemma:scalar-tracking:error-one}
    Suppose \cref{ass:scalar-tracking:main} holds for a $k \in \N$.
    Then, for $\breve e_{1,n}$ given in \eqref{eq:scalar-tracking:ressum}, we have
    $
        \sum_{n=0}^{k-1} \breve e_{1,n}
        \le \Psi_1 + \trackingressum[1]^2\sum_{n=0}^{k-1}\nextDistXthisSq[n].
    $
\end{lemma}

\begin{proof}
    We have $\breve e_{1,n} = e_{1,n} + \trackingressum[1]^2 \nextDistXthisSq[n]$,
    where \eqref{eq:scalar-tracking:ressum:sm} bounds $\sum_{n=0}^{k-1} e_{1,n} \le \Psi_1$.
\end{proof}

\section{Opial's lemma for quasi-Féjer monotonicity}
\label{sec:opial}

Here we prove a generalisation of Opial's lemma \cite{opial1967weak} for quasi-Féjer monotonicity, i.e, Féjer monotonicity with an additive error term. We prove it in normed spaces for Bregman divergences \eqref{eq:fb:bregman}, as they add no extra difficulties.
In an even more general variable-metric framework, a similar result is also proved in \cite[Proposition 2.7]{vannguyen2016variable}.
Our simplified proof follows the outline of that in \cite{clason2020introduction}, and is nearly identical to the one in \cite{tuomov-pointsource}, where the errors took a more specific form.

For the proof, we recall the following deterministic version of the results of \cite{robbins1971convergence}:

\begin{lemma}
    \label{lemma:convergence-inequality}
    Let $\{a_k\}_{k \in \N}$, $\{b_k\}_{k \in \N}$, $\{c_k\}_{k \in \N}$, and $\{d_k\}_{k \in \N}$ be non-negative and
    $
        a_{k+1} \le a_k(1+b_k) + c_k - d_k
    $
    for all $k \in \N$.
    If $\sum_{k=0}^\infty b_k < \infty$ and $\sum_{k=0}^\infty c_k < \infty$, then
    \begin{enumerate*}[label=(\roman*)]
        \item $\lim_{k \to \infty} a_k$ exists and is finite; and
        \item $\sum_{k=0}^\infty d_k < \infty$.
    \end{enumerate*}
\end{lemma}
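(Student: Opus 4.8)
The plan is to reduce the perturbed recursion to the classical monotone–sequence argument by first replacing the multiplicative factor $1+b_k$ with an additive summable error. As a preliminary step I would establish a uniform bound on $\{a_k\}$: iterating $a_{k+1}\le a_k(1+b_k)+c_k$ (discarding the harmless $-d_k\le 0$) and using a discrete Grönwall estimate together with $1+t\le e^t$, one gets
\[
    a_k \le \Bigl(a_0+\sum_{j=0}^\infty c_j\Bigr)\prod_{j=0}^\infty(1+b_j)
    \le \Bigl(a_0+\sum_{j=0}^\infty c_j\Bigr)\exp\Bigl(\sum_{j=0}^\infty b_j\Bigr)
    =: C < \infty,
\]
where finiteness uses the hypotheses $\sum_j b_j<\infty$ and $\sum_j c_j<\infty$.

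With this bound in hand, set $e_k \defeq Cb_k + c_k$, so that $\sum_{k=0}^\infty e_k<\infty$ and the recursion becomes the purely additive $a_{k+1}\le a_k + e_k - d_k$. I would then introduce the tail-corrected sequence $v_k \defeq a_k + \sum_{j=k}^\infty e_j$, which is well defined and non-negative because $\sum_j e_j<\infty$ and $a_k\ge 0$. A one-line computation gives $v_{k+1}\le v_k - d_k \le v_k$, so $\{v_k\}$ is non-increasing and bounded below by $0$; hence $v_\infty \defeq \lim_{k\to\infty} v_k$ exists and is finite. Since the tails $\sum_{j=k}^\infty e_j$ tend to $0$, we conclude $a_k = v_k - \sum_{j=k}^\infty e_j \to v_\infty$, which is part (i).

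For part (ii), summing the inequality $d_k \le v_k - v_{k+1}$ over $k=0,\dots,N-1$ telescopes to $\sum_{k=0}^{N-1} d_k \le v_0 - v_N \le v_0 - v_\infty < \infty$; letting $N\to\infty$ yields $\sum_{k=0}^\infty d_k<\infty$. (An essentially equivalent route is to rescale by $\beta_k \defeq \prod_{j<k}(1+b_j)$, which converges in $[1,\infty)$, reducing to the case $b_k\equiv 0$ directly; I would use whichever reads more cleanly.)

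There is no genuine obstacle in this argument; the only point requiring care is that the multiplicative perturbation $b_k$ must be linearised into an additive one \emph{before} applying the monotone-sequence trick, which is why the a priori bound $a_k\le C$ has to be derived first. Everything else is standard Robbins–Siegmund / quasi-Fejér bookkeeping.
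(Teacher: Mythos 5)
Your proof is correct, and in fact the paper gives no proof of this lemma at all: it is stated as a known deterministic specialisation of Robbins–Siegmund \cite{robbins1971convergence} and cited rather than proved. So there is nothing in the paper to compare against; your argument simply supplies the missing details. The two ingredients you use—the a priori bound $a_k \le C$ obtained by discrete Grönwall with $1+t\le e^t$, and the tail-corrected Lyapunov sequence $v_k = a_k + \sum_{j\ge k}e_j$ which is non-increasing—are exactly what make the classical argument go through, and the telescoping for $\sum_k d_k$ is clean. The alternative you mention (rescaling by $\beta_k=\prod_{j<k}(1+b_j)$, which converges to a finite limit $\beta_\infty\in[1,\infty)$, to reduce at once to the $b_k\equiv 0$ case) is the more traditional route in the literature; either is fine, and you correctly identify the one subtle point, namely that the multiplicative perturbation must be absorbed (or linearised via the a priori bound) before the monotone-sequence trick applies.
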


\begin{lemma}
    \label{lemma:opial}
    Let either $X$ be the dual space of a corresponding separable normed space $X_*$, or, alternatively, let $X$ be reflexive.
    Also let $M : X \to \R$ be convex, proper, and Gâteaux differentiable with $M': X \to X_*$ weak-$*$-to-weak continuous.
    Finally, let $\hat X \subset X$ be non-empty and $\{e_k(\bar x)\}_{k \in \N} \in \R$ for all $\bar x \in \hat X$. If
    \begin{enumerate}[label=(\roman*)]
        \item\label{item:opial-limit} all weak-$*$ limit points of $\{\thisx\}_{k \in \N}$ belong $\hat X$;
        \item\label{item:opial-nonincreasing} $\bregmann_M(\nextx, \bar x) \le \bregmann_M(\thisx, \bar x) + e_k(\bar x)$ for some $e_k(\bar x) \ge 0$ for all $\bar x \in \hat X$ and $k \in \N$; and
        \item\label{item:opial-epsilon-bound} $\sum_{k=0}^\infty e_k(\bar x) <\infty$ for all $\bar x \in \hat X$;
    \end{enumerate}
    then all weak-$*$ limit points of $\{\this x\}_{k \in \N}$ satisfy $\hat x, \bar x \in \hat X$ and
    \begin{equation}
        \label{eq:opial:mprime-zero}
        \dualprod{M'(\hat x) - M'(\bar x)}{\hat x - \bar x} = 0.
    \end{equation}
    If $\{\thisx\}_{k \in \N} \subset X$ is bounded, then such a limit point exists.
    If, in addition to all the previous assumptions, \eqref{eq:opial:mprime-zero} implies $\hat x = \bar x$ (such as when $M$ is strictly monotone), then $\thisx \weaktostar \hat x$ weakly-$*$ in $X$ for some $\hat x \in \hat X$.
\end{lemma}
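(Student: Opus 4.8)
The plan is to run the classical Opial argument with $\tfrac12\norm{\cdot-\bar x}^2$ replaced throughout by the Bregman divergence $\bregmann_M(\cdot,\bar x)$, which is non-negative because $M$ is convex and Gâteaux differentiable. \textbf{First}, I would fix $\bar x\in\hat X$ and apply \cref{lemma:convergence-inequality} to $a_k\defeq\bregmann_M(\thisx,\bar x)$ with $b_k=d_k=0$ and $c_k=e_k(\bar x)$: hypotheses \ref{item:opial-nonincreasing} and \ref{item:opial-epsilon-bound} furnish exactly $a_{k+1}\le a_k+e_k(\bar x)$ together with $\sum_k e_k(\bar x)<\infty$, so $\ell(\bar x)\defeq\lim_k\bregmann_M(\thisx,\bar x)$ exists and is finite for every $\bar x\in\hat X$.

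\textbf{Second}, I would record the three-point identity, immediate from the definition \eqref{eq:fb:bregman} with $\omega=M'(\freevar)$: for all $a,b,c\in X$,
\[
    \bregmann_M(a,c)=\bregmann_M(a,b)+\bregmann_M(b,c)+\dualprod{M'(a)-M'(b)}{b-c}.
\]
Let $\hat x$ and $\bar x$ be two weak-$*$ limit points of $\{\thisx\}$; by \ref{item:opial-limit} both lie in $\hat X$, so $\ell(\hat x)$ and $\ell(\bar x)$ are defined by the first step. Picking a subsequence $x^{k_j}\weaktostar\bar x$ and invoking the identity with $a=x^{k_j}$, $b=\bar x$, $c=\hat x$, I would pass to the limit: the left side tends to $\ell(\hat x)$, the first right-hand term to $\ell(\bar x)$, the term $\bregmann_M(\bar x,\hat x)$ is constant, and the cross term $\dualprod{M'(x^{k_j})-M'(\bar x)}{\bar x-\hat x}$ tends to $0$ because $M'$ is weak-$*$-to-weak continuous and $\bar x-\hat x$ is a \emph{fixed} element of $X=(X_*)^*$, hence pairs continuously with the weakly convergent sequence $M'(x^{k_j})$ in $X_*$. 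This yields $\ell(\hat x)=\ell(\bar x)+\bregmann_M(\bar x,\hat x)$; interchanging the roles of $\hat x$ and $\bar x$ along a subsequence converging weak-$*$ to $\hat x$ gives $\ell(\bar x)=\ell(\hat x)+\bregmann_M(\hat x,\bar x)$. Adding the two and using $\bregmann_M(\bar x,\hat x)+\bregmann_M(\hat x,\bar x)=\dualprod{M'(\hat x)-M'(\bar x)}{\hat x-\bar x}$ produces \eqref{eq:opial:mprime-zero}.

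\textbf{Third}, for the existence claim, if $\{\thisx\}$ is bounded then in the dual-space case the separability of $X_*$ makes norm-bounded subsets of $X$ weak-$*$ sequentially compact (sequential Banach--Alaoglu), while in the reflexive case the analogous statement is Eberlein--\v{S}mulian; either way a weak-$*$ convergent subsequence, hence a weak-$*$ limit point, exists, and it lies in $\hat X$ by \ref{item:opial-limit}. Finally, if \eqref{eq:opial:mprime-zero} forces $\hat x=\bar x$, then $\{\thisx\}$ has a single weak-$*$ limit point $\hat x\in\hat X$; since every subsequence of $\{\thisx\}$ then has a further subsequence converging weak-$*$ to $\hat x$, a routine subsequence argument upgrades this to $\thisx\weaktostar\hat x$.

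I expect the \textbf{main obstacle} to be the bookkeeping in the second step: the Bregman divergence is not symmetric, so one must use the precise form of the three-point identity and track carefully which object lives in $X_*$ and which in $X=(X_*)^*$, so that weak-$*$-to-weak continuity of $M'$ is applied against a genuinely fixed vector and the symmetrisation leaves no residual term. The remaining steps are routine given \cref{lemma:convergence-inequality} and standard weak-$*$ compactness.
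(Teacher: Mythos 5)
Your proposal is correct and follows essentially the same line as the paper's proof: \cref{lemma:convergence-inequality} to show $\bregmann_M(\thisx,\bar x)$ converges for every weak-$*$ limit point $\bar x \in \hat X$, the three-point identity for Bregman divergences, weak-$*$-to-weak continuity of $M'$ against a fixed element of $X=(X_*)^*$, and the usual sequential Banach--Alaoglu / Eberlein--\v{S}mulian plus subsequence--subsequence closing. The only difference is in the bookkeeping for the middle step: the paper applies the three-point identity once to isolate the cross term $\dualprod{M'(x^k)-M'(\hat x)}{\bar x-\hat x}$, notes its limit $c$ exists, and evaluates along two subsequences to obtain $c=\dualprod{M'(\bar x)-M'(\hat x)}{\bar x-\hat x}$ and $c=0$ simultaneously; you instead apply the identity twice (once with each limit point in the middle slot), obtain $\ell(\hat x)=\ell(\bar x)+\bregmann_M(\bar x,\hat x)$ and its mirror, and symmetrise. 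Your variant has the small bonus of showing $\bregmann_M(\bar x,\hat x)=\bregmann_M(\hat x,\bar x)=0$ individually (both are non-negative and sum to zero), though that is not needed for the stated conclusion.
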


\begin{proof}
    Let $\bar x$ and $\hat x$ be weak-$*$ limit points of $\{\thisx\}_{k \in \N}$.
    Since Bregman divergences $\bregmann_M \ge 0$ for convex $M$, the conditions \cref{item:opial-epsilon-bound,item:opial-nonincreasing} establish the assumptions of \cref{lemma:convergence-inequality} for $a_k = \bregmann_M(\thisx; \bar x)$, $b_k=0$, $c_k=e_k(\bar x)$, and $d_k=0$. It follows that $\{\bregmann_M(x^k; \bar x)\}_{k\in\N}$ is convergent.
    Likewise we establish that $\{\bregmann_M(x^k; \hat x)\}_{k\in\N}$ is convergent.
    Therefore, by the obvious three-point identity for Bregman divergences (see, e.g., \cite{tuomov-firstorder}),
    \[
         \dualprod{M'(x^k)-M'(\hat x)}{\bar x - \hat x}
            = \bregmann_M(x^k; \hat x) - \bregmann_M(x^k; \bar x) + \bregmann_M(\hat x; \bar x)
            \to c \in \R.
    \]
    Since $\bar x$ and $\hat x$ are a weak-$*$ limit point, there exist subsequences $\{x^{k_n}\}_{n\in \N}$ and $\{x^{k_m}\}_{m\in \N}$ with $x^{k_n}\weakto \bar x$ and $x^{k_m}\weakto \hat x$.
    By the weak-$*$-to-weak continuity of $M': X \to X_*$, \eqref{eq:opial:mprime-zero} follows from
    \[
        \dualprod{M'(\bar x)-M'(\hat x)}{\bar x - \hat x}
        =
        \lim_{n \to \infty}
        \dualprod{M'(x^{k_n})-M'(\hat x)}{\bar x - \hat x}
        =
        c
        =
        \lim_{m \to \infty}
        \dualprod{M'(x^{k_m})-M'(\hat x)}{\bar x - \hat x}
        =
        0.
    \]

    If $\{\thisx\}_{k \in \N}$ is bounded, and $X$ is the dual space of some separable normed space $X_*$, it contains a \mbox{weakly-$*$} convergent subsequence by the Banach--Alaoglu theorem, so a limit point exists as claimed. If $X$ is reflexive, the Eberlein–\u{S}mulyan theorem establishes the same result.
    Hence, if \eqref{eq:opial:mprime-zero} implies $\bar x =\hat x$, then every convergent subsequence of  $\{\thisx\}_{k \in \N}$ has the same weak limit.
    It lies in $\hat X$ by \cref{item:opial-limit}.
    The final claim now follows from a standard subsequence--subsequence argument: Assume to the contrary that there exists a subsequence of $\{x^k\}_{k\in\N}$ not convergent to $\hat x$. Then the above argument provides a further subsequence converging to $\hat x$.
    This contradicts the fact that any subsequence of a convergent sequence converges to the same limit.
\end{proof}

\bibliographystyle{jnsao}

\end{document}